
\documentclass[12pt,twoside]{amsart}
\usepackage[margin=3cm]{geometry}
\usepackage[colorlinks=false]{hyperref}
\usepackage[english]{babel}
\usepackage{graphicx,titling}
\usepackage{float}
\usepackage{amsmath,amsfonts,amssymb,amsthm}
\usepackage{lipsum}
\usepackage[T1]{fontenc}
\usepackage{fourier}
\usepackage{color}
\usepackage[latin1]{inputenc}
\usepackage{esint}
\usepackage{caption}
\usepackage{ccicons}

\makeatletter
\def\blfootnote{\xdef\@thefnmark{}\@footnotetext}
\makeatother

\newcommand\ccnote{
    \blfootnote{\ccLogo\, \ccAttribution\,\, Licensed under a Creative Commons Attribution License (CC-BY).}
}

\usepackage[export]{adjustbox}
\numberwithin{equation}{section}
\usepackage{setspace}\setstretch{1.05}
\renewcommand{\le}{\leqslant}

\renewcommand{\ge}{\geqslant}

\renewcommand{\mathbb}{\varmathbb}
\newcommand{\ud}{\,\mathrm{d}}
\newcommand{\p}{\ensuremath{\partial}}
\newcommand{\n}{\ensuremath{\nonumber}}
\newcommand{\eps}{\ensuremath{\varepsilon}}
\usepackage{fancyhdr}
\pagestyle{fancy}
\fancyhf{}

\newtheorem{theorem}{Theorem}[section]
\newtheorem{lemma}[theorem]{Lemma}
\newtheorem{corollary}[theorem]{Corollary}
\newtheorem{proposition}[theorem]{Proposition}
\newtheorem{definition}[theorem]{Definition}
\newtheorem{remark}[theorem]{Remark}
\fancyhead[LE,RO]{\thepage}

\fancyhead[RE]{S. Iyer \& N. Masmoudi}
\fancyhead[LO]{Stationary Boundary Layer Expansions}


\address{Sameer Iyer, University of California, Davis, Department of Mathematics, 1 Shields Avenue, Davis, CA 95616.}
\email{sameer@math.ucdavis.edu}
\medskip
\address{Nader Masmoudi,  NYUAD Research Institute, New York University Abu Dhabi, PO Box 129188, Abu Dhabi, United Arab Emirates; Courant Institute of Mathematical Sciences, New York University, 251 Mercer Street, New York, NY 10012, USA.} 
\email{masmoudi@cims.nyu.edu}

\begin{document}

\thispagestyle{empty}

\begin{minipage}{0.28\textwidth}
\begin{figure}[H]
\includegraphics[width=2.5cm,height=2.5cm,left]{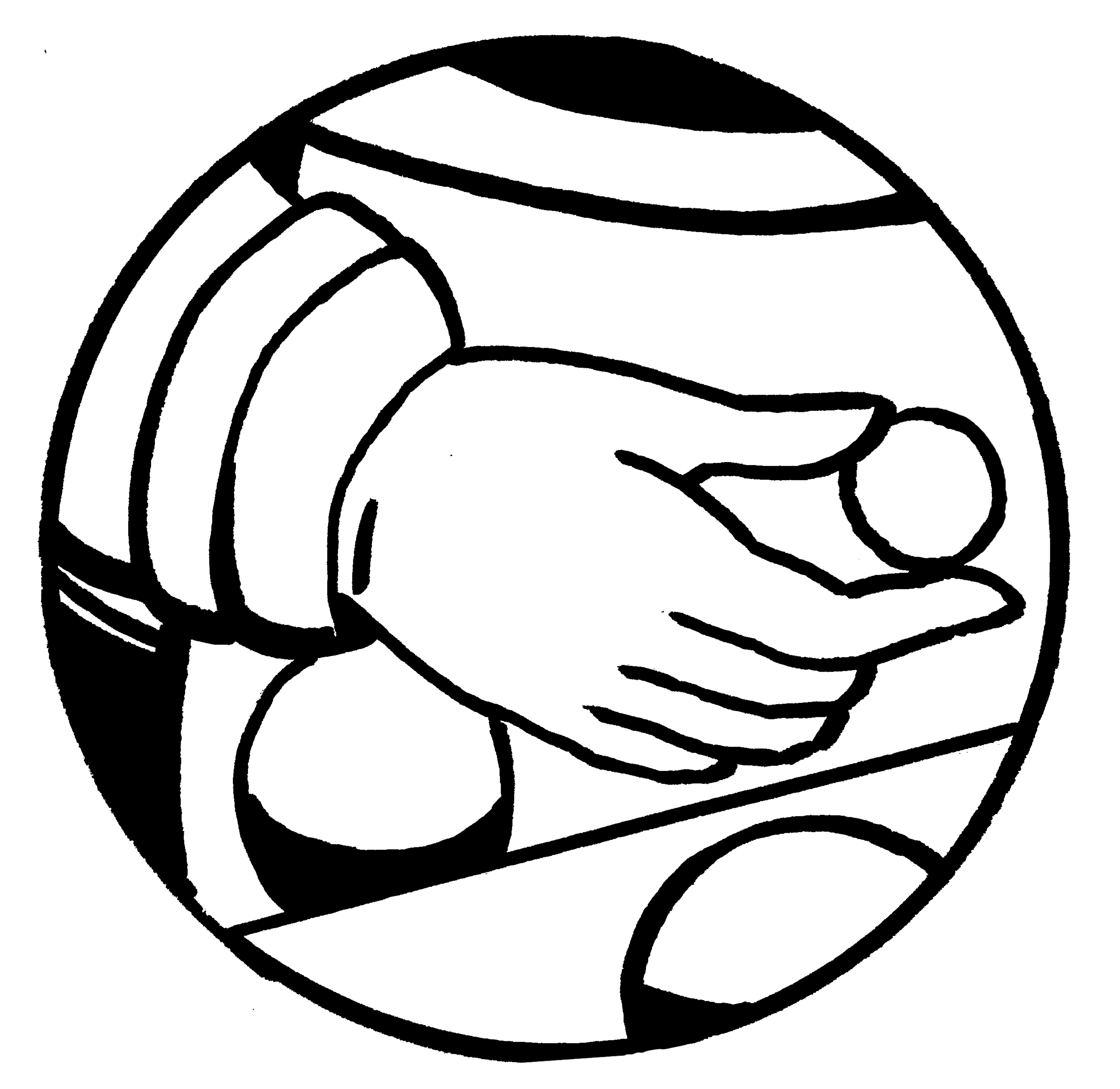}
\end{figure}
\end{minipage}
\begin{minipage}{0.7\textwidth} 
\begin{flushright}
Ars Inveniendi Analytica (2021), Paper No. 6, 47 pp.
\\
DOI 10.15781/64dc-7z92
\end{flushright}
\end{minipage}

\ccnote

\vspace{1cm}


\begin{center}
\begin{huge}
\textit{Boundary Layer Expansions for the Stationary Navier-Stokes Equations}


\end{huge}
\end{center}

\vspace{1cm}


\begin{minipage}[t]{.28\textwidth}
\begin{center}
{\large{\bf{Sameer Iyer}}} \\
\vskip0.15cm
\footnotesize{University of California, Davis}
\end{center}
\end{minipage}
\hfill
\noindent
\begin{minipage}[t]{.38\textwidth}
\begin{center}
{\large{\bf{Nader Masmoudi}}} \\
\vskip0.15cm
\footnotesize{New York University Abu Dhabi}
\\
\footnotesize{$\&\,\,\,$ New York University}
\end{center}
\end{minipage}

\vspace{1cm}


\begin{center}
\noindent \em{Communicated by Vlad Vicol}
\end{center}
\vspace{1cm}


\noindent \textbf{Abstract.} \textit{This is the first part of a two paper sequence in which we prove the global-in-$x$ stability of the classical Prandtl boundary layer for the 2D, stationary Navier-Stokes equations. In this part, we provide a construction of an approximate Navier-Stokes solution, obtained by a classical Euler-Prandtl asymptotic expansion. We develop here sharp decay estimates on these quantities. Of independent interest, we establish \textit{without} using the classical von-Mise change of coordinates, proofs of global in x regularity of the Prandtl system. The results of this paper are used in the second part of this sequence, \cite{IM2} to prove the asymptotic stability of the boundary layer as $\eps \rightarrow 0$ and $x \rightarrow \infty$.}
\vskip0.3cm

\vspace{0.5cm}


\section{Introduction}

We consider the Navier-Stokes (NS) equations posed on the domain $\mathcal{Q} := (0, \infty) \times (0, \infty)$: 
\begin{align} \label{NS:1}
&u^\eps u^\eps_x + v^\eps u^\eps_Y + P^\eps_x = \eps \Delta u^\eps, \\ \label{NS:2}
&u^\eps v^\eps_x + v^\eps v^\eps_Y + P^\eps_Y = \eps \Delta v^\eps, \\ \label{NS:3}
&u^\eps_x + v^\eps_Y = 0,
\end{align}
We are taking the following boundary conditions in the vertical direction
\begin{align} \label{BC:vert:intro}
&[u^\eps, v^\eps]|_{Y = 0} = [0,0], \qquad [u^\eps(x,Y), v^\eps(x,Y)] \xrightarrow{Y \rightarrow \infty} [u_E(x,\infty), v_E(x,\infty)].
\end{align}
which coincide with the classical no-slip boundary condition at $\{Y =0\}$ and the Euler matching condition as $Y \uparrow \infty$. We now fix the vector field 
\begin{align}
[u_E, v_E] := [1, 0], \qquad P_E  = 0
\end{align}
as a solution to the steady, Euler equations ($\eps = 0$ in \eqref{NS:1} - \eqref{NS:3}), upon which the matching condition above reads $[u^\eps, v^\eps] \xrightarrow{Y \rightarrow \infty} [1, 0]$.  

Generically, there is a mismatch at $Y = 0$ between the boundary condition \eqref{BC:vert:intro} and that of an inviscid Eulerian fluid, which typically satisfies the no-penetration condition, $v^E|_{Y = 0} = 0$. Given this, it is not possible to demand convergence of the type $[u^\eps, v^\eps] \rightarrow [1, 0]$ as $\eps \rightarrow \infty$ in suitably strong norms, for instance in the $L^\infty$ sense. To rectify this mismatch, Ludwig Prandtl proposed in his seminal 1904 paper, \cite{Prandtl}, that one needs to \textit{modify} the limit of $[u^\eps, v^\eps]$ by adding a corrector term to $[1, 0]$, which is effectively supported in a thin layer of size $\sqrt{\eps}$ near $\{Y = 0\}$. Mathematically, this amounts to proposing an asymptotic expansion of the type 
\begin{align} \label{ansatz:1:1}
u^\eps(x, Y) = 1 + u^0_p(x, \frac{Y}{\sqrt{\eps}}) + O(\sqrt{\eps}) = \bar{u}^0_p(x, \frac{Y}{\sqrt{\eps}}) + O(\sqrt{\eps}), 
\end{align} 
where the rescaling $\frac{Y}{\sqrt{\eps}}$ ensures that the corrector, $u^0_p$, is supported effectively in a strip of size $\sqrt{\eps}$. The quantity $\bar{u}^0_p$ is classically known as the Prandtl boundary layer, whereas the $O(\sqrt{\eps})$ term will be referred to in our paper as ``the remainder". Motivated by this ansatz, we introduce the Prandtl rescaling 
\begin{align}
y := \frac{Y}{\sqrt{\eps}}. 
\end{align}
We now rescale the solutions via 
\begin{align}
U^\eps(x, y) := u^\eps(x, Y), \qquad V^\eps(x, y) := \frac{v^\eps(x, Y)}{\sqrt{\eps}}
\end{align}
which satisfy the following system 
\begin{align} \label{eq:NS:1}
&U^\eps U^\eps_x + V^\eps U^\eps_y + P^\eps_x = \Delta_\eps U^\eps, \\  \label{eq:NS:2}
&U^\eps V^\eps_x + V^\eps V^\eps_y + \frac{P^\eps_y}{\eps} = \Delta_\eps V^\eps, \\  \label{eq:NS:3}
&U^\eps_x + V^\eps_y = 0. 
\end{align}

\noindent Above, we have denoted the scaled Laplacian operator, $\Delta_\eps := \p_y^2 + \eps \p_x^2$. 

We now expand the rescaled solution in a more detailed manner compared to \eqref{ansatz:1:1}, as 
\begin{align}
\begin{aligned} \label{exp:base}
&U^\eps := 1 + u^0_p + \sum_{i = 1}^{N_{1}} \eps^{\frac{i}{2}} (u^i_E + u^i_P) + \eps^{\frac{N_2}{2}} u =: \bar{u} +  \eps^{\frac{N_2}{2}} u \\
&V^\eps := v^0_p + v^1_E + \sum_{i = 1}^{N_{1}-1} \eps^{\frac i 2} (v^i_P + v^{i+1}_E) + \eps^{\frac{N_{1}}{2}} v^{N_{1}}_p +  \eps^{\frac{N_2}{2}} v =: \bar{v} + \eps^{\frac{N_2}{2}} v, \\
&P^\eps := \sum_{i = 0}^{N_1+1} \eps^{\frac{i}{2}} P^i_p +  \sum_{i = 1}^{N_{1}} \eps^{\frac{i}{2}} P^i_E + \eps^{\frac{N_2}{2}} P.
\end{aligned}
\end{align}

\noindent Above, 
\begin{align}
[u^0_E, v^0_E] := [1, 0], \qquad [u^i_p, v^i_p] = [u^i_p(x, y), v^i_p(x, y)], \qquad [u^i_E, v^i_E] = [u^i_E(x, Y), v^i_E(x, Y)], 
\end{align}
and the expansion parameters $N_{1}, N_2$ will be specified in Theorem \ref{thm:approx} for the sake of precision. We note that these are not optimal choices of these parameters, but we chose them large for simplicity. Certainly, it will be possible to bring these numbers significantly smaller.  

Motivated by the leading order of the right-hand side of the first two equations of \eqref{exp:base}, we make the following definitions:
\begin{align}
\bar{u}^0_p := 1 + u^0_p, \qquad \bar{v}^0_p := - \int_0^y \p_x \bar{u}^0_p
\end{align}
The divergence-free vector field $[\bar{u}^0_p, \bar{v}^0_p]$ solve the classical stationary Prandtl system, which we will now introduce. On the other hand, the divergence-free vector field $[u^0_p, v^0_p]$ are homogenized so as to decay as $y \rightarrow \infty$. The effectiveness of the ansatz \eqref{exp:base}, and the crux of Prandtl's revolutionary idea, is that the leading order term $\bar{u}^0_p$ (and its divergence-free counterpart, $\bar{v}^0_p$) satisfy a much simpler equation than the full Navier-Stokes system, known as the Prandtl system, 
\begin{align}  \label{BL:0:intro:intro}
&\bar{u}^0_p \p_x \bar{u}^0_p + \bar{v}^0_p \p_y \bar{u}^0_p - \p_y^{2} \bar{u}^0_p + P^{0}_{px} = 0, \qquad P^0_{py} = 0, \qquad \p_x \bar{u}^0_p + \p_y \bar{v}^0_p = 0,  
\end{align}
which are supplemented with the boundary conditions
\begin{align} \label{BL:1:intro:intro}
&\bar{u}^0_p|_{x = 0} = \bar{U}^0_p(y), \qquad \bar{u}^0_p|_{y = 0} = 0, \qquad \bar{u}^0_p|_{y = \infty} = 1, \qquad \bar{v}^0_p = - \int_0^y \p_x \bar{u}^0_p. 
\end{align}
This system is simpler than \eqref{eq:NS:1} - \eqref{eq:NS:3} in several senses. First, due to the condition $P^0_{py} = 0$, we obtain that the pressure is constant in $y$ (and then in $x$ due to the Bernoulli's equation), and hence \eqref{BL:0:intro:intro} is really a scalar equation. 

In addition to this, by temporarily omitting the transport term $\bar{v}^0_p \p_y \bar{u}^0_p$, one can make the formal identification that $\bar{u}^0_p \p_x \approx \p_{yy}$, which indicates that \eqref{BL:1:intro:intro} is really a \textit{degenerate, parabolic} equation, which is in stark contrast to the elliptic system \eqref{eq:NS:1} - \eqref{eq:NS:3}. From this perspective, $x$ acts as a time-like variable, whereas $y$ acts as a space-like variable. We thus treat \eqref{BL:0:intro:intro} - \eqref{BL:1:intro:intro} as one would a typical Cauchy problem. Indeed, one can ask questions of local (in $x$) wellposedness, global (in $x$) wellposedness, finite-$x$ singularity formation, decay and asymptotics, etc,... This perspective will be emphasized more in Section \ref{subsection:asy:x}.

The main purposes of this work is to

\begin{itemize}
\item[(1)] Provide a rigorous construction of each of the terms in the approximate solution, $[\bar{u}, \bar{v}]$ from \eqref{exp:base} with precise estimates on the error created by such an approximation: 
\begin{align}
\begin{aligned} \label{forcing:remainder}
F_R := &(U^\eps U^\eps_x + V^\eps U^\eps_y + P^\eps_x - \Delta_\eps U^\eps ) - (\bar{u} \bar{u}_x + \bar{v} \bar{u}_y + \bar{P}_x - \Delta_\eps \bar{u}) \\
G_R := &(U^\eps V^\eps_x + V^\eps V^\eps_y + \frac{P^\eps_y}{\eps} - \Delta_\eps V^\eps ) -  (\bar{u} \bar{v}_x + \bar{v} \bar{v}_y + \frac{\bar{P}_y}{\eps} - \Delta_\eps \bar{v}). 
\end{aligned}
\end{align}
\item[(2)] Provide a proof of global-in-$x$ regularity and decay for the nonlinear and linearized Prandtl equations in physical $(x, y)$ coordinates and in particular \textit{does not rely on the von-Mise change of coordinates}.  
\end{itemize}

The role that $x$ plays as a ``time" variable will be discussed from a mathematical standpoint in Section \ref{subsection:asy:x}. The present paper can be regarded as a global-in-$x$ version of the constructions obtained in \cite{GI2}, all of which required $x << 1$ (local in $x$). Another purpose of this article is to provide \textit{in physical $(x, y)$ coordinates}, a proof of global regularity of the nonlinear and linearized Prandtl equations, as compared to \cite{Serrin}, \cite{IyerBlasius}, both of whom relied on the ``von-Mise" transform, which has little hope to be useful in the Navier-Stokes setting. 

We draw a parallel to the unsteady setting in which  \cite{MW} provided an energy-estimate based proof of local in time existence to the Prandtl equations, \textit{without} using the Crocco-transform. The $(x, y)$ coordinate system (and energy estimates in this coordinate system) have proven to translate most effectively to the Navier-Stokes setting than these classical transforms (von-Mise transform, Crocco transform) that are particular to the simplified Prandtl setting. Indeed, the construction of the approximate solution, $[\bar{u}, \bar{v}]$, that we provide in this article is used in the companion paper, \cite{IM2}, in which we prove the stability of $[\bar{u}, \bar{v}]$.

\subsection{Asymptotic Expansion and Datum}

First, we note that, given the expansion \eqref{exp:base}, we enforce the vertical boundary conditions for $i = 0,...N_1$, $j = 1, ... N_1$,
\begin{align}
&u^i_p(x, 0) = - u^i_E(x, 0),  & &v^j_E(x, 0) = - v^{j-1}_p(x, 0), & &[u, v]|_{y = 0} = 0, \\
&v^i_p(x, \infty) = u^i_p(x, \infty) = 0, & & u^j_E(x, \infty) = v^j_E(x, \infty) = 0, & &[u, v]|_{y = \infty} = 0. 
\end{align}
which ensure the no-slip boundary condition for $[U^\eps, V^\eps]$ at $y = 0, \infty$. As we do not include an Euler field to cancel out last $v^{N_1}_p$, we need to also enforce the condition $v^{N_1}_p|_{y = 0} = 0$, which is a particularity for the $N_1$'th boundary layer. 

The side $\{x =0\}$ has a distinguished role in this setup as being where ``in-flow" datum is prescribed. This datum is prescribed ``at the level of the expansion", in the sense described below (see the discussion surrounding equations \eqref{datum:given}). Moreover, as $x \rightarrow \infty$, one expects the persistence of just one quantity from \eqref{exp:base}, which is the leading order boundary layer, $[\bar{u}^0_p, \bar{v}^0_p]$, and the remaining terms from \eqref{exp:base} are expected to decay in $x$. This decay will be established rigorously in our main result. 

We shall now describe the datum that we take for our setting.  First, we take the outer Euler profile to be 
\begin{align} \label{choice:Euler}
[u^0_E, v^0_E, P^0_E] := [1, 0, 0]
\end{align}
as given. We have selected \eqref{choice:Euler} as the simplest shear flow with which to work. Our analysis can be extended with relatively small (but cumbersome) modifications to general Euler shear flows of the form 
\begin{align}
[u^0_E, v^0_E, P^0_E] := [u^0_E(Y), 0, 0],
\end{align}
under mild assumptions on the shear profile $u^0_E(Y)$. 

Apart from prescribing the outer Euler profile, we also get to pick ``Initial datum", that is at $\{x = 0\}$ of various terms in the expansion \eqref{exp:base}. Specifically, the prescribed initial datum comes in the form of the functions: 
\begin{align} \label{datum:given}
u^i_P|_{x = 0} =: U^i_P(y), \qquad v^j_E|_{x = 0} =: V^j_E(Y),
\end{align}
for $i = 0,...,N_1$, and $j = 1,..,N_1$. Note that we do not get to prescribe, at $\{x = 0\}$, all of the terms appearing in \eqref{exp:base}. On the one hand, to construct $[u^i_p, v^i_p]$, we use that   $u^i_p$ obeys a degenerate parabolic equation, with $x$ occupying the time-like variable and $y$ the space-like variable and that $v^i_p$ can be  recovered from $u^i_p$ via the divergence-free condition. Therefore, only $u^i_p|_{x = 0}$ is necessary to determine these quantities.  

It is useful at this point to introduce the systems satisfied by the ``leading order" boundary layer ($[\bar{u}^0_p, \bar{v}^0_p]$) and the ``intermediate" boundary layers, $[u^i_p, v^i_p]$, for $i = 1, \dots, N_1$. We note that these systems are derived rigorously in Appendix \ref{app:A} by substituting the expansion \eqref{exp:base} into the system \eqref{eq:NS:1} -- \eqref{eq:NS:3} and systematically collecting terms contributing to each order in $\sqrt{\eps}$. 

First, as we have discussed in \eqref{BL:0:intro:intro} -- \eqref{BL:1:intro:intro}, the vector-field $[\bar{u}^0_p, \bar{v}^0_p]$ satisfies the pressure-free Prandtl equations, which read: 
\begin{subequations}
\begin{align}
&\bar{u}^0_p \p_x \bar{u}^0_p + \bar{v}^0_p \p_y \bar{u}^0_p - \p_y^2 \bar{u}^0_p = 0, \qquad (x, y) \in (0, \infty) \times (0, \infty) \\
&\bar{v}^0_p = - \int_0^y \p_x \bar{u}^0_p, \\
&\bar{u}^0_p|_{x = 0} = \bar{U}^0_p(y) := 1 + U^0_P(y), \qquad \bar{u}^0_p|_{y = 0} = 0, \qquad \bar{u}^0_p|_{y = \infty} = 1. 
\end{align}
\end{subequations}
In the main theorem below, Theorem \ref{thm:approx}, we will state a smallness condition on the perturbation of the initial datum from a particular self-similar profile, the Blasius profile $\bar{u}_\ast$, ( introduced below in \eqref{Blasius:1} -- \eqref{Blasius:3}). Indeed, the object we will ultimately analyze here will be the perturbation: 
\begin{align}
\tilde{u}^0_p := \frac{\bar{u}^0_p - \bar{u}_\ast}{\delta_{\ast}}, \qquad \tilde{v}^0_p := \frac{\bar{v}^0_p - \bar{v}_\ast}{\delta_{\ast}}. 
\end{align}
Above, $0 < \delta_{\ast} << 1$ is a small parameter introduced by hypothesis \eqref{near:blasius} in the main result below. The quantities above satisfy the equation 
\begin{subequations}
\begin{align} \label{Pr:leading:1:1:1}
&\bar{u}_\ast \p_x \tilde{u}^0_p +\tilde{u}^0_p \p_x \bar{u}_{\ast}  + \bar{v}_\ast \p_y \tilde{u}^0_p + \tilde{v}^0_p \p_y \bar{u}_\ast - \p_y^2 \tilde{u}^0_p = \delta_{\ast} Q(\tilde{u}^0_p, \tilde{v}^0_p), \\ \label{Pr:leading:1:1:2}
&\tilde{v}^0_p = - \int_0^y \p_x \tilde{u}^0_p, \\ \label{Pr:leading:1:1:3}
&\tilde{u}^0_p|_{x = 0} = \frac{1}{\delta_{\ast}} (\bar{U}^0_P(y) - \bar{u}_\ast(0, y)), \qquad \tilde{u}^0_p|_{y = 0} = 0, \qquad \tilde{u}^0_p|_{y = \infty} = 0. 
\end{align}
 \end{subequations}
 Above, the forcing $Q(\tilde{u}^0_p, \tilde{v}^0_p)$ contain quadratic terms, and will be perturbative using the smallness of $\delta_\ast << 1$ in the main theorem below. 
 
The intermediate boundary layers, $[u^i_p, v^i_p]$, $i = 1, \dots N_1$, satisfy the linearized Prandtl system (with contributed forcing):
\begin{subequations}
\begin{align} \label{eq:lin:Pr:HYU}
&\bar{u}^0_p \p_x u^i_p + u^i_p  \bar{u}^0_{px} + \bar{v}^0_p \p_y u^i_p + \bar{v}^i_p \p_y \bar{u}^0_p - \p_y^2 u^i_p = F^{(i)}_p, \\ \label{abs:intro:2}
&v^i_p =  \int_y^\infty \p_x u^i_p, \qquad \bar{v}^i_p = v^i_p - v^i_p|_{y = 0}, \\ \label{abs:intro:3}
&u^i_p|_{x = 0} = U^i_P(y), \qquad u^i_p|_{y = 0} = - u^i_E|_{y = 0}, \qquad u^i_p|_{y = \infty} = 0. 
\end{align}
\end{subequations}
 We note that, by our convention, $u^i_p$ are the boundary layer \textit{correctors}. This means that they vanish as $y \rightarrow \infty$, and cancel out the $i$'th Euler contribution at $\{y = 0\}$. Above, the forcing term $F^{(i)}_p$ is due to contributions of size $O(\eps^{\frac i2})$ of lower order boundary layers ($[u^j_p, v^j_p]$,  $j =0, \dots i - 1$) which need to be cancelled out through the introduction of $[u^i_p, v^i_p]$.

On the other hand,  to construct  the Euler profiles $[u^i_E, v^i_E]$ for $i = 1,..,N_1$, we use an  elliptic problem for $v^i_E$ (in the special case of \eqref{choice:Euler}, it is in fact $\Delta v^i_E = 0$). As such, we prescribe the datum for $v^i_E$, as is displayed in \eqref{datum:given}, and then recover $u^i_E$ via the divergence-free condition. Therefore, only $v^i_E|_{x = 0}$ is necessary to determine these quantities.

\subsection{Asymptotics as $x \rightarrow \infty$} \label{subsection:asy:x}

We will now discuss more precisely the role of the $x$-variable, specifically emphasizing the role that $x$ plays as a ``time-like" variable, controlling the ``evolution" of the fluid. The importance of studying the large $x$ behavior of both the Prandtl equations and the Navier-Stokes equations is not just mathematical (in analogy with proving global wellposedness/ decay versus finite-$x$ blowup), but is also of importance physically due to the possibility of boundary layer separation, which is a large $x$ phenomena (which was noted by Prandtl himself in his original 1904 paper). 

We shall discuss first the large-$x$ asymptotics at the level of the Prandtl equations, \eqref{BL:0:intro:intro} - \eqref{BL:1:intro:intro}, which govern $[\bar{u}^0_p, \bar{v}^0_p]$. It turns out that there are two large-$x$ regimes for $[\bar{u}^0_p, \bar{v}^0_p]$, depending on the sign of the Euler pressure gradient: 

\begin{itemize}
\item[(1)] Favorable pressure gradient, $\p_x P^E \le 0$: $[\bar{u}^0_p, \bar{v}^0_p]$ exists globally in $x$, and becomes asymptotically self-similar, 

\item[(2)] Unfavorable pressure gradient, $\p_x P^E > 0$, $[\bar{u}^0_p, \bar{v}^0_p]$ may form a finite-$x$ singularity, known as ``separation".
\end{itemize}
In this work, our choice of $[1, 0]$ for the outer Euler flow guarantees that we are in setting (1), that of a favorable pressure gradient. 

This dichotomy above was introduced by Oleinik, \cite{Oleinik}, \cite{Oleinik1}, who established the first mathematically rigorous results on the Cauchy problem \eqref{BL:0:intro:intro} - \eqref{BL:1:intro:intro}. Indeed, Oleinik established that solutions to  \eqref{BL:0:intro:intro} - \eqref{BL:1:intro:intro} are locally (in $x$) well-posed in both regimes (1) and (2), and globally well-posed in regime (1) (under suitable hypothesis on the datum, which we do not delve into at this stage). 

Now, we investigate what  it means for $[\bar{u}^0_p, \bar{v}^0_p]$ to become asymptotically self-similar.  In order to describe this behavior more quantitatively, we need to introduce the Blasius solutions. Four years after Prandtl's seminal 1904 paper, H. Blasius introduced the (by now) famous ``Blasius boundary layer" in \cite{Blasius}, which takes the following form 
\begin{align} \label{Blasius:1}
&[\bar{u}_\ast^{x_0}, \bar{v}_\ast^{x_0}] = [f'(z), \frac{1}{\sqrt{x + x_0}} (z f'(z) - f(z)) ], \\ \label{Blasius:2}
&z := \frac{y}{\sqrt{x + x_0}} \\ \label{Blasius:3}
&ff'' + f''' = 0, \qquad f'(0) = 0, \qquad f'(\infty) = 1, \qquad \frac{f(z)}{z} \xrightarrow{\eta \rightarrow \infty} 1, 
\end{align}
where above, $f' = \p_z f(z)$ and  $x_0$ is a free parameter. Physically, $x_0$ has the meaning that at $x = - x_0$, the fluid interacts with the ``leading edge" of, say, a plate (hence the singularity at $x = - x_0$). Our analysis will treat any fixed $x_0 > 0$ (one can think, without loss of generality, that $x_0 = 1$). In fact, we will make the following notational convention which enables us to omit rewriting $x_0$ repeatedly:
\begin{align}
[\bar{u}_\ast, \bar{v}_\ast] := [\bar{u}_\ast^{1}, \bar{v}_\ast^{1}]. 
\end{align}
We emphasize that the choice of $1$ above could be replaced with any positive number, without loss of generality. 

The Blasius solutions, $[\bar{u}_\ast^{x_0}, \bar{v}_\ast^{x_0}]$ are distinguished solutions to the Prandtl equations in several senses. First, physically, they have demonstrated remarkable agreement with experiment (see \cite{Schlicting} for instance). Mathematically, their importance is two-fold. First, they are self-similar, and second, they act as large-$x$ attractors for the Prandtl dynamic. Indeed, a classical result of Serrin, \cite{Serrin}, states: 
\begin{align}
\lim_{x \rightarrow \infty} \| \bar{u}^0_p - \bar{u}_\ast \|_{L^\infty_y} \rightarrow 0 
\end{align}
for a general class of solutions, $[\bar{u}^0_p, \bar{v}^0_p]$ of \eqref{BL:0:intro:intro}.

This was revisited by the first author in the work \cite{IyerBlasius}, who established a refined description of the above asymptotics, in the sense 
\begin{align} \label{asy:blas:1}
\| \bar{u}^0_p - \bar{u}_\ast \|_{L^\infty_y} \lesssim o(1) \langle x \rangle^{- \frac 1 2 + \sigma_\ast}, \text{ for any } 0 < \sigma_\ast << 1,  
\end{align}
which is the essentially optimal decay rate from the point of view of regarding $\bar{u}^0_p$ as a parabolic equation with one spatial dimension. Both the work of \cite{IyerBlasius} and the work of \cite{Serrin} relied crucially on the ``von-Mise" change of variables, which interacts poorly with the Navier-Stokes equations. The purpose of the present article is present an alternative proof of these results, using only energy methods in the physical $(x, y)$ coordinate system.   

The case of (2) above (the setting of unfavorable pressure gradient) has been treated in the work of \cite{MD} as well as in the paper of \cite{Zhangsep} for the Prandtl equation with $\p_x P^E > 0$ (which appears as a forcing term on the right-hand side of \eqref{BL:0:intro:intro} - \eqref{BL:1:intro:intro} in their setting). These results establish the physically important phenomenon of separation, which occurs when $\p_y \bar{u}^0_p(x, 0) = 0$ for some $x > 0$, even though the datum starts out with the monotonicity $\p_y \bar{u}^0_p(0, 0) > 0$.

\subsection{Main Theorem}

The theorem we prove here is 
\begin{theorem}[Construction of Approximate Solution] \label{thm:approx}  Fix $N_{1} = 400$ and $N_2 = 200$ in \eqref{exp:base}. Fix the leading order Euler flow to be 
\begin{align}
[u^0_E, v^0_E, P^0_E] := [1, 0, 0].
\end{align}
Assume the following pieces of initial data at $\{x = 0\}$ are given for $i = 0,...,N_{1}$, and $j = 1,...N_1$,
\begin{align} \label{datum:in}
 u^i_p|_{x = 0} =: U^i_p(y), && v^j_E|_{x = 0} =: V_E^j(Y)
\end{align}
where we make the following assumptions on the initial datum \eqref{datum:in}: 
\begin{itemize}
\item[(1)] For $i = 0$, the boundary layer datum $\bar{U}^0_p(y)$ is in a neighborhood of  Blasius, defined in \eqref{Blasius:1}. More precisely, we will assume 
\begin{align} \label{near:blasius}
\| (\bar{U}^0_p(y) - \bar{u}_\ast(0, y) ) \langle y \rangle^{m_0} \|_{C^{\ell_0}} \le \delta_\ast, 
\end{align}
where $0 < \delta_\ast << 1$ is small relative to universal constants, where $m_0, \ell_0$, are large, explicitly computable numbers. Assume also the difference $\bar{U}^0_p(y) - \bar{u}_\ast(0, y)$ satisfies generic parabolic compatibility conditions at $y = 0$ as specified in Definition \ref{para:compat:def}.

\item[(2)] For $i = 1,..,N_1$, the boundary layer datum, $U^i_p(\cdot)$ is sufficiently smooth and decays rapidly:
\begin{align} \label{hyp:1}
\| U^i_p \langle y \rangle^{m_i} \|_{C^{\ell_i}} \lesssim 1,
\end{align}
where $m_i, \ell_i$ are large, explicitly computable constants (for instance, we can take $m_0 = 10,000$, $\ell_0 = 10,000$ and $m_{i+1} = m_i - 5$, $\ell_{i + 1} = \ell_i - 5$), and satisfies generic parabolic compatibility conditions at $y = 0$ (described specifically in Definition \ref{para:compat:def}).

\item[(3)] The Euler datum $V^i_E(Y)$ satisfies generic elliptic compatibility conditions, which we define specifically in Definition \ref{defn:compatible:harmonic}. 

\end{itemize}

Define $\sigma_\ast = \frac{1}{10,000}$. Then for $i = 1,...,N_1$, for $M \le m_i$ and $2k+j \le \ell_i$, the quantities $[u^i_p, v^i_p]$ and $[u^i_E, v^i_E]$ exist globally, $x > 0$, and the following estimates are valid
\begin{align} \label{blas:conv:1}
&\| \p_x^k \p_y^j ( \bar{u}^0_p - \bar{u}_\ast) \langle z \rangle^M \|_{L^\infty_y} \lesssim \delta_\ast \langle x \rangle^{- \frac 1 4 - k - \frac j 2 + \sigma_\ast}, \\ \label{blas:conv:2}
&\| \p_x^k \p_y^j ( \bar{v}^0_p - \bar{v}_\ast) \langle z \rangle^M \|_{L^\infty_y} \lesssim \delta_\ast \langle x \rangle^{- \frac 3 4 - k - \frac j 2 + \sigma_\ast} \\ \label{water:65}
&\| \langle z \rangle^M \p_x^k \p_y^j u_p^{i} \|_{L^\infty_y} \lesssim \langle x \rangle^{- \frac 1 4 - k - \frac j 2 + \sigma_\ast} 
\end{align}
\begin{align}
\label{water:54}
&\| \langle z \rangle^M \p_x^k \p_y^j v_p^{i} \|_{L^\infty_y} \lesssim \langle x \rangle^{- \frac 3 4 - k - \frac j 2 + \sigma_\ast}, \\  \label{water:78}
&\|(Y \p_Y)^l \p_x^k \p_Y^j v^{i}_E \|_{L^\infty_Y} \lesssim \langle x \rangle^{- \frac 1 2 - k - j}, \\ \label{water:88}
&\| (Y \p_Y)^l \p_x^k \p_Y^j u^{i}_E \|_{L^\infty_Y} \lesssim \langle x \rangle^{- \frac 1 2 - k - j}.
\end{align}
The following estimates hold on the contributed forcing: 
\begin{align} \label{est:forcings:part1}
\| \p_x^j \p_y^k F_R \langle x \rangle^{\frac{11}{20}+ j + \frac k 2} \| + \sqrt{\eps} \|  \p_x^j \p_y^k G_R \langle x \rangle^{\frac{11}{20}+ j + \frac k 2} \| \le \eps^{5}. 
\end{align}
\end{theorem}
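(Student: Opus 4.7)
I plan to construct the profiles in \eqref{exp:base} hierarchically: first the leading Prandtl layer $[\bar{u}^0_p,\bar{v}^0_p]$ via the Blasius perturbation system \eqref{Pr:leading:1:1:1}--\eqref{Pr:leading:1:1:3}, then inductively for each $i = 1,\dots,N_1$ the Euler corrector $[u^i_E,v^i_E]$ followed by the linear Prandtl corrector $[u^i_p,v^i_p]$ defined by \eqref{eq:lin:Pr:HYU}--\eqref{abs:intro:3}, whose forcing $F^{(i)}_p$ depends only on previously-constructed objects. The remainder bound \eqref{est:forcings:part1} is a direct structural consequence of \eqref{exp:base}, which is designed so that residues through $O(\eps^{N_1/2})$ cancel.

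\textbf{Leading-order Prandtl with Blasius asymptotics.} This is the heart of the argument and the only step where a genuinely new technique is required, since we need a physical-coordinate replacement for the von-Mise transform used in \cite{Serrin, IyerBlasius}. I work in the self-similar coordinate $z = y/\sqrt{x+1}$ and propagate a hierarchy of weighted energies $E_{k,j,M}(x) = \langle x \rangle^{1/2 + 2k + j - 2\sigma_\ast} \int (\p_x^k \p_y^j \tilde{u}^0_p)^2 \langle z \rangle^{2M}\,\ud y$, together with the associated dissipation $\int (\p_x^k \p_y^{j+1} \tilde{u}^0_p)^2 \langle z \rangle^{2M}\,\ud y$. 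Testing \eqref{Pr:leading:1:1:1} and its differentiated forms against the natural weight produces four types of contributions: (a) the positive dissipation; (b) a favorable boundary term from $\p_x$ of the $x$-weight, which controls the basic energy once $\sigma_\ast$ is chosen slightly subcritical; (c) transport errors from $\bar u_\ast \p_x$ and $\bar v_\ast \p_y$, handled using the explicit self-similarity of Blasius, the monotonicity $\p_y \bar u_\ast > 0$, the asymptotic $\bar v_\ast \sim 1/\sqrt{x+1}$, and Hardy-type inequalities against $\langle z \rangle^{2M}$; (d) the quadratic remainder $\delta_\ast Q(\tilde u^0_p, \tilde v^0_p)$, absorbed by bootstrap using the smallness of $\delta_\ast$ from \eqref{near:blasius}. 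Pointwise decay \eqref{blas:conv:1}--\eqref{blas:conv:2} is then recovered from the weighted $L^2$ hierarchy by Sobolev embedding in $y$, using that $\ell_0, m_0 \gg 1$.

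\textbf{Higher-order profiles and remainder.} The Euler corrector $v^i_E$, for our shear-flow choice $[u^0_E,v^0_E] = [1,0]$, solves $\Delta v^i_E = 0$ on the quarter plane with Dirichlet data $V^i_E(Y)$ at $\{x=0\}$ and $-v^{i-1}_p|_{y=0}$ at $\{Y=0\}$. I construct it via explicit Poisson-kernel representations, which immediately yield the polynomial decay \eqref{water:78}--\eqref{water:88} as well as the conormal $(Y\p_Y)^l$ regularity; then $u^i_E$ is recovered from incompressibility. The linear Prandtl equation \eqref{eq:lin:Pr:HYU} for $u^i_p$ is attacked by exactly the weighted energy scheme of the leading-order step, now linear around $\bar u^0_p$ (which, being close to Blasius, has coefficients well-approximated by those of $\bar u_\ast$), with the forcing $F^{(i)}_p$ estimated in the required weighted norms by the inductive hypothesis on all $j < i$ and the just-constructed Euler profile; this yields \eqref{water:65}--\eqref{water:54}. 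Finally, the residues $F_R, G_R$ in \eqref{forcing:remainder} are computed by substituting \eqref{exp:base} into \eqref{eq:NS:1}--\eqref{eq:NS:3} and collecting powers of $\sqrt{\eps}$: by construction everything through $O(\eps^{N_1/2})$ cancels, leaving products of previously-estimated profiles multiplied by at least $\eps^{N_1/2 - N_2/2} = \eps^{100}$, together with $\eps \p_x^2$-contributions from $\Delta_\eps$; combined with the decay rates already established, \eqref{est:forcings:part1} follows with enormous room (we only need $\eps^5$).

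\textbf{Principal obstacle.} The main technical difficulty is closing the weighted energy hierarchy for the leading-order Prandtl at the sharp self-similar rate and with the large weight $M$ required to propagate downstream to the linear layers. The dissipation controls only $\p_y$-derivatives, so every $z$-weight must be paid for against transport by $\bar v_\ast$, which is borderline at large $z$ and forces one to exploit the precise $\langle x \rangle^{-1/2}$ decay of $\bar v_\ast$; simultaneously one must dominate commutators appearing at each level of the derivative hierarchy so that the scheme closes uniformly in $k, j$. The small loss $\sigma_\ast$ in the exponent precisely encodes the room one must concede to make this balancing act work.
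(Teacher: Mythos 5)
Your overall blueprint (hierarchy: nonlinear Prandtl layer, then alternating Euler/linear Prandtl correctors, with the Euler layers via harmonic extensions and the remainder by brute-force counting of powers of $\sqrt{\eps}$) matches the paper. But there is a central, concrete gap in the step you call ``the heart of the argument,'' and the same gap recurs for the linearized layers: you propose to test \eqref{Pr:leading:1:1:1} and its $\p_x^k\p_y^j$-derivatives directly against $\p_x^k\p_y^j\tilde u^0_p$ with the weight $\langle x\rangle^{1/2+2k+j-2\sigma_\ast}\langle z\rangle^{2M}$, and you list the resulting contributions as dissipation, a favorable boundary term in $x$, transport errors, and the quadratic nonlinearity. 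You never address the term $\tilde v^0_p\,\p_y\bar u_\ast$ (the term $\bar v\,\p_y\bar u_B$ in the abstract system \eqref{intro:abs:AI:a}), which is the genuinely dangerous term. Since $\tilde v^0_p=-\int_0^y\p_x\tilde u^0_p$, testing the $k$-th level of your hierarchy produces a contribution of the schematic form $\int \bigl(\int_0^y\p_x^{k+1}\tilde u^0_p\bigr)\,\p_y\bar u_\ast\,\p_x^k\tilde u^0_p$, which lives at regularity level $k+1$: a one-derivative loss that cannot be absorbed into your dissipation (which controls only $\p_y$-derivatives) or your damping (which supplies only a factor $\langle x\rangle^{-1}$ with $\sigma_\ast$-room). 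This is precisely the derivative-loss mechanism for Prandtl that blocks naive Sobolev energy estimates.

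What the paper does instead --- and what the paper explicitly flags as a main ingredient --- is to pass to the ``von-Mise good unknown'' $q=\psi/\bar u_B$, $U=\p_y q$, $V=-\p_x q$ of \eqref{good:variables}, under which \eqref{eq:abs:1:a} is rewritten as $\mathcal T[U]+\bar u_{Byyy}q-\p_y^2u=F$ with $\mathcal T[U]=\bar u_B^2U_x+\bar u_B\bar v_BU_y+2\bar u_{Byy}U$ (equation \eqref{Pr:af}). The nonlocal $\bar v\,\p_y\bar u_B$ term has disappeared: $\mathcal T[U]$ is a genuine transport operator in $U$, and the remaining $\bar u_{Byyy}q$ is harmless because $q=\int_0^yU$. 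This is the steady analogue of the Masmoudi--Wong good unknown, and the whole cascade of Lemmas \ref{lem:LP:1}--\ref{lemma:LP4} is built on it, with the degenerate $\bar u_B$-weighted norms \eqref{X00:norm}--\eqref{Yhm:norm} (e.g. $\|\sqrt{\bar u_B}U_y^{(k)}\cdots\|$ rather than your unweighted $\|\p_x^k\p_y^{j+1}\tilde u^0_p\cdots\|$) reflecting the degeneracy of $\bar u_B$ at $y=0$. Without this change of unknowns, your hierarchy does not close, and the balancing of $\sigma_\ast$, the $\langle z\rangle^{2M}$ weights, and the transport by $\bar v_\ast$ that you describe is not the bottleneck.

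Two smaller omissions: (i) in the nonlinear step, the top-order estimate cannot be done against $\bar u_\ast$ alone --- the paper re-normalizes the good unknown against $\bar u_B=\bar u_\ast+\delta_\ast\tilde u^0_p$ at top order (equations \eqref{id:1:1:id}--\eqref{nlgu}) and runs a finite-$X_\ast$ bootstrap/continuation; and (ii) the last Prandtl layer $i=N_1$ must be cut off at $y\sim\langle x\rangle/\sqrt{\eps}$ in a divergence-free way to enforce $v^{N_1}_p|_{y=0}=v^{N_1}_p|_{y=\infty}=0$ simultaneously (Section \ref{cutoff:SEC}), producing the extra error $F_\chi$ you do not account for in \eqref{est:forcings:part1}.
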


\begin{remark} Although we do not state it as part of our main construction, a consequence to the our construction will be Lemmas \ref{lemma:cons:1} -- \ref{lemma:cons:3}, which are ``aggregated estimates", that is, estimates on the full approximate solution $[\bar{u}, \bar{v}]$ which follow from those stated above in Theorem \ref{thm:approx} and by taking summation according to \eqref{exp:base}.
\end{remark}

\subsection{Existing Literature} \label{existing}

The boundary layer theory originated with Prandtl's seminal 1904 paper, \cite{Prandtl}. First, we would like to emphasize that this paper presented the boundary layer theory in precisely the present setting: for 2D, steady flows over a plate (at $Y = 0$). In addition, Prandtl's original paper discussed the physical importance of understanding \eqref{exp:base} for large $x$, due to the possibility of boundary layer separation.

We will distinguish between two types of questions that are motivated by the ansatz, \eqref{ansatz:1:1}. First, there are questions regarding the description of the leading order boundary layer, $[\bar{u}^0_p, \bar{v}^0_p]$, and second, there are questions regarding the study of the $O(\sqrt{\eps})$ remainder, which, equivalently, amounts to questions regarding the validity of the asymptotic expansion \eqref{ansatz:1:1}.

A large part of the results surrounding the system \eqref{BL:0:intro:intro} - \eqref{BL:1:intro:intro} were already discussed in Section \ref{subsection:asy:x}, although the results discussed there were more concerned with the large $x$ asymptotic  behavior. We point the reader towards \cite{MD} for a study of separation in the steady setting, using modulation and blowup techniques. For local-in-$x$ behavior, let us supplement the references from Section \ref{subsection:asy:x} with the results of \cite{GI2}, which established higher regularity for \eqref{BL:0:intro:intro} - \eqref{BL:1:intro:intro} through energy methods, and the recent work of \cite{Zhifei:smooth} which obtains global $C^\infty$ regularity using maximum principle methods. One can think of the present article as a global version of the article \cite{GI2}.

We now discuss the validity of the ansatz (\ref{ansatz:1:1}). The classical setting we consider here, with the no-slip condition, was first treated, locally in the $x$ variable by the works \cite{GI1} - \cite{GI2}, \cite{Varet-Maekawa}, and the related work of \cite{GI3}. These works of \cite{GI1} - \cite{GI2} are distinct from that of \cite{Varet-Maekawa} in the sense that the main concern of \cite{GI1} - \cite{GI2} are $x$-dependent boundary layer profiles, and in particular addresses the classical Blasius solution. On the other hand, the work of \cite{Varet-Maekawa} is mainly concerned with shear solutions $(U(y), 0)$ to the forced Prandtl equations (shears flows are not solutions to the homogeneous Prandtl equations), which allows for Fourier analysis in the $x$ variable. Both of these works are \textit{local-in-$x$} results, which can demonstrate the validity of expansion \eqref{exp:base} for $0 \le x \le L$, where $L << 1$ is small (but of course, fixed relative to the viscosity $\eps$). We also mention the relatively recent work of \cite{Gao-Zhang}, which has generalized the work of \cite{GI1} - \cite{GI2} to the case of Euler flows which are not shear for $0 < x < L << 1$. 

We also point the reader towards the works \cite{GN}, \cite{Iyer}, \cite{Iyer2a} - \cite{Iyer2c}, and \cite{Iyer3}. All of these works are under the assumption of a moving boundary at $\{Y = 0\}$, which while they face the difficulty of having a transition from $Y = 0$ to $Y = \infty$, crucially eliminate the degeneracy of $\bar{u}^0_p$ at $\{Y =0\}$, which is a major difficulty posed by the boundary layer theory. The work of \cite{Iyer2a} - \cite{Iyer2c} is of relevance to this paper, as the question of global in $x$ stability was considered (again, under the assumption of a moving $\{Y = 0\}$ boundary, which significantly simplifies matters). 

For unsteady flows, there is also a  large literature studying  expansions of the type \eqref{exp:base}. We refrain from discussing this at too much length because the unsteady setting is quite different from the steady setting. Rather, we point the reader to (an incomplete) list of references. First, in the analyticity setting, for small time, the seminal works of \cite{Caflisch1}, \cite{Caflisch2} establish the stability of expansions \eqref{exp:base}. This was extended to the Gevrey setting in \cite{DMM}, \cite{DMM20}. The work of \cite{Mae} establishes stability under the assumption of the initial vorticity being supported away from the boundary.  The reader should also see the related works \cite{Asano}, \cite{LXY}, \cite{Taylor}, \cite{TWang}, \cite{Wang}. 

When the regularity setting goes from analytic/ Gevrey to Sobolev, there have also been several works in the opposite direction, which demonstrate, again in the unsteady setting, that expansion of the type \eqref{exp:base} should not be expected. A few works in this direction are \cite{Grenier}, \cite{GGN1}, \cite{GGN2}, \cite{GGN3}, \cite{GN2}, as well as the remarkable series of recent works of \cite{GrNg1}, \cite{GrNg2}, \cite{GrNg3} which settle the question and establish invalidity in Sobolev spaces of expansions of the type (\ref{exp:base}). The related question of $L^2$ (in space) convergence of Navier-Stokes flows to Euler has been investigated by many authors, for instance in \cite{CEIV}, \cite{CKV}, \cite{CV}, \cite{Kato}, \cite{Masmoudi98}, and \cite{Sueur}. 

There is again the related matter of wellposedness of the unsteady Prandtl equation. This investigation was initiated by \cite{Oleinik}, who obtained global in time solutions on $[0, L] \times \mathbb{R}_+$ for $L << 1$ and local in time solutions for any $L < \infty$, under the crucial monotonicity assumption $\p_y u|_{t = 0} > 0$. The $L << 1$ was removed by \cite{Xin} who obtained global in time weak solutions for any $L < \infty$. These works relied upon the Crocco transform, which is only available under the monotonicity condition. Also under the monotonicity condition, but without using the Crocco transform, \cite{AL} and \cite{MW} obtained local in time existence. \cite{AL} introduced a Nash-Moser type iterative scheme, whereas \cite{MW} introduced a good unknown which enjoys an extra cancellation and obeys good energy estimates. The related work of \cite{KMVW} removes monotonicity and replaces it with multiple monotonicity regions.  

Without monotonicity of the datum, the wellposedness results are largely in the analytic or Gevrey setting. Indeed, \cite{GVDie},  \cite{GVM}, \cite{Vicol}, \cite{Kuka}, \cite{Lom}, \cite{LMY}, \cite{Caflisch1} - \cite{Caflisch2}, \cite{IyerVicol} are some results in this direction. Without assuming monotonicity, in Sobolev spaces, the unsteady Prandtl equations are, in general, illposed: \cite{GVD}, \cite{GVN}. Finite time blowup results have also been obtained in \cite{EE}, \cite{KVW}, \cite{Hunter}. Moreover, the issue of boundary layer separation in the unsteady setting has been tackled by the series of works \cite{Collot1}, \cite{Collot2}, \cite{Collot3} using modulation and blowup techniques.  

The above discussion is not comprehensive, and we have elected to provide a more in-depth of the steady theory due to its relevance to the present paper. We refer to the review articles, \cite{E}, \cite{Temam} and references therein for a more complete review of other aspects of the boundary layer theory. 

\subsection{Notational Conventions}

We first define (in contrast with the typical bracket notation) $\langle x \rangle := 1+ x$. We also define the quantity 
\begin{align} \label{z:choice}
z := \frac{y}{\sqrt{x + 1}} = \frac{y}{\sqrt{\langle x \rangle}},
\end{align}
due to our choice that $x_0 = 1$ (which we are again making without loss of generality). The cut-off function $\chi(\cdot): \mathbb{R}_+ \rightarrow \mathbb{R}$ will be reserved for a particular decreasing function, $0 \le \chi \le 1$, satisfying 
\begin{align} \label{def:chi}
\chi(z) = \begin{cases} 1 \text{ for } 0 \le z \le 1 \\ 0 \text{ for } 2 \le z < \infty  \end{cases}
\end{align}
Regarding norms, we define for functions $u(x, y)$, 
\begin{align}
\| u \| := \|u \|_{L^2_{xy}} = \Big( \int u^2 \ud x \ud y \Big)^{\frac 1 2}, \qquad \|u \|_\infty := \sup_{(x,y) \in \mathcal{Q}} |u(x, y)|. 
\end{align}
We will often need to consider ``slices", whose norms we denote in the following manner
\begin{align}
\| u \|_{L^p_y} := \Big( \int u(x, y)^p \ud y \Big)^{\frac 1 p}.
\end{align}
We use the notation $a \lesssim b$ to mean $a \le Cb$ for a constant $C$, which is independent of the parameters $\eps, \delta_\ast$. We define the following scaled differential operators 
\begin{align}
\nabla_\eps := \begin{pmatrix} \p_x \\ \frac{\p_y}{\sqrt{\eps}} \end{pmatrix}, \qquad \Delta_\eps := \p_{yy} + \eps \p_{xx}.
\end{align}
For derivatives, we will use both $\p_x f$ and $f_x$ to mean the same thing. For integrals, we use the notation $\int f := \int_0^\infty f(y) \ud y$, that is, it is a one-dimensional integral, and moreover, we will omit repeating $\ud y$. These conventions are taken unless otherwise specified (by appending a $\ud y$ or a $\ud x$), which we sometimes need to do. We will often use the parameter $\delta$ to be a generic small parameter, that can change in various instances. The constant $C_\delta$ will refer to a number that may grow to $\infty$ as $\delta \downarrow 0$. 

 \subsection{Overview of Strategy}

The first step we take is to insert the proposed expansion of our approximate solution, \eqref{exp:base}, into the rescaled Navier-Stokes equations \eqref{eq:NS:1} -- \eqref{eq:NS:3}. The multiscale nature of the alternating Euler-Prandtl terms in \eqref{exp:base} requires us to carefully group terms based on both (1) order of $\eps^{\frac 1 2}$ and (2) the scale on which they vary $y = \frac{Y}{\eps^{\frac 12}}$ versus $Y$. This produces a sequence of parabolic systems for the boundary layer terms, \eqref{BL:0} -- \eqref{BL:1} for the nonlinear Prandtl layer $(i = 0)$, \eqref{rustic:1} -- \eqref{rustic:3} for the linearized Prandtl layers ($i = 1, \dots, N_1)$. This procedure also produces elliptic equations for each of the Euler terms, \eqref{Eul:harm} (in fact, these are harmonic). Our analysis is concerned with analyzing the boundary layer equations, \eqref{BL:0} -- \eqref{BL:1} and \eqref{rustic:1} -- \eqref{rustic:3}. 

On one hand, one observes from \eqref{rustic:1} -- \eqref{rustic:3} that the system governing all of the boundary layers (including the nonlinear Prandtl layer after subtracting out the Blasius flow) is the linearized system displayed below, which we write in abstract form:
\begin{subequations} 
\begin{align} \label{intro:abs:AI:a}
&\bar{u}_B \p_x u + u \p_x \bar{u}_B + \bar{v}_B \p_y u + \bar{v} \p_y \bar{u}_B - \p_y^2 u = F, \\ \label{intro:abs:AI:b}
&\bar{v} := - \int_0^y \p_x u, \\ \label{intro:abs:AI:c}
&u|_{x = 0} = u_0(y), \qquad u|_{y = 0} = g(x), \qquad u|_{y = \infty} = 0.  
\end{align}
\end{subequations}
Above, $[\bar{u}_B, \bar{v}_B]$ stands for a \textit{background} boundary layer profile. In almost all cases, we will take $[\bar{u}_B, \bar{v}_B] = [\bar{u}^0_p, \bar{v}^0_p]$. However, in a few cases, we also want to be able to take just the Blasius flow $[\bar{u}_B, \bar{v}_B] = [\bar{u}_{\ast}, \bar{v}_{\ast}]$, so we work in this abstract framework to unify these treatments.

On the other hand, one observes that the precise form of the forcing $F$ and the boundary condition $g(x)$ in the abstract problem \eqref{intro:abs:AI:a} -- \eqref{intro:abs:AI:c} (which play the role of $F_p^{(i)}$ in \eqref{rustic:1}, which itself is defined in \eqref{Fpi}, and $u_E(x)$ in \eqref{rustic:3}) depends in a delicate manner on the lower order terms in the expansion \eqref{exp:base} for $i = 1, \dots, N_1$. Therefore, the core of our strategy involves propagating an induction which establishes the bounds \eqref{blas:conv:1} -- \eqref{water:88} iteratively: we prove \eqref{blas:conv:1} -- \eqref{blas:conv:2} which initiates the induction. Then, fixing $i \ge 1$, assuming the bounds \eqref{blas:conv:1} -- \eqref{water:88} hold for $0, \dots, i - 1$, we obtain strong enough bounds on \eqref{intro:abs:AI:a} -- \eqref{intro:abs:AI:c} to close the induction. We point the reader to estimates \eqref{forcing:est:PR:2} -- \eqref{forcing:est:Pr:lin}, which quantifies precisely (in terms of regularity, $x$ decay, and $z$ decay) how the inductive bounds on $0, \dots, i-1$ feed into the forcing and the boundary condition for the $i$'th boundary layer. 

To close the entire argument, given precise estimates on $F$ and $g(x)$, which we now can regard as inputs to \eqref{intro:abs:AI:a} -- \eqref{intro:abs:AI:c}, we need to establish strong enough bounds on the corresponding solution to the linearized problem, $[u, v]$. This is achieved in Section \ref{section:PL} through a sequence of coupled weighted energy estimates. These estimates rely on several ingredients: introducing a ``von-Mise" good unknown, \eqref{good:variables}, introducing several delicate weighted norms, \eqref{X00:norm} -- \eqref{normdefuB}, and a cascade of bounds obtained through carefully chosen multipliers (Lemmas \ref{lem:LP:1} -- \ref{lemma:LP4}) which enable us to control these norms on the solution. All the while, we need to strike a delicate balance between the inductively available estimates on the inputs $F, g(x)$ with controlling a strong enough norm to propagate the inductive hypothesis. 

As a final remark, let us discuss the organization of the article. As we have mentioned above, Appendix \ref{app:A} is devoted to carefully performing the multiscale analysis that is required in order to collect the equations governing the various terms appearing in \eqref{exp:base}. We also incidentally obtain estimates \eqref{water:78} -- \eqref{water:88}, which are straightforward consequences of the harmonicity in \eqref{Eul:harm}. Section \ref{section:PL} studies the abstract problem \eqref{intro:abs:AI:a} -- \eqref{intro:abs:AI:c}. In fact, we take $g(x) = 0$; every instance in which we invoke the estimates from Section \ref{section:PL}, we perform homogenizations to translate the boundary condition $g(x)$ into the forcing. In Section \ref{NL:Section}, we apply the results of Section \ref{section:PL} to the nonlinear Prandtl layer, thereby establishing the $i = 0$ case, \eqref{blas:conv:1} -- \eqref{blas:conv:2} of the main result. In Section \ref{Section:LinPRA} we study the linearized Prandtl layers, $i = 1, \dots, N_1 -1$ and close the bounds \eqref{water:65} -- \eqref{water:88}. Section \ref{cutoff:SEC} is devoted to studying the final Prandtl layer, $i = N_1$, which is largely similar to the $i = 1, \dots N_1 - 1$ case but requires an additional cutoff argument at $y = \infty$, which contributes extra error terms that need to be controlled carefully. Finally, Section \ref{subsection:background} is devoted to accumulating ``aggregated estimates" on the complete background profiles, $[\bar{u}, \bar{v}]$, which can be thought of as a consequence of the estimates of the individual components \eqref{blas:conv:1} -- \eqref{water:88} and the summation in \eqref{exp:base}.

\section{Abstract Linearized Prandtl Equations} \label{section:PL}

\subsection{Formulation and Good Variables}

In this section, we analyze an abstract formulation of the linearized Prandtl equations against a background vector-field $[\bar{u}_B, \bar{v}_B]$:
\begin{align} \label{eq:abs:1:a}
&\bar{u}_B u_x + u \p_x \bar{u}_B + \bar{v}_B \p_y u + \bar{v} \p_y \bar{u}_B - u_{yy} = F, \\ \label{eq:abs:1:b}
&\bar{v} := - \int_0^y \p_x u, \\ \label{eq:abs:1:c}
&u|_{y = 0} = \bar{v}|_{y = 0} = u|_{y = \infty}  = 0, \\ \label{eq:abs:1:d}
&u|_{x = 0} = u_0(y). 
\end{align}
In this article, we will be making two choices of the background field: either $[\bar{u}_B, \bar{v}_B] := [\bar{u}^0_p, \bar{v}^0_p] = [\bar{u}_\ast, \bar{v}_\ast] + \delta_\ast [\tilde{u}^0_p, \tilde{v}^0_p]$ or $[\bar{u}_B, \bar{v}_B] := [\bar{u}_\ast, \bar{v}_\ast]$. To thus unify the treatment of these two choices, we introduce a parameter $\delta_{\ast \ast}$ which we will select to either take the value $\delta_{\ast}$ in the first case or $0$ in the second case. Subsequently, we define 
\begin{align} \label{deltastarstar}
[\bar{u}_B, \bar{v}_B] := [\bar{u}_\ast, \bar{v}_\ast] + \delta_{\ast \ast} [\tilde{u}^0_p, \tilde{v}^0_p].
\end{align}
We note that importantly, $[\bar{u}_B, \bar{v}_B]$ will be solutions to the nonlinear, homogeneous, Prandtl equations:
\begin{align} \label{Pr:uB}
\bar{u}_B \p_x \bar{u}_B + \bar{v}_B \p_y \bar{u}_B - \p_y^2 \bar{u}_B = 0. 
\end{align}

Let now $\psi$ be the associated stream function to the unknowns $[u, \bar{v}]$. We introduce the good variables,  
\begin{align} \label{good:variables}
q := \frac{\psi}{\bar{u}_B}, \qquad U := \p_y q = \frac{1}{\bar{u}_B} (u - \frac{\bar{u}_{By}}{\bar{u}_B} \psi) , \qquad V := - \p_x q = \frac{1}{\bar{u}_B} (\bar{v} + \frac{\bar{u}_{Bx}}{\bar{u}_B} \psi).  
\end{align}
It will be necessary to invert the above transformations, that is go from $(U, V) \mapsto (u, v)$. To do this, we record the following:
\begin{subequations}
\begin{align} \label{inv:u}
&u = \p_y \psi = \p_y \{ \bar{u}_B q \} = \bar{u}_B U + \bar{u}_{By} q, \\ \label{inv:v}
&\bar{v} = - \p_x \psi = - \p_x \{ \bar{u}_B q  \} = \bar{u}_B V - \bar{u}_{Bx} q. 
\end{align}
\end{subequations} 
Inserting now \eqref{inv:u} -- \eqref{inv:v} into the left-hand side of \eqref{eq:abs:1:a} yields 
\begin{align*}
&\bar{u}_B u_x + \bar{u}_{Bx} u + \bar{v}_B \p_y u + \bar{v} \p_y \bar{u}_B - u_{yy} \\
= & \bar{u}_B \p_x \{ \bar{u}_B U + \bar{u}_{By} q \} + \bar{u}_{Bx} (\bar{u}_B U + \bar{u}_{By} q) + \bar{v}_B \p_y \{ \bar{u}_B U + \bar{u}_{By} q \} + \bar{u}_{By}(\bar{u}_B V - \bar{u}_{Bx}q)  - u_{yy}
\end{align*}
\begin{align*}
= & |\bar{u}_B|^2 U_x + 2 (\bar{u}_B \bar{u}_{Bx} + \bar{v}_B \bar{u}_{By}) U + \bar{u}_B \bar{v}_B U_y + (\bar{u}_B \bar{u}_{Bxy} + \bar{v}_B \bar{u}_{Byy}  )q - u_{yy} \\
= & \mathcal{T}[U]+ \bar{u}_{Byyy} q -  u_{yy},
\end{align*}
where the transport operator $\mathcal{T}[U]$ is defined by 
\begin{align} \label{def:Pr:T}
\mathcal{T}[U] := (\bar{u}_B)^2 U_x + \bar{u}_B \bar{v}_B U_y + 2 \bar{u}_{Byy} U.
\end{align}
We note that the Prandtl equation, \eqref{Pr:uB}, has been invoked on $\bar{u}_B$ to go from the third to the fourth line above. To summarize, we generate the following identity 
\begin{align} \label{Pr:af}
\mathcal{T}[U]+ \bar{u}_{Byyy} q - \p_y^2 u = F.
\end{align}

As our estimates will  be in terms of the good variables introduced above, we set the notation 
\begin{align} \label{notation:1}
(\p_x^k U)_0(y) := \p_x^k U|_{x = 0} \text{ and } U^{(k)} := \p_x^k U. 
\end{align}
As a final notational point, since our definition of $U$ depends on the choice of normalizing background flow, $\bar{u}_B$, when it is not clear from context we will emphasize this dependance by introducing the notation 
\begin{align} \label{notation:U:ub}
U[\bar{u}_B] := \frac{1}{\bar{u}_B} (u - \frac{\bar{u}_{By}}{\bar{u}_B} \psi). 
\end{align}
However, in the vast majority of cases, this will be clear from context and we will thus omit the dependence of $\bar{u}_B$.  
\subsection{Norms and Embeddings} \label{subsection:LP:embed}

\subsubsection{Norms on the Solutions}

First, we introduce some parameters which will dictate regularity and $z$-decay that we control in our energy norms. As such, we fix 
\begin{align*}
&\ell_{max} = \text{index of $y$ regularity} \\
&k_{max} = \text{index of $x$ regularity} = \lfloor \frac{\ell_{max}}{2} \rfloor, \\
&m_{max} = \text{index of $z$ decay}. 
\end{align*}
Each time we invoke the estimates we are about to develop in Sections \ref{NL:Section}, \ref{Section:LinPRA}, we will specify the choice of these regularity parameters, $\ell_{max}, k_{max}, m_{max}$. 

We will perform parabolic-type energy estimates on \eqref{Pr:af}. Our energies will capture the decay claimed in \eqref{water:65}. We will define the following spaces (recall the definition of $U^{(k)}$ from \eqref{notation:1}):
\begin{align}  \n
\| U \|_{X_{k,0}} := &  \| \bar{u}_B U^{(k)} \langle x \rangle^{k- \sigma_\ast}   \|_{L^\infty_x L^2_y} + \sigma_\ast \|  \bar{u}_B U^{(k)} \langle x \rangle^{k - \frac 1 2- \sigma_\ast} \|  + \| \sqrt{\bar{u}_B} U^{(k)}_y \langle x \rangle^{k-\sigma_\ast}   \| \\ \label{X00:norm}
&+ \| \sqrt{\bar{u}_{By}} U^{(k)}_y|_{y = 0} \langle x \rangle^{k- \sigma_\ast} \|_{L^2_x},\\ 
\nonumber
\| U \|_{X_{k,m}} := &  \| \bar{u}_B U^{(k)} \langle x \rangle^{k- \sigma_\ast}  z^m \|_{L^\infty_x L^2_y} + \| \bar{u}_B U^{(k)} \langle x \rangle^{k- \frac 1 2- \sigma_\ast} z^m \|  + \| \sqrt{\bar{u}_B} U^{(k)}_y \langle x \rangle^{k-\sigma_\ast}  z^m \|, 
\end{align}
for $m = 1,...,m_{max}$ and for $k = 0,..,k_{max} + 1$. Define also
\begin{align} \label{def:X12:m}
\| U \|_{X_{k + \frac 1 2, m}} := & \| \bar{u}_B \p_x U^{(k)} \langle x \rangle^{k + \frac 1 2 - \sigma_\ast}  z ^m \| + \| \sqrt{\bar{u}_B} U^{(k)}_y \langle x \rangle^{k + \frac 1 2- \sigma_\ast}  z^m  \|_{L^\infty_x L^2_y}, 
\end{align}
for $m = 0,...,m_{max}$, $k = 0,...,k_{max}$. Finally, define
\begin{align} \n
\| U \|_{Y_{k + \frac 1 2, 0}} := & \| \bar{u}_B U^{(k)}_y \langle x \rangle^{k + \frac 1 2 - \sigma_\ast}  \|_{L^\infty_x L^2_y} + \| \sqrt{\bar{u}_B} U^{(k)}_{yy} \langle x \rangle^{k + \frac 1 2-\sigma_\ast}  \| \\ \label{Yh0:norm}
& + \| \sqrt{\bar{u}_{By}} U^{(k)}_y \langle x \rangle^{k + \frac 1 2-\sigma_\ast}|_{y = 0} \|_{L^2_x}, \\ \label{Yhm:norm}
\| U \|_{Y_{k + \frac 1 2, m}} := & \|\bar{u}_B U^{(k)}_y \langle x \rangle^{k + \frac 1 2 - \sigma_\ast} z^m  \|_{L^\infty_x L^2_y} + \| \sqrt{\bar{u}_B} U^{(k)}_{yy} \langle x \rangle^{k+\frac 1 2-\sigma_\ast} z^m \|, 
\end{align}
for $m = 1,..,m_{max}$ and $k = 0,...,k_{max}$.

It is convenient also to introduce the following notation, which can compactify sums of norms
\begin{align} \n
&\|U \|_{\mathcal{X}_{\le k, m}} := \|U \|_{X_{k,m}} + \sum_{j = 0}^{{k-1}} \Big( \|  U \|_{X_{j,m}} + \| U \|_{X_{j + \frac 1 2,m}}  + \| U \|_{Y_{j + \frac 1 2,m}} \Big), \\ \n
&\|U \|_{\mathcal{X}_{\le k + \frac 1 2, m}} :=   \sum_{j = 0}^{{k}} \Big( \| U \|_{X_{j,m}} + \|  U \|_{X_{j + \frac 1 2,m}}  + \| U \|_{Y_{j + \frac 1 2,m}} \Big).
\end{align} 

On one occasion we will need to consider ``finite"-$x$ versions of the norms $X_{k,0}, X_{k,m}$ (to run a Gronwall-type argument precisely). We give the following notation for this. Fix $0 < X_{\ast} < \infty$. 
\begin{align}  \n
\| U \|_{X_{k,0, X_{\ast}}} := &  \sup_{0 \le x \le X_{\ast}} \| \bar{u}_B U^{(k)} \langle x \rangle^{k- \sigma_\ast}   \|_{L^2_y} + \sigma_\ast \|  \bar{u}_B U^{(k)} \langle x \rangle^{k - \frac 1 2- \sigma_\ast} \|_{L^2_x(0, X_\ast) L^2_y} \\ \label{X00:norm:finX}
& + \| \sqrt{\bar{u}_B} U^{(k)}_y \langle x \rangle^{k-\sigma_\ast}   \|_{L^2_x(0, X_\ast) L^2_y} + \| \sqrt{\bar{u}_{By}} U^{(k)}_y|_{y = 0} \langle x \rangle^{k- \sigma_\ast} \|_{L^2_x(0, X_{\ast}) L^2_y},\\ \n
\| U \|_{X_{k,m, X_{\ast}}} := &  \sup_{0 \le x \le X_{\ast}} \| \bar{u}_B U^{(k)} \langle x \rangle^{k- \sigma_\ast}  z^m \|_{L^2_y} + \| \bar{u}_B U^{(k)} \langle x \rangle^{k- \frac 1 2- \sigma_\ast} z^m \|_{L^2_x(0, X_{\ast}) L^2_y}  \\ \label{X0m:norm:finX} &+ \| \sqrt{\bar{u}_B} U^{(k)}_y \langle x \rangle^{k-\sigma_\ast}  z^m \|_{L^2_x(0, X_{\ast}) L^2_y}, 
\end{align}

As a final notational point, as the norms in \eqref{X00:norm} -- \eqref{Yhm:norm} depend on the choice of background flow, $\bar{u}_B$, we will on occasion choose to emphasize this dependance through the notation: 
\begin{align} \n
\| U \|_{X_{k,0}[\bar{u}_B]} := &  \| \bar{u}_B U^{(k)} \langle x \rangle^{k- \sigma_\ast}   \|_{L^\infty_x L^2_y} + \sigma_\ast \|  \bar{u}_B U^{(k)} \langle x \rangle^{k - \frac 1 2- \sigma_\ast} \|  + \| \sqrt{\bar{u}_B} U^{(k)}_y \langle x \rangle^{k-\sigma_\ast}   \| \\ \label{normdefuB}
&+ \| \sqrt{\bar{u}_{By}} U^{(k)}_y|_{y = 0} \langle x \rangle^{k- \sigma_\ast} \|_{L^2_x},
\end{align} 
and so on. However, in the vast majority of cases, it will be clear from context and we will suppress the dependence on $\bar{u}_B$. 
 \subsubsection{Norms on the Background}

In addition to norms on the actual solution (introduced above), we also introduce norms in which we keep track of the background, $[\bar{u}_B, \bar{v}_B]$. Define 
\begin{align} \label{norm:B:def}
\| \tilde{u}^0_p, \tilde{v}^0_p \|_{X_{B}} := &\sum_{k \le 10} \sum_{j \le 20} \| \p_y^j  \p_x^k \tilde{u}^0_p \langle x \rangle^{\frac 1 4 + k + \frac j 2 - 2 \sigma_{\ast}}  \langle z \rangle^{10}\|_{L^\infty_{xy}}, \\  \label{norm:B:def:finite}
\| \tilde{u}^0_p, \tilde{v}^0_p \|_{X_{B, X_{\ast}}} := & \sum_{k \le 10} \sum_{j \le 20} \sup_{0 \le x \le X_{\ast}} \| \p_y^j  \p_x^k \tilde{u}^0_p \langle x \rangle^{\frac 1 4 + k + \frac j 2 - 2 \sigma_{\ast}}  \langle z \rangle^{10}\|_{L^\infty_{y}} 
\end{align}
A few of our forthcoming estimates (for instance, \eqref{Hardy:1:here}, \eqref{emb:start:1}, \eqref{case:case:0}, \eqref{case:case:m}), are valid under the following hypothesis on the background $[\bar{u}_B, \bar{v}_B]$: 
\begin{align} \label{boots:1}
\| \tilde{u}^0_p, \tilde{v}^0_p \|_{X_B} \le \delta_{\ast \ast}^{- \frac 12}.
\end{align}
Note that we will explicitly say this in the hypothesis of each such lemma. Note also that we interpret \eqref{boots:1} as a hypothesis on $[\bar{u}_B, \bar{v}_B]$ due to \eqref{deltastarstar}.  

 On the other hand, certain of our other estimates (for instance: Lemma \ref{lem:LP:2}, Lemma \ref{lem:LP:3}, Lemma \ref{lemma:LP4})  hold only under stronger assumptions on the background. Specifically, we define:
\begin{definition} We refer to the following as our ``Strong Inductive Estimate":
\begin{align} \label{indu:1}
&\| \langle z \rangle^{m_0} \p_x^k \p_y^j ( \bar{u}_B - \bar{u}_\ast) \|_{L^\infty_y} \lesssim \delta_{\ast \ast}^{\frac 12} \langle x \rangle^{- \frac 1 4 - k - \frac j 2 + \sigma_\ast}, \qquad 2k + j \le \ell_{max} \\ \label{indu:2}
&\| \langle z \rangle^{m_0} \p_x^k \p_y^j ( \bar{v}_B - \bar{v}_\ast) \|_{L^\infty_y} \lesssim \delta_{\ast \ast}^{\frac 1 2} \langle x \rangle^{- \frac 3 4 - k - \frac j 2 + \sigma_\ast}, \qquad 2k + j \le \ell_{max}.
\end{align}
\end{definition}
Each time we assume \eqref{indu:1} -- \eqref{indu:2} as a hypothesis, we will explicitly state so.

\subsubsection{Embeddings}

We state now the following Hardy-type inequality:
\begin{lemma}  Let $[\bar{u}_B, \bar{v}_B]$ be as in \eqref{deltastarstar}, and assume that $[\tilde{u}^0_p, \tilde{v}^0_p]$ satisfies \eqref{boots:1}. For $0 < \gamma << 1$, and for any function $f \in H^1_y$, 
\begin{align} \label{Hardy:1:here}
\| f \|_{L^2_y}^2 \lesssim \gamma \| \sqrt{ \bar{u}_B}  f_y \langle x \rangle^{\frac 1 2} \|_{L^2_y}^2 + \frac{1}{\gamma^2} \| \bar{u}_B f \|_{L^2_y}^2. 
\end{align}
\end{lemma}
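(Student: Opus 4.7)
The plan is to split the $y$-integral at a threshold $y_\star = y_\star(x)$ chosen so that $\bar{u}_B(x, y_\star) \simeq \gamma$. Since assumption \eqref{boots:1} guarantees that $[\bar{u}_B, \bar{v}_B]$ is a small perturbation of the Blasius flow $\bar{u}_\ast(x,y) = f'(z)$, one has $\bar{u}_B(x, y) \gtrsim y/\sqrt{\langle x \rangle}$ for $y$ in the boundary-layer region and $\bar{u}_B(x, y) \to 1$ as $z \to \infty$. Concretely, one may take $y_\star \simeq \gamma \sqrt{\langle x \rangle}$, so that $\bar{u}_B(x,y) \geq c \gamma$ for all $y \geq y_\star$ and $\bar{u}_B(x,s) \gtrsim s/\sqrt{\langle x\rangle}$ for $s \in [0, y_\star]$, uniformly in $x$.

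On the outer region $\{y \geq y_\star\}$, the lower bound $\bar{u}_B \geq c \gamma$ immediately gives
\[
\int_{y_\star}^\infty f^2 \, dy \leq \frac{1}{c^2 \gamma^2} \int_{y_\star}^\infty \bar{u}_B^2 f^2 \, dy \leq \frac{C}{\gamma^2} \|\bar{u}_B f\|_{L^2_y}^2,
\]
which accounts for the second term on the right-hand side of \eqref{Hardy:1:here}.

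On the inner region $\{0 \leq y \leq y_\star\}$, I would write $f(y) = f(y_\star) - \int_y^{y_\star} f_y(s) \, ds$, square, and integrate in $y$. The boundary-trace piece $\simeq y_\star f(y_\star)^2$ is handled by integrating the pointwise bound $f(y_\star)^2 \leq 2 f(y)^2 + 2 y_\star \int_{y_\star}^{2y_\star} f_y^2 \, ds$ over $y \in [y_\star, 2y_\star]$: the $f^2$ contribution falls back into the outer bound (since $\bar{u}_B \gtrsim \gamma$ there), while the $f_y^2$ contribution combines $y_\star^2 \simeq \gamma^2 \langle x \rangle$ with $\bar{u}_B \gtrsim \gamma$ to yield $\simeq \gamma\, \|\sqrt{\bar{u}_B} f_y \langle x \rangle^{1/2}\|^2$. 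For the remaining double integral produced by $\bigl(\int_y^{y_\star} f_y \, ds\bigr)^2$, Cauchy--Schwarz in the form $(\int_y^{y_\star} f_y)^2 \leq (y_\star - y) \int_y^{y_\star} f_y^2$, followed by Fubini, reduces matters to estimating $y_\star \int_0^{y_\star} s f_y(s)^2 \, ds$. Invoking the lower bound $\bar{u}_B(s) \gtrsim s/\sqrt{\langle x \rangle}$ converts $s f_y^2$ into $\sqrt{\langle x\rangle}\, \bar{u}_B f_y^2$, and then $y_\star \sqrt{\langle x\rangle} \simeq \gamma \langle x \rangle$ produces the desired factor $\gamma\, \|\sqrt{\bar{u}_B} f_y \langle x \rangle^{1/2}\|^2$.

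The main obstacle is ensuring that the inner-region analysis cleanly yields a factor of $\gamma$ (rather than a constant) in front of the derivative norm, which is what makes this Hardy inequality useful as a small-parameter absorption tool. This is exactly what the careful pairing of the threshold scaling $y_\star \simeq \gamma\sqrt{\langle x \rangle}$ with the linear vanishing $\bar{u}_B(x,s) \gtrsim s/\sqrt{\langle x \rangle}$ near $y = 0$ produces, and the closeness assumption \eqref{boots:1} to the Blasius profile is precisely what ensures both bounds hold uniformly in $x$.
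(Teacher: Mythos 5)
Your proof is correct, and the strategic picture agrees with the paper's: both split at the scale $z\simeq\gamma$, handle the outer region $z\gtrsim\gamma$ via the lower bound $\bar{u}_B\gtrsim\gamma$, and handle the inner region $z\lesssim\gamma$ by exploiting the linear vanishing $\bar{u}_B(x,y)\gtrsim y/\sqrt{\langle x\rangle}$, which indeed holds under \eqref{boots:1} since $\tilde{u}^0_p|_{y=0}=0$ and $\|\p_y\tilde{u}^0_p\|_\infty$ control gives $\delta_{\ast\ast}|\tilde{u}^0_p|\lesssim\delta_{\ast\ast}^{1/2}z$, a small perturbation of the Blasius lower bound $\bar{u}_\ast\gtrsim z$. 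Where the arguments genuinely diverge is the inner-region tactic. The paper works with a smooth cutoff $\chi(z/\gamma)$ and integrates by parts against $\p_y(y)$, producing $-\int 2 y f f_y\,\chi-\gamma^{-1}\int(y/\sqrt{x})f^2\chi'$, then applies Cauchy--Schwarz and absorbs a $\delta\|f\|_{L^2_y}^2$ term into the left-hand side. You instead use a sharp threshold $y_\star\simeq\gamma\sqrt{\langle x\rangle}$, the fundamental theorem of calculus, Cauchy--Schwarz, Fubini, and an averaging step to control the trace $f(y_\star)$. Your route avoids the absorption step entirely (at the cost of a slightly more involved boundary-trace estimate), while the paper's integration-by-parts version is a bit more compact; both extract the crucial small factor $\gamma$ in front of the derivative norm through the same mechanism $y_\star\cdot\langle x\rangle^{-1/2}\simeq\gamma$ combined with $y\lesssim\sqrt{\langle x\rangle}\,\bar{u}_B$.
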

\begin{proof} We square the left-hand side and localize the integral based on $z$ via 
\begin{align}
\int f^2 \ud y = \int f^2 \chi(\frac{z}{\gamma}) \ud y + \int f^2 (1 - \chi(\frac{z}{\gamma})) \ud y. 
\end{align}
For the localized component, we integrate by parts in $y$ via 
\begin{align}
\int f^2 \chi(\frac{z}{\gamma}) \ud y = \int \p_y (y) f^2 \chi(\frac{z}{\gamma}) \ud y = - \int 2 y f f_y \chi(\frac{z}{\gamma}) \ud y - \frac{1}{\gamma} \int \frac{y}{\sqrt{x}} f^2 \chi'(\frac{z}{\gamma}) \ud y. 
\end{align}
We estimate each of these terms via 
\begin{align}
\Big| \int y f f_y \chi(\frac{z}{\gamma})\ud y \Big| \lesssim \| f  \|_{L^2_y} \|   \sqrt{x} \sqrt{ \bar{u}_B} \sqrt{\gamma} f_y \|_{L^2_y} \le \delta \| f \|_{L^2_y}^2 + C_\delta \gamma x \| \sqrt{ \bar{u}_B} f_y \|_{L^2_y}^2.
\end{align}
For the far-field term, we estimate again via 
\begin{align}
|\int f^2 (1 - \chi(\frac{z}{\gamma})) \ud y| = |\int \frac{1}{|\bar{u}_B|^2} |\bar{u}_B|^2 f^2 (1 - \chi(\frac{z}{\gamma})) \ud y| \lesssim \frac{1}{\gamma^2} \| \bar{u}_B f \|_{L^2_y}^2.
\end{align}
We have thus obtained 
\begin{align}
\| f \|_{L^2_y}^2 \le \delta \| f \|_{L^2_y}^2 + C_\delta \gamma x \| \sqrt{ \bar{u}_B } f_y \|_{L^2_y}^2 + \frac{C}{\gamma^2} \|  \bar{u}_B f \|_{L^2_y}^2, 
\end{align}
and the desired result follows from taking $\delta$ small relative to universal constants and absorbing to the left-hand side. 
\end{proof}

\subsection{Energy Estimates} \label{subsection:LP:energy}

We recall that the equations \eqref{eq:lin:Pr}, \eqref{eq:lin:Pr:hom}, and \eqref{Pr:af} are all equivalent. We will now perform energy estimates in the formulation \eqref{Pr:af}. We recall the notational convention used in this section that, when unspecified, $\int f := \int_0^\infty f \ud y$. 

\begin{lemma} \label{lem:LP:1}  Let $[\bar{u}_B, \bar{v}_B]$ be as in \eqref{deltastarstar}, and assume that $[\tilde{u}^0_p, \tilde{v}^0_p]$ satisfies \eqref{boots:1}. Let $U$ be a solution to \eqref{Pr:af}. Then the following estimates are valid
\begin{align} \label{case:case:0}
\|U \|_{X_{0,0}}^2 \lesssim & \| \bar{u}_B|_{x = 0} U_0 \|_{L^2_y}^2 + \| F \langle x \rangle^{\frac 1 2- \sigma_\ast} \|^2, \\ \label{case:case:m}
\| U \|_{X_{0, m}}^2 \lesssim & \|U \|_{X_{0, m-1}}^2 +  \| \bar{u}_B|_{x = 0} U_0 y^m \|_{L^2_y}^2 +   \| F \langle x \rangle^{\frac 1 2- \sigma_\ast} z^m \|^2,
\end{align}
for $1 \le m \le m_{max}$.  
\end{lemma}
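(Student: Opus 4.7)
The plan is to derive both bounds via weighted $L^2$ energy estimates performed directly on the reformulated equation \eqref{Pr:af}. Using $u = \bar{u}_B U + \bar{u}_{By} q$, I would first compute
\begin{equation*}
u_{yy} = \bar{u}_B U_{yy} + 3\bar{u}_{By} U_y + 3\bar{u}_{Byy} U + \bar{u}_{Byyy} q,
\end{equation*}
so that the $\bar{u}_{Byyy} q$ contribution in \eqref{Pr:af} cancels exactly and the equation reduces to the manifestly $q$-free form
\begin{equation*}
(\bar{u}_B)^2 U_x + \bar{u}_B \bar{v}_B U_y - \bar{u}_B U_{yy} - 3 \bar{u}_{By} U_y - \bar{u}_{Byy} U = F.
\end{equation*}

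For the base estimate \eqref{case:case:0}, I would multiply this equation by $U \langle x\rangle^{-2\sigma_\ast}$ and integrate over $(0,X) \times (0, \infty)$. Integration by parts in $x$ on $(\bar{u}_B)^2 U_x U$ generates the $L^\infty_x L^2_y$ trace $\tfrac{1}{2}(\bar{u}_B U)^2|_{x=X}$, the $\sigma_\ast$-weighted interior term, and a residual $-\bar{u}_B \bar{u}_{Bx} U^2$. Integration by parts in $y$ on $-\bar{u}_B U_{yy} U$ yields the Dirichlet energy $\bar{u}_B U_y^2$ together with a cross term $\bar{u}_{By} U U_y$ (with vanishing boundary piece at $y = 0$ since $\bar{u}_B|_{y=0} = 0$), while integration by parts in $y$ on $-3 \bar{u}_{By} U_y U$ produces the wall trace $\tfrac{3}{2}\bar{u}_{By}|_{y=0} U^2|_{y=0}$ plus $\tfrac{3}{2}\bar{u}_{Byy} U^2$. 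The crucial algebraic miracle is that after summing, the coefficient of $U^2 \langle x\rangle^{-2\sigma_\ast}$ collapses to $-\tfrac{1}{2}(\bar{u}_B \bar{u}_{Bx} + \bar{v}_B \bar{u}_{By} - \bar{u}_{Byy})$, which vanishes identically by the Prandtl equation \eqref{Pr:uB}. The leftover cross term $\int\int \bar{u}_{By} U U_y$ is handled by one further integration by parts, producing a boundary piece that combines harmlessly with the $\bar{u}_{By}|_{y=0} U^2|_{y=0}$ term on the LHS and an indefinite $\bar{u}_{Byy} U^2$ bulk term; using Hardy's inequality \eqref{Hardy:1:here} together with the smallness \eqref{boots:1} — which ensures that $\bar{u}_{Byy}$ is a controlled Blasius profile plus a $\delta_{\ast\ast}^{1/2}$-small perturbation — the latter is absorbed into the Dirichlet energy and the $\sigma_\ast$-weighted term on the LHS. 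The forcing $\int\int F U \langle x\rangle^{-2\sigma_\ast}$ is bounded by Cauchy-Schwarz via the split $F \cdot \langle x\rangle^{1/2-\sigma_\ast}$ times $U \cdot \langle x\rangle^{-1/2-\sigma_\ast}$, with the $U$-factor absorbed into the $\sigma_\ast$-weighted LHS term at the cost of $\sigma_\ast^{-1}$. Taking the supremum over $X$ then closes the bound on $\|U\|_{X_{0,0}}$.

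For the weighted estimate \eqref{case:case:m} with $m \ge 1$, I would repeat the procedure with the multiplier $U z^{2m} \langle x\rangle^{-2\sigma_\ast}$. The Prandtl cancellation of the $U^2 z^{2m}$ coefficients persists unchanged, and each time a $y$- or $x$-derivative lands on the weight $z^{2m}$ rather than on $U$ or the background, a residual factor of $z^{2m-1}/\sqrt{\langle x\rangle}$ is produced; after Cauchy-Schwarz these lower-order-in-$z$ contributions are precisely controlled by $\|U\|_{X_{0,m-1}}^2$, establishing the inductive structure of the stated bound. A pleasant byproduct is that the wall-trace term from integrating $-3 \bar{u}_{By} U_y U z^{2m}$ by parts vanishes automatically since $z^{2m}|_{y=0} = 0$ for $m \ge 1$, consistent with the fact that the wall-trace norm appears only in the $m = 0$ case. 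The main obstacle I anticipate is keeping the indefinite $\bar{u}_{Byy} U^2 z^{2m}$ and the associated near-wall cross terms under control: their treatment requires carefully invoking \eqref{boots:1} and the Hardy-type inequality \eqref{Hardy:1:here} in combination with the $\sigma_\ast$-coercivity on the LHS, which is itself only of size $\sigma_\ast \ll 1$, so there is very little margin for slack in the absorption constants.
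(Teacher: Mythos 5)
Your overall strategy — a weighted $L^2$ energy estimate on the good-variable equation with multiplier $U z^{2m}\langle x\rangle^{-2\sigma_\ast}$ — is the same as the paper's. Your reformulation that uses $u_{yy}=\bar{u}_B U_{yy}+3\bar{u}_{By}U_y+3\bar{u}_{Byy}U+\bar{u}_{Byyy}q$ to cancel the $\bar{u}_{Byyy}q$ term from $\mathcal{T}[U]+\bar{u}_{Byyy}q-\p_y^2 u$ at the outset, producing a manifestly $q$-free equation, is algebraically correct and a genuine streamlining: the paper instead carries the $q$-dependent terms into the energy identity and observes their cancellation after integration by parts, leaving residual $\int\bar{u}_{Byyy}q^2 z^{2m-1}$ and $\int\bar{u}_{Byy}q^2 z^{2m-2}$ terms for $m\ge1$ (the quantity $\mathcal{I}_{m,q}$) that must then be controlled via $q/y$-Hardy arguments. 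Your version avoids those $q^2$ commutators entirely.

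However, there is a genuine gap in your treatment of the resulting bulk term. After your cancellations and the final IBP of the cross term $\int\bar{u}_{By}UU_y$, the coefficient of $U^2\langle x\rangle^{-2\sigma_\ast}$ is $-\tfrac12\bar{u}_{Byy}$, not zero; this is the quantity the paper calls the damping term (eq.\ \eqref{damping:1}). You label it "indefinite" and propose to absorb it into the Dirichlet energy and the $\sigma_\ast$-weighted coercivity via Hardy and the smallness \eqref{boots:1}. That absorption cannot work for the Blasius part of $\bar{u}_{Byy}=\bar{u}_{\ast yy}+\delta_{\ast\ast}\tilde{u}^0_{pyy}$: one has $|\bar{u}_{\ast yy}|\sim \langle x\rangle^{-1}$ with an $O(1)$ constant, so Hardy with parameter $\gamma$ gives $C\gamma\|\sqrt{\bar{u}_B}U_y\|_{L^2_y}^2 + C\gamma^{-2}\langle x\rangle^{-1}\|\bar{u}_B U\|_{L^2_y}^2$, and to push the second piece below the $\sigma_\ast$-weighted LHS term (which only carries a factor $\sigma_\ast\ll1$) forces $\gamma\gg\sigma_\ast^{-1/2}$, which in turn makes $C\gamma\gg1$ and destroys absorption into the Dirichlet energy. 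The essential ingredient you are missing is the sign of the Blasius second derivative, $\bar{u}_{\ast yy}\le 0$ (property \eqref{Blas:prop:2}): this makes $-\tfrac12\int\bar{u}_{\ast yy}U^2\langle x\rangle^{-2\sigma_\ast}z^{2m}\ge0$ a genuine good term on the left-hand side that can simply be dropped from the final inequality, so that only the $\delta_{\ast\ast}^{1/2}$-small perturbation $-\tfrac12\delta_{\ast\ast}\int\tilde{u}^0_{pyy}U^2$ needs absorption — and for that piece the Hardy-plus-\eqref{boots:1} argument you sketched does close. Without invoking $\bar{u}_{\ast yy}\le0$, the estimate does not close, and this concavity of the Blasius profile should be stated explicitly as the key structural input.
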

\begin{proof} We multiply by $U \langle x \rangle^{-2\sigma_\ast} z^{2m}$, for $m = 0,...,m_{max}$, which produces the following identity 
\begin{align} \label{yoyoyo}
\int \mathcal{T}[U] U \langle x \rangle^{-2\sigma_\ast} z^{2m}  + \int (- \p_y^2 U + \bar{u}_{Byyy} q) U \langle x \rangle^{-2\sigma_\ast} z^{2m} = \int F^{(i)} U \langle x \rangle^{-2\sigma_\ast} z^{2m}. 
\end{align}
We first analyze the transport terms, which are energetic:
\begin{align} \n
\int \mathcal{T}[U] U  \langle x \rangle^{-2\sigma_\ast} z^{2m} \ud y = & \frac{\p_x}{2} \int |\bar{u}_B|^2 U^2 \langle x \rangle^{-2\sigma_\ast} z^{2m} + (\sigma_\ast + \frac m 2) \int \bar{u}_B^2 U^2 \langle x \rangle^{-2\sigma_\ast-1} z^{2m} \\ \n
& - \frac 1 2 \int (\bar{u}_B\bar{u}_{Bx} + \bar{v}_B \bar{u}_{By}) U^2 \langle x \rangle^{-2\sigma_\ast} z^{2m} - m \int \bar{u}_B \bar{v}_B U^2 \langle x \rangle^{-2\sigma_\ast - \frac 1 2} z^{2m-1} \\ \n
& + \int 2 \bar{u}_{Byy} U^2 \langle x \rangle^{-2\sigma_\ast} z^{2m} \\ \n
= &  \frac{\p_x}{2} \int |\bar{u}_B|^2 U^2 \langle x \rangle^{-2\sigma_\ast} z^{2m} + (\sigma_\ast + \frac m 2) \int |\bar{u}_B|^2 U^2 \langle x \rangle^{-2\sigma_\ast-1} z^{2m} \\ \label{ident:trans:energy}
& + \frac 3 2 \int  \bar{u}_{Byy} U^2 \langle x \rangle^{-2\sigma_\ast} z^{2m} - m \int \bar{u}_B \bar{v}_B U^2 \langle x \rangle^{-2\sigma_\ast - \frac 1 2} z^{2m-1},
\end{align}
where we have invoked the nonlinear Prandtl equation satisfied by the background, $[\bar{u}^0_p, \bar{v}^0_p]$. The first two terms are energetic contributions, whereas the third term will be cancelled out below, see \eqref{damping:1}. For the fourth term, we estimate after integration over $x$ via 
\begin{align}
\int \int |\bar{u}_B \bar{v}_B U^2 \langle x \rangle^{-2\sigma_\ast - \frac 1 2} z^{2m-1}| \lesssim \| \bar{u}_B U \langle x \rangle^{- \frac 1 2 - \sigma_\ast} z^{m-1} \| \| \bar{u}_B U \langle x \rangle^{- \frac 1 2 - \sigma_\ast} z^{m} \|, 
\end{align}
the former term has been controlled inductively by $\| U \|_{X_{0, m-1}}$.

We next analyze the diffusive term. Due to boundary contributions which exist only when $m = 0$, it is convenient to first display the $m = 0$ calculation, which gives 
\begin{align} \n
- \int \p_y^2 u U \langle x \rangle^{-2\sigma_\ast} \ud y = & u_y U(x, 0) \langle x \rangle^{-2\sigma_\ast} + \int u_y U_y \langle x \rangle^{-2\sigma_\ast}\ud y \\ \n
= & \p_y (\bar{u}_B U + \bar{u}_{By} q) U(x, 0) \langle x \rangle^{-2\sigma_\ast} + \int \p_y (\bar{u}_B U + \bar{u}_{By} q) U_y \langle x \rangle^{-2\sigma_\ast} \ud y
\end{align}
\begin{align}
\n
= & 2\bar{u}_{By} U^2(x, 0) \langle x \rangle^{-2\sigma_\ast}  + \int \bar{u}_B U_y^2 \langle x \rangle^{-2\sigma_\ast}\ud y + 2 \int \bar{u}_{By} U U_y \langle x \rangle^{-2\sigma_\ast}\ud y \\ \n
& + \int \bar{u}_{Byy} q U_y \langle x \rangle^{-2\sigma_\ast} \ud y \\ \n
= &  \bar{u}_{By} U^2(x, 0) \langle x \rangle^{-2\sigma_\ast}+ \int \bar{u}_B U_y^2 \langle x \rangle^{-2\sigma_\ast} - 2  \int \bar{u}_{Byy} U^2 \langle x \rangle^{-2\sigma_\ast}\\ \label{jmjm}
&+ \frac 1 2 \int \bar{u}_{Byyyy} q^2 \langle x \rangle^{-2\sigma_\ast}, 
\end{align}
and the contribution from the $q$ term is 
\begin{align} \label{q:term:1}
\int\bar{u}_{Byyy} q U \langle x \rangle^{-2\sigma_\ast} = - \frac 1 2 \int \bar{u}_{Byyyy} q^2 \langle x \rangle^{-2\sigma_\ast}. 
\end{align}
This term cancels exactly the fourth term from \eqref{jmjm}. The first term from \eqref{jmjm} is positive due to 
\begin{align}
\bar{u}_{By}(x, 0) = \p_y \bar{u}_{\ast}(x, 0) + \delta_{\ast \ast} \widetilde{u}^0_{py}(x, 0) \gtrsim \langle x \rangle^{- \frac 1 2} - \delta_{\ast \ast}^{\frac 12} \langle x \rangle^{- \frac 1 2} \gtrsim \langle x \rangle^{- \frac 1 2},
\end{align}
where we have invoked the bootstrap \eqref{boots:1}. 

The third term from \eqref{jmjm} combines with the third term from \eqref{ident:trans:energy}, which contributes the following damping term 
\begin{align} \label{damping:1}
- \frac 1 2 \int \bar{u}_{Byy} U^2 \langle x \rangle^{-2\sigma_\ast} = - \frac 1 2 \int \bar{u}_{\ast yy} U^2 \langle x \rangle^{-2\sigma_\ast} - \frac 1 2 \int (\bar{u}_{Byy}  - \bar{u}_{\ast yy} ) U^2 \langle x \rangle^{-2\sigma_\ast},
\end{align}
and we estimate the latter contribution after integration in $x$ via 
\begin{align} \n
\int \int |(\bar{u}_{Byy}  - \bar{u}_{\ast yy} )| U^2 \langle x \rangle^{-2\sigma_\ast} \lesssim & \delta_{\ast \ast}^{\frac 12} \| \bar{u}_B U \langle x \rangle^{- \frac{9}{16}} \|^2 + \delta_{\ast \ast}^{\frac 12} \| \sqrt{\bar{u}_B} U_y \langle x \rangle^{- \frac{1}{16}} \|^2, 
\end{align}
where we have used the estimate \eqref{blas:conv:1}, and the Hardy type inequality \eqref{Hardy:1:here}. Due to the smallness of $\delta_{\ast \ast}$, these terms are absorbed into the left-hand side. 

Next, we treat the case when $m = 1,...,m_{max}$, which gives 
\begin{align} \n
- \int \p_y^ 2u U \langle x \rangle^{-2\sigma_\ast} z^{2m} = & \int u_y U_y \langle x \rangle^{-2\sigma_\ast} z^{2m} + 2m \int u_y U z^{2m-1} \langle x \rangle^{-2\sigma_\ast - \frac 1 2} \\ \n
= & \int \bar{u}_B U_y^2 \langle x \rangle^{-2\sigma_\ast} z^{2m} - \int 2 \bar{u}_{Byy} U^2 \langle x \rangle^{-2\sigma_\ast} z^{2m} + \frac 1 2 \int \bar{u}_{Byyyy} q^2 \langle x \rangle^{-2\sigma_\ast} z^{2m} \\ \n
& + 2m \int \bar{u}_{Byyy} q^2 \langle x \rangle^{-2\sigma_\ast - \frac 1 2} z^{2m-1} - m (2m-1) \int \bar{u}_B U^2 z^{2m-2} \langle x \rangle^{-2\sigma_\ast - 1} \\ \label{qtermpr2}
& + m(2m-1) \int \bar{u}_{Byy} q^2 \langle x \rangle^{-2\sigma_\ast - 1} z^{2m-2},
\end{align}
and the contribution from the $q$ term is 
\begin{align} \label{qtermpr1}
\int \bar{u}_{Byyy} q U \langle x \rangle^{-2\sigma_\ast} z^{2m} = & - \frac 1 2 \int \bar{u}_{Byyyy} q^2 \langle x \rangle^{-2\sigma_\ast} z^{2m} - m \int \bar{u}_{Byyy} q^2 \langle x \rangle^{-2\sigma_\ast - \frac 1 2} z^{2m-1}.
\end{align}
The first term on the right-hand side of \eqref{qtermpr1} cancels the third term from \eqref{qtermpr2}. The remaining terms from \eqref{qtermpr2} and \eqref{qtermpr1} are:
\begin{align}
\mathcal{I}_{m,q} := m \int \bar{u}_{Byyy} q^2 \langle x \rangle^{-2\sigma_\ast - \frac 1 2} z^{2m-1} + m (2m -1) \int \bar{u}_{Byy} q^2 \langle x \rangle^{-2\sigma_\ast - 1} z^{2m-2}
\end{align} 
For the $m = 0$ case, $I_{0,q} = 0$, and therefore we do not need to estimate this term to prove \eqref{case:case:0}. Assume now $m \ge 1$. We can estimate the first term above via 
\begin{align*}
| \int \bar{u}_{Byyy} q^2 \langle x \rangle^{-2\sigma_\ast - \frac 1 2} z^{2m-1}| \lesssim & \| \bar{u}_{Byyy} z^{2m-1} \langle x \rangle^{\frac 12} y^2 \|_{L^\infty_{xy}} \| U \|_{L^2_y}^2 \langle x \rangle^{-1-2\sigma_\ast} \\
\lesssim & \| \bar{u}_B U \|_{L^2_y}^2 \langle x \rangle^{-1-2\sigma_\ast} + \| \sqrt{\bar{u}_B} U_y \|_{L^2_y}^2 \langle x \rangle^{-2\sigma_\ast},
\end{align*}
both of which are integrable according to the $X_{0,0}$ norm, defined in \eqref{X00:norm}, and which has been controlled inductively for $m \ge 1$. The second term from $\mathcal{I}_{m,q}$ is estimated in a nearly identical manner, when $m \ge 2$.

Finally, upon integrating in $x$, we estimate the forcing terms via 
\begin{align}
|\int \int F U \langle x \rangle^{-2\sigma_\ast} z^{2m} \ud y \ud x| \lesssim \| F \langle x \rangle^{\frac 1 2 - \sigma_\ast} z^m \| \| U \langle x \rangle^{- \frac 1 2 - \sigma_\ast} z^{m} \|. 
\end{align}
To conclude the proof of the lemma, we integrate \eqref{yoyoyo} over $x \in [0, X_0]$ and then take the supremum over $X_0$. Upon doing so, the first term from \eqref{ident:trans:energy} yields the first quantity in \eqref{X00:norm}, the second term from \eqref{ident:trans:energy} yields the second quantity in \eqref{X00:norm}, the first and second terms from \eqref{jmjm} give the third and fourth terms from \eqref{X00:norm}, respectively. Upon using the elementary inequality $\sup_{X_0} |\int_0^{X_0} \int g \ud y \ud x| \le \int \int |g| \ud y \ud x$, we appeal to the estimation of the error terms above.   
\end{proof}

It is useful for us to state a ``finite-x" version of the above estimate, which will be useful on one occasion in the future. We formulate this as follows:
\begin{corollary}  Let $[\bar{u}_B, \bar{v}_B]$ be as in \eqref{deltastarstar}, and assume that $[\tilde{u}^0_p, \tilde{v}^0_p]$ satisfies \eqref{boots:1}. Fix any $X_\ast > 0$. Assume the bootstrap bound 
\begin{align} \label{boots:2}
\sup_{0 < x \le X_\ast} \sum_{k \le 10} \sum_{j \le 20} \|\p_x^k \p_y^j \tilde{u}^0_p(x, \cdot) \|_{L^\infty_y} \langle x \rangle^{\frac 14 - 2 \sigma_\ast + k + \frac j 2} \le \delta_{\ast \ast}^{- \frac 12}.   
\end{align}
Then 
\begin{align} \label{case:case:0:fin}
\|U \|_{X_{0,0, X_{\ast}}}^2 \lesssim & \| \bar{u}_B|_{x = 0} U_0 \|_{L^2_y}^2 + \| F \langle x \rangle^{\frac 1 2- \sigma_\ast} \|_{L^2_x(0, X_{\ast}) L^2_y}^2, \\ \label{case:case:m:fin}
\| U \|_{X_{0, m, X_{\ast}}}^2 \lesssim & \|U \|_{X_{0, m-1}}^2 +  \| \bar{u}_B|_{x = 0} U_0 y^m \|_{L^2_y}^2 +   \| F \langle x \rangle^{\frac 1 2- \sigma_\ast} z^m \|_{L^2_x(0, X_{\ast}) L^2_y}^2,
\end{align}
for $1 \le m \le m_{max}$. 
\end{corollary}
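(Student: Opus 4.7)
The proof follows essentially the same playbook as Lemma \ref{lem:LP:1}, with the key observation that every integration by parts, every energy identity, and every error estimate in that proof can be carried out on the slab $(0,X_0) \times (0,\infty)$ for any $0 < X_0 \le X_\ast$, then the supremum over $X_0 \in [0, X_\ast]$ can be taken. The role that hypothesis \eqref{boots:1} plays in Lemma \ref{lem:LP:1}, namely providing pointwise control on $\bar{u}_B - \bar{u}_\ast$ through the estimates \eqref{blas:conv:1} -- \eqref{blas:conv:2}, is now played by \eqref{boots:2} on the truncated interval $[0, X_\ast]$; since all integrals in the proof are confined to $[0, X_\ast]$, this is exactly the input needed.

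Concretely, I would start by multiplying \eqref{Pr:af} by $U \langle x \rangle^{-2\sigma_\ast} z^{2m}$ and integrating in $y$, exactly as in \eqref{yoyoyo}, producing the transport identity \eqref{ident:trans:energy} and the diffusive identities \eqref{jmjm} (for $m=0$) and \eqref{qtermpr2} (for $m \ge 1$), together with the cancellations against the $\bar{u}_{Byyy} q$ term given by \eqref{q:term:1} and \eqref{qtermpr1}. All of these identities are purely algebraic in $y$ and independent of the $x$-domain. The positivity of the boundary term $\bar{u}_{By}(x,0)\gtrsim \langle x\rangle^{-1/2}$ follows from \eqref{boots:2} in the same way it follows from \eqref{boots:1} in the original argument, since the relevant inequality only needs to hold on $[0, X_\ast]$ for the finite-$x$ norm \eqref{X00:norm:finX} to be controlled. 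Likewise, the damping term produced by combining the third term of \eqref{ident:trans:energy} with the third term of \eqref{jmjm} is handled through the same decomposition \eqref{damping:1}, where the error $(\bar{u}_{Byy} - \bar{u}_{\ast yy})$ is now estimated using the pointwise bound supplied by \eqref{boots:2} on $[0, X_\ast]$ and absorbed into the left-hand side via the smallness of $\delta_{\ast\ast}$.

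After integrating the resulting identity in $x$ over $[0, X_0]$ with $X_0 \le X_\ast$ arbitrary and taking the supremum over $X_0 \in [0, X_\ast]$, the first term of \eqref{ident:trans:energy} yields the $\sup_{0 \le x \le X_\ast} \|\bar{u}_B U \langle x\rangle^{-\sigma_\ast} z^m \|_{L^2_y}$ piece of \eqref{X00:norm:finX}/\eqref{X0m:norm:finX}; the second term yields the $L^2_x(0,X_\ast)L^2_y$ piece (multiplied by $\sigma_\ast$ when $m = 0$, absorbing the factor $\sigma_\ast + m/2$ uniformly otherwise); the first and second terms of \eqref{jmjm} yield the $\|\sqrt{\bar{u}_B} U_y\|$ and boundary trace pieces of \eqref{X00:norm:finX}. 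The forcing term is bounded via Cauchy-Schwarz on the truncated slab as in the original argument, giving the contribution $\|F \langle x\rangle^{1/2 - \sigma_\ast} z^m\|_{L^2_x(0,X_\ast)L^2_y}^2$.

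The one step that requires a comment is the handling of the inductive term $-m \int \bar{u}_B \bar{v}_B U^2 \langle x\rangle^{-2\sigma_\ast - 1/2} z^{2m-1}$ in \eqref{ident:trans:energy} together with the analogous leftover pieces of $\mathcal{I}_{m,q}$: in the original lemma these were absorbed into $\|U\|_{X_{0,m-1}}$ through a product $\|\bar{u}_B U \langle x \rangle^{-1/2-\sigma_\ast} z^{m-1}\| \cdot \|\bar{u}_B U\langle x\rangle^{-1/2-\sigma_\ast}z^m\|$. In the truncated version, it is natural to dominate the first factor by the full-$x$ norm $\|U\|_{X_{0,m-1}}$ (using the trivial bound $\|\cdot\|_{L^2_x(0,X_\ast)L^2_y} \le \|\cdot\|_{L^2_xL^2_y}$) and the second factor by $\|U\|_{X_{0,m,X_\ast}}$, then absorb the second into the left-hand side. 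This produces the right-hand side $\|U\|_{X_{0,m-1}}^2$ appearing in \eqref{case:case:m:fin}. The main technical point, which is the only genuine obstacle, is ensuring that every localization and integration by parts is performed cleanly on the slab $(0,X_0)\times(0,\infty)$, so that no boundary term at $x = X_0$ gets absorbed with the wrong sign; this is automatic because the only $x$-boundary term is the positive energy at $x = X_0$ from the time derivative in \eqref{ident:trans:energy}, minus the initial energy at $x = 0$, matching the structure of the finite-$x$ norm.
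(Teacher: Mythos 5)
Your proposal is correct and follows essentially the same approach as the paper, which simply states that the corollary follows by repeating the proof of Lemma \ref{lem:LP:1}. Your more detailed observation — that every identity and error estimate localizes cleanly to the slab $(0,X_0)\times(0,\infty)$, with the only $x$-boundary contribution being the (correctly signed) energy term at $x=X_0$, and that \eqref{boots:2} plays the role of \eqref{boots:1} on the truncated interval — is exactly what makes the repetition legitimate.
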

\begin{proof} This follows upon repeating the proof of the previous lemma.  
\end{proof}

We now estimate the $X_{\frac 1 2, m}$ scale of norms, defined in \eqref{def:X12:m}.
\begin{lemma} \label{lem:LP:2}Assume the inductive hypotheses \eqref{indu:1} -- \eqref{indu:2} . Let $U$ be a solution to \eqref{Pr:af}. Then the following estimates are valid for $m = 1,...,m_{max}$, and any $0 < \delta << 1$, 
\begin{align}
\| U \|_{X_{\frac 1 2, 0}}^2 \lesssim & C_\delta \| U \|_{X_{0,0}}^2 + \delta \|U \|_{X_{1,0}}^2 + \delta \|U \|_{Y_{\frac 1 2,0}}^2 + \| F \langle x \rangle^{\frac 1 2- \sigma_\ast}  \|^2 + \| \sqrt{\bar{u}_B} \p_y U_0 \|_{L^2_y}^2, \\ \n
\| U \|_{X_{\frac 1 2, m }}^2 \lesssim &\| U \|_{X_{0,m}}^2 + \| U \|_{X_{\frac 1 2, m -1 }}^2 +   \delta \|U \|_{X_{1,0}}^2 + \delta \|U \|_{Y_{\frac 1 2,0}}^2 + C_\delta \|U \|_{X_{0,0}}^2 \\
& + \| F \langle x \rangle^{\frac 1 2- \sigma_\ast} z^m \|^2 + \| \sqrt{\bar{u}^0_p} \p_y U_0  y^m\|_{L^2_y}^2.
\end{align}
\end{lemma}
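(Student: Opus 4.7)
The plan is to test \eqref{Pr:af} against the multiplier $U_x \langle x\rangle^{1-2\sigma_\ast} z^{2m}$ and integrate over $\mathcal{Q}$, then take the supremum over $x \in (0, X_0)$. As a preliminary step, one substitutes $u = \bar{u}_B U + \bar{u}_{By} q$ and uses $q_y = U$ to derive the identity
\begin{align*}
\bar{u}_{Byyy}\, q - u_{yy} \;=\; -\bar{u}_B U_{yy} - 3 \bar{u}_{By} U_y - 3 \bar{u}_{Byy} U,
\end{align*}
which eliminates $q$ from \eqref{Pr:af} and reduces it to $\bar{u}_B^2 U_x + \bar{u}_B \bar{v}_B U_y - \bar{u}_{Byy} U - 3\bar{u}_{By} U_y - \bar{u}_B U_{yy} = F$. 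Testing against $U_x \langle x\rangle^{1-2\sigma_\ast} z^{2m}$, the diagonal transport term $\bar{u}_B^2 U_x^2$ reproduces the first piece of $\|U\|_{X_{\frac 1 2, m}}$, while the leading diffusion $-\bar{u}_B U_{yy} U_x$, integrated by parts in $y$ (the wall boundary term vanishing because $\bar{u}_B|_{y=0} = 0$), generates
\begin{align*}
\frac{\p_x}{2}\int \bar{u}_B U_y^2 \langle x\rangle^{1-2\sigma_\ast} z^{2m}\ud y \;+\; \int \bar{u}_{By} U_y U_x \langle x\rangle^{1-2\sigma_\ast} z^{2m}\ud y \;+\; (\text{weight derivatives on } U_y^2).
\end{align*}
After integrating in $x$ over $[0, X_0]$ and taking the supremum, the first summand yields the second piece of $\|U\|_{X_{\frac 1 2, m}}$ together with the initial-data contribution $\tfrac 1 2\|\sqrt{\bar{u}_B|_{x=0}}\, \p_y U_0\, y^m\|_{L^2_y}^2$.

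\textbf{Handling the error terms.} The remaining contributions --- the cross terms $\bar{u}_B \bar{v}_B U_y U_x$ and $-3\bar{u}_{By} U_y U_x$, the zeroth-order $-\bar{u}_{Byy} U U_x$, the weight derivatives $\p_x\langle x\rangle^{1-2\sigma_\ast}$ and $\p_y z^{2m}$, and the forcing $F U_x$ --- are absorbed through weighted Young inequalities, systematically splitting each product as $\delta\|\bar{u}_B U_x \langle x\rangle^{\frac 1 2 - \sigma_\ast} z^m\|^2 + C_\delta\|(\cdot)\langle x\rangle^{\frac 1 2 - \sigma_\ast} z^m\|^2$. The first piece is absorbed into the LHS; the second is dominated by $C_\delta\|U\|_{X_{0,0}}^2$ in the $m = 0$ case (after applying the Hardy inequality \eqref{Hardy:1:here} to pass from plain $L^2_y$ to $\bar{u}_B$-weighted norms, using \eqref{indu:1}--\eqref{indu:2} to bound $\bar{u}_{By}, \bar{u}_{Byy}, \bar{v}_B$ pointwise), and by $\|U\|_{X_{0,m}}^2 + \|U\|_{X_{\frac 1 2, m-1}}^2$ when $m \geq 1$ (the $\p_y z^{2m}$ weight derivative producing the lower-$m$ companion). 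The forcing is handled by $|\int\!\!\int F U_x \langle x\rangle^{1-2\sigma_\ast} z^{2m}| \lesssim \delta\|\bar{u}_B U_x \langle x\rangle^{\frac 1 2-\sigma_\ast} z^m\|^2 + C_\delta\|F \langle x\rangle^{\frac 1 2 - \sigma_\ast} z^m\|^2$, after the usual $\bar{u}_B \cdot \bar{u}_B^{-1}$ insertion and Hardy near the wall.

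\textbf{Main obstacle.} The principal subtlety is the appearance of the ``controlled-error'' terms $\delta\|U\|_{X_{1, 0}}^2$ and $\delta\|U\|_{Y_{\frac 1 2, 0}}^2$. The wall trace $\|\sqrt{\bar{u}_{By}} U_y(x, 0) \langle x\rangle^{\frac 1 2 - \sigma_\ast}\|_{L^2_x}$ inside $\|U\|_{Y_{\frac 1 2, 0}}$ arises from a boundary term at $y = 0$ produced by a supplementary integration by parts on the cross product $-3\bar{u}_{By} U_y U_x$: the Dirichlet condition $U(x, 0) = 0$ does not kill $U_y(x, 0)$, and $\bar{u}_{By}(x, 0) > 0$ supplies the $\sqrt{\bar{u}_{By}}$ weight. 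Young's inequality then yields a small multiple $\delta\|U\|_{Y_{\frac 1 2, 0}}^2$ plus a $C_\delta$-weighted lower-order remainder. The $\delta\|U\|_{X_{1, 0}}^2$ contribution arises when, in estimating $\int\!\!\int \bar{u}_{Byy} U U_x \langle x\rangle^{1-2\sigma_\ast} z^{2m}$ (or a residual produced by an $x$-integration by parts), Young's redistribution places the surplus $\langle x\rangle^{\frac 1 2}$ factor onto the $U_x$ side, promoting it to the $X_{1, 0}$ scale. Both terms are harmless in the inductive cascade of Lemmas \ref{lem:LP:1}--\ref{lemma:LP4}, since $\|U\|_{X_{1, 0}}$ and $\|U\|_{Y_{\frac 1 2, 0}}$ are themselves controlled in subsequent estimates and the small constants $\delta$ permit Gronwall-style absorption at closure. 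The careful bookkeeping of which norm each cross-product factor feeds into --- ensuring no top-level term escapes with a large coefficient --- is the main technical point.
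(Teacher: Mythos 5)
Your choice of multiplier $U_x\langle x\rangle^{1-2\sigma_\ast}z^{2m}$, the integration by parts on the diffusion, and the use of the Hardy inequality \eqref{Hardy:1:here} all match the paper. Your preliminary algebraic reduction is a valid and slightly cleaner reformulation: with $u = \bar u_B U + \bar u_{By}q$ and $q_y = U$, indeed
\[
\bar u_{Byyy}\,q - u_{yy} = -\bar u_B U_{yy} - 3\bar u_{By}U_y - 3\bar u_{Byy}U,
\]
so eliminating $q$ from \eqref{Pr:af} before testing avoids the $q$-cancellation bookkeeping the paper carries through (the paper keeps the $\bar u_{Byyy}q$ term, absorbing it after a further $y$-integration-by-parts on $\bar u_{Byy}qU_{xy}$), and it also makes the wall boundary term from the diffusion vanish automatically since $\bar u_B(x,0)=0$, whereas the paper's $-\p_y^2u$ produces the term $u_yU_x(x,0)=2\bar u_{By}UU_x(x,0)$ which it then rewrites as a total $x$-derivative and absorbs into $\|U\|^2_{X_{0,0}}$.

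However, your \emph{Main obstacle} paragraph contains a genuine gap. First, the assertion ``the Dirichlet condition $U(x,0)=0$'' is false: only $u(x,0)=0$ is prescribed, and in general $U(x,0)=u_y(x,0)/(2\bar u_{By}(x,0))\neq 0$; the norms in \eqref{X00:norm}, \eqref{Yh0:norm} deliberately track wall traces of $U$ and $U_y$. Second, and more importantly, no integration by parts in $y$ of $\bar u_{By}U_yU_x$ can produce the $U_y(x,0)^2$ boundary term you claim: such an IBP yields a boundary contribution proportional to $\bar u_{By}UU_x(x,0)$, not $\bar u_{By}U_y^2(x,0)$. Likewise, $\|U\|_{X_{1,0}}$ does not enter by redistributing a factor $\langle x\rangle^{1/2}$ onto the $U_x$ side. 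The actual mechanism --- spelled out in \eqref{lights:1}--\eqref{lights:2} of the paper --- is that near the wall ($z<1$) the term $\int\bar u_{By}U_yU_x\langle x\rangle^{1-2\sigma_\ast}$ cannot absorb the $\bar u_B$ weights the norms demand, because $\bar u_B\sim z$ degenerates there; one therefore applies the Hardy-type inequality \eqref{Hardy:1:here} to both $U_y$ and $U_x$, which trades unweighted $L^2_y$ control for $\sqrt{\bar u_B}U_{yy}$ and $\sqrt{\bar u_B}U_{xy}$ (with a small coefficient). These are exactly the second entries of $\|U\|_{Y_{\frac12,0}}$ and the third entry of $\|U\|_{X_{1,0}}$ respectively. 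Your write-up correctly invokes Hardy for the zeroth-order and forcing terms, but misattributes the appearance of the two $\delta$-weighted top-order norms to a boundary term and to weight-shuffling; neither mechanism is present, and neither would produce those norms.
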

\begin{proof} We apply the multiplier $U_x \langle x \rangle^{1-2\sigma_\ast} z^{2m}$ for $m = 0,...,m_{max}$, which generates the identity 
\begin{align} \n
&\int \mathcal{T}[U] U_x  \langle x \rangle^{1-2\sigma_\ast} z^{2m} \ud y - \int \p_y^2 u U_x  \langle x \rangle^{1-2\sigma_\ast} z^{2m} \ud y \\ \label{energetic:2}
+& \int \bar{u}_{Byyy} q U_x  \langle x \rangle^{1-2\sigma_\ast} z^{2m} \ud y = \int F^{(i)} U_x  \langle x \rangle^{1-2\sigma_\ast} z^{2m} \ud y. 
\end{align}
We will first analyze those terms coming from the first integral above in \eqref{energetic:2}. We have 
\begin{align} \n
\int \mathcal{T}[U] U_x  \langle x \rangle^{1-2\sigma_\ast} z^{2m} = &\int |\bar{u}_B|^2 U_x^2  \langle x \rangle^{1-2\sigma_\ast} z^{2m}  + \int \bar{u}_B \bar{v}^0_p U_y U_x  \langle x \rangle^{1-2\sigma_\ast} z^{2m}  \\ \label{treL1}
& + \int 2 \bar{u}_{Byy} U U_x  \langle x \rangle^{1-2\sigma_\ast} z^{2m}. 
\end{align}
The first term from \eqref{treL1} is a positive contribution to the left-hand side. The second term, we estimate via 
\begin{align}
|\int \bar{u}_B \bar{v}_B U_y U_x \langle x \rangle^{1-2\sigma_\ast} z^{2m}| \lesssim \| \frac{\bar{v}_B}{\bar{u}_B} \langle x \rangle^{\frac 1 2} \|_\infty \| \sqrt{\bar{u}_B} U_y \langle x \rangle^{-\sigma_\ast} z^{m} \|_{L^2_y} \| \bar{u}_B U_x \langle x \rangle^{\frac 1 2 - \sigma_\ast} z^m \|_{L^2_y}, 
\end{align}
where we have invoked the decay estimate on $\bar{v}_B$ from \eqref{v:blasius} and \eqref{indu:2}. For the third term from \eqref{treL1}, we obtain 
\begin{align} \label{identical:1}
|\int \bar{u}_{Byy} UU_x \langle x \rangle^{1-2\sigma_\ast} z^{2m}| \lesssim \| \frac{\bar{u}_{Byy}}{\bar{u}_B} \langle x \rangle \|_\infty \| U \langle x \rangle^{- \frac 1 2 - \sigma_\ast} z^m \|_{L^2_y} \| \bar{u}_B U_x \langle x \rangle^{\frac 1 2 - \sigma_\ast} z^m \|_{L^2_y}, 
\end{align}
where we have invoked the decay estimate on $\bar{u}^0_{Byy}$ which results from combining \eqref{water:1} and \eqref{indu:1}. 

We now address the second term, containing $- \p_{yy} u$, from \eqref{energetic:2}. For this, it is more convenient to split into the case when $m = 0$ and when $m \ge 1$. We first treat the case when $m = 0$, in which case we generate the following identity 
\begin{align} \label{pacha:1}
- \int \p_y^2 u U_x \langle x \rangle^{1-2\sigma_\ast} = \int u_y U_{xy} \langle x \rangle^{1-2\sigma_\ast} \ud y + u_y U_x \langle x \rangle^{1-2\sigma_\ast}(x, 0). 
\end{align}
We first treat the boundary contribution from above by expanding 
\begin{align} \n
u_y U_x \langle x \rangle^{1-2\sigma_\ast}(x,0) = & 2 \bar{u}_{By} UU_x \langle x \rangle^{1-2\sigma_\ast}(x, 0) \\ \label{identical:2}
= & \p_x (\bar{u}_{By} U^2 \langle x \rangle^{1-2\sigma_\ast}) - \bar{u}_{Bxy} U^2 \langle x \rangle^{1-2\sigma_\ast} - (1-2\sigma_\ast) \bar{u}_{By} U^2 \langle x \rangle^{-2\sigma_\ast},
\end{align}
which, upon further integrating in $x$, is bounded above by $\| U \|_{X_{0,0}}^2$. 

For the first integral in \eqref{pacha:1}, we have
\begin{align} \label{pacha:2}
\int u_y U_{xy} \langle x \rangle^{1-2\sigma_\ast} = \int (\bar{u}_B U_y + 2 \bar{u}_{By} U + \bar{u}_{Byy}q) U_{xy} \langle x \rangle^{1-2\sigma_\ast}.
\end{align}
We treat each of the three terms above individually. The first term gives 
\begin{align} \label{energy2222}
\int \bar{u}_B U_y U_{xy} \langle x \rangle^{1-2\sigma_\ast} = \frac{\p_x}{2} \int \bar{u}_B U_y^2 \langle x \rangle^{1-2\sigma_\ast} -  \frac{1}{2} \int \bar{u}_{Bx} U_y^2 \langle x \rangle^{1-2\sigma_\ast} - \frac{1-2\sigma_\ast}{2} \int \bar{u}_B U_y^2 \langle x \rangle^{-2\sigma_\ast}.  
\end{align}
The first term above is energetic, while the third is clearly controlled by $\| U \|_{X_{0,0}}^2$ upon integrating in $x$.  We estimate the second term, after integration in $x$, via 
\begin{align}
|\int \int \bar{u}_{Bx} U_y^2 \langle x \rangle^{1-2\sigma_\ast}| \lesssim \| \frac{ \bar{u}_{Bx} }{\bar{u}_B} \langle x \rangle \|_\infty \| \sqrt{\bar{u}_B} U_y \langle x \rangle^{-\sigma_\ast} \|^2 \lesssim \| U \|_{X_{0,0}}^2, 
\end{align}
where we have invoked the decay estimate \eqref{water:1} as well as \eqref{indu:1}. 

For the second term from \eqref{pacha:2}, we integrate by parts in $y$ which gives 
\begin{align} 
\int 2 \bar{u}_{By} U U_{xy} \langle x \rangle^{1-2\sigma_\ast} =& - \int \bar{u}_{Byy} U U_x \langle x \rangle^{1-2\sigma_\ast} - \int \bar{u}_{By} U_y U_x \langle x \rangle^{1-2\sigma_\ast}  - \bar{u}_{By} UU_x \langle x \rangle^{1-2\sigma_\ast}(x, 0). 
\end{align}
The first term above is identical to \eqref{identical:1}, while the boundary contribution above is identical to \eqref{identical:2}. For the second term, we localize based on the value of $z$. First, when $z \ge 1$, we have (again integrating in $x$)
\begin{align} \n
|\int \int \bar{u}_{By} U_y U_x \langle x \rangle^{1-2\sigma_\ast} (1 - \chi(z))| \lesssim & \| \bar{u}_{By} \langle x \rangle^{\frac 1 2} \|_\infty \| \sqrt{\bar{u}}_B U_y \langle x \rangle^{-\sigma_\ast} \| \| \bar{u}_B U_x \langle x \rangle^{\frac 1 2 - \sigma_\ast} \| \\ \label{lights:1}
\lesssim & \| U \|_{X_{0,0}} \| U \|_{X_{\frac 1 2, 0}},
\end{align}
where we have used that $z \ge 1$ to insert factors of $\bar{u}_B$ due to the boundedness of $\bar{u}_B^{-1}$ when $z \ge 1$, as well as the pointwise decay of $\bar{u}_{By}$ from \eqref{water:1} and \eqref{indu:1}. For the case when $z < 1$, we need to invoke the norms $Y_{\frac 1 2, 0}, X_{1,0}$. Indeed, we estimate (again after integration in $x$)
\begin{align} \n
&|\int \int \bar{u}_{By} U_y U_x \langle x \rangle^{1-2\sigma_\ast} \chi(z)| \lesssim  \| \bar{u}_{By} \langle x \rangle^{\frac 1 2} \|_\infty \| U_y \langle x \rangle^{-\sigma_\ast} \| \| U_x \langle x \rangle^{\frac 1 2 - \sigma_\ast}  \| \\ \n
\le & (C_{\delta_1} \| \sqrt{\bar{u}_B} U_y \langle x \rangle^{-\sigma_\ast} \| + \delta_1 \| \sqrt{\bar{u}_B} U_{yy} \langle x \rangle^{\frac 1 2 - \sigma_\ast} \| ) ( C_{\delta_2} \| \bar{u}_B U_x \langle x \rangle^{\frac 1 2 - \sigma_\ast}  \| + \delta_2 \| \sqrt{\bar{u}_B} U_{xy} \langle x \rangle^{1-\sigma_\ast} \| ) \\ \label{lights:2}
\le & C_{\delta} \|U \|_{X_{0,0}}^2 + \delta \|U \|_{X_{\frac 1 2, 0}}^2 + \delta \|U \|_{X_{1,0}}^2 +  \delta \|U \|_{Y_{\frac 1 2, 0}}^2.
\end{align}

For the third term from \eqref{pacha:2}, we again integrate by parts in $y$ which gives 
\begin{align}
\int \bar{u}_{Byy} q U_{xy} \langle x \rangle^{1-2\sigma_\ast} = - \int \bar{u}_{Byyy} q U_x \langle x \rangle^{1-2\sigma_\ast} - \int \bar{u}_{Byy} UU_x \langle x \rangle^{1-2\sigma_\ast}, 
\end{align}
where the first term above cancels the third term from \eqref{energetic:2}, and the second term above is identical to \eqref{identical:1}.

We now handle the case when $m = 1,...,m_{max}$. Here, we do not acquire boundary contributions at $y = 0$ as in the $m = 0$ case, but we generate commutators in $z$.  
\begin{align} \n
- \int u_{yy} U_x \langle x \rangle^{1-2\sigma_\ast} z^{2m} = & \int u_y U_{xy} \langle x \rangle^{1-2\sigma_\ast} + 2m \int u_y U_x \langle x \rangle^{\frac 1 2 - 2 \sigma_\ast} z^{2m-1} \\ \n
= & \int \bar{u}_B U_y U_{xy} \langle x \rangle^{1-2\sigma_\ast} z^{2m} + \int 2 \bar{u}_{By} U U_{xy} \langle x \rangle^{1-2\sigma_\ast} z^{2m} \\ \n
&+ \int \bar{u}_{Byy} q U_{xy} \langle x \rangle^{1-2\sigma_\ast} z^{2m} + 2m \int \bar{u}_B U_y U_x \langle x \rangle^{\frac 1 2 - 2 \sigma_\ast} z^{2m-1} \\ \label{honne:j:1}
&+ 4 m\int \bar{u}_{By} U U_x \langle x \rangle^{\frac 1 2 - 2 \sigma_\ast} z^{2m-1} + 2m \int \bar{u}_{Byy} q U_x \langle x \rangle^{\frac 1 2 - 2 \sigma_\ast} z^{2m-1}.
\end{align}
For the first term in \eqref{honne:j:1}, we have
\begin{align} \label{Baleaf}
\int \bar{u}_B U_y U_{xy} \langle x \rangle^{1-2\sigma_\ast} z^{2m} = \frac{\p_x}{2} \int \bar{u}_B U_y^2 \langle x \rangle^{1-2\sigma_\ast} z^{2m} - \int \bar{u}_{Bx} U_y^2 \langle x \rangle^{1-2\sigma_\ast} z^{2m} + \frac m 2 \int \bar{u}_B U_y^2 \langle x \rangle^{-2\sigma_\ast} z^{2m}. 
\end{align}
The third term from \eqref{Baleaf} is clearly bounded by $\|U \|_{X_{0,m}}^2$ by definition. We estimate the middle term from \eqref{Baleaf}, upon integration over $x$, via 
\begin{align}
|\int \int \bar{u}_{Bx} U_y^2 \langle x \rangle^{1-2\sigma_\ast} z^{2m}| \lesssim \| \frac{\bar{u}_{Bx}}{\bar{u}_B} \langle x \rangle \|_\infty \| \sqrt{\bar{u}_B} U_y \langle x \rangle^{- \sigma_\ast} z^{m} \|^2 \lesssim \| U \|_{X_{0, m}}^2,  
\end{align}
where we have invoked the decay estimate from \eqref{water:1}. 

For the second term from \eqref{honne:j:1}, we have 
\begin{align} \n
\int 2 \bar{u}_{By} U U_{xy} \langle x \rangle^{1-2\sigma_\ast} z^{2m} = &- \int 2 \bar{u}_{Byy} UU_x \langle x \rangle^{1-2\sigma_\ast} z^{2m} - \int 2 \bar{u}_{By} U U_x \langle x \rangle^{\frac 1 2 - 2 \sigma_\ast} z^{2m-1} \\
& - \int 2 \bar{u}_{By} U_y U_x \langle x \rangle^{1-2\sigma_\ast} z^{2m}.
\end{align}
Above, the first two terms are essentially estimated in an identical manner to \eqref{identical:1}, whereas the third term is estimated in an identical manner to \eqref{lights:1} - \eqref{lights:2}.
 
For the third term from \eqref{honne:j:1}, we have 
\begin{align} \n
\int \bar{u}_{Byy} q U_{xy} \langle x \rangle^{1-2\sigma_\ast} z^{2m} = & - \int \bar{u}_{Byyy} q U_x \langle x \rangle^{1-2\sigma_\ast} z^{2m} - \int \bar{u}_{Byy} UU_x \langle x \rangle^{1-2\sigma_\ast} z^{2m} \\
&- 2m \int \bar{u}_{Byy} q U_x \langle x \rangle^{\frac 1 2 - 2 \sigma_\ast}  z^{2m-1}.
\end{align}
The first term above cancels the contribution from the third term from \eqref{energetic:2}. The second term above is estimated in an identical manner to \eqref{identical:1}. For the third term above, we estimate via 
\begin{align} \n
|\int \int 2 \bar{u}_{By} U_y U_x \langle x \rangle^{1-2\sigma_\ast} z^{2m}| \lesssim &\| \bar{u}_{Byy} z^{2m-1} y \langle x \rangle^{\frac 1 2} \|_\infty \| \frac{q}{y} \langle x \rangle^{- \frac 1 2 - \sigma_\ast} \| \| U_x \langle x \rangle^{\frac 1 2 - \sigma_\ast} \|  \\
\lesssim & \| U \langle x \rangle^{- \frac 1 2 - \sigma_\ast} \| \| U_x \langle x \rangle^{\frac 1 2 - \sigma_\ast} \|. 
\end{align}

We finally have the forcing term,
\begin{align}
|\int \int F U_x \langle x \rangle^{1-2\sigma_\ast} z^{2m}| \lesssim \| F \langle x \rangle^{\frac 1 2 - \sigma_\ast} z^m \| \| U_x \langle x \rangle^{\frac 1 2-\sigma_\ast} z^m \|.
\end{align}
To conclude the proof of the lemma, we integrate \eqref{energetic:2} over $x \in [0, X_0]$ and then take the supremum over $X_0$. Upon doing so, the first term from \eqref{treL1} and the first term from \eqref{energy2222} give the positive quantities we need to control, according to \eqref{def:X12:m}. Upon using the elementary inequality $\sup_{X_0} |\int_0^{X_0} \int g \ud y \ud x| \le \int \int |g| \ud y \ud x$, we appeal to the estimation of the error terms above.   
\end{proof}

\begin{lemma} \label{lem:LP:3}   Assume the inductive hypotheses \eqref{indu:1} -- \eqref{indu:2}. Let $U$ be a solution to \eqref{Pr:af}. Then the following estimates are valid for $m = 1,...,m_{max}$
\begin{align}
\| U \|_{Y_{\frac 12, 0}}^2 \lesssim & C_\delta \| U \|_{X_{0,0}}^2 + \delta \|U \|_{X_{\frac 1 2, 0}}^2 + \| \p_y F \langle x \rangle^{1 - \sigma_\ast} \|^2 + \| \bar{u}^0_p \p_y U_0 \|_{L^2_y}^2,  \\ \n
\| U \|_{Y_{\frac 1 2, m}}^2 \lesssim &\| U \|_{X_{0,m}}^2 + \delta \|U \|_{X_{\frac 1 2, m}}^2 + C_\delta \|U \|_{X_{0,0}}^2 + \| U \|_{X_{0, m-1}}^2+ \| \p_y F \langle x \rangle^{1 - \sigma_\ast} z^m \|^2 \\
& + \| \bar{u}_B|_{x= 0} \p_y U_0 y^m \|_{L^2_y}^2.
\end{align}
\end{lemma}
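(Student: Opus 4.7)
The plan is to perform a weighted energy estimate on \eqref{Pr:af} using the multiplier $-U_{yy}\langle x\rangle^{1-2\sigma_\ast} z^{2m}$, which is the natural scale producing the principal $\sqrt{\bar{u}_B}\,U_{yy}$ quantity of the $Y_{\frac{1}{2}, m}$ norm. Writing $W := \langle x\rangle^{1-2\sigma_\ast} z^{2m}$, the crucial input is the identity $u_{yy} = \bar{u}_B U_{yy} + 3\bar{u}_{By} U_y + 3\bar{u}_{Byy} U + \bar{u}_{Byyy} q$ (from $u = \bar{u}_B U + \bar{u}_{By} q$ and $q_y = U$), whose $\bar{u}_{Byyy} q U_{yy}$ piece cancels exactly against the corresponding term from \eqref{Pr:af}, leaving $\int \bar{u}_B U_{yy}^2 W$ as the dominant positive contribution. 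The transport piece $-\int \mathcal{T}[U] U_{yy} W$ is handled by a single IBP in $y$: the leading $|\bar{u}_B|^2 U_{xy} U_y W$ contribution produces the energetic $\p_x \tfrac{1}{2} \int |\bar{u}_B|^2 U_y^2 W$, whose $x$-integration yields $\|\bar{u}_B U_y\|_{L^\infty_x L^2_y}^2$ together with the initial datum $\|\bar{u}_B|_{x=0}\,\p_y U_0\|_{L^2_y}^2$. Boundary contributions from $|\bar{u}_B|^2 U_x$ and $\bar{u}_B \bar{v}_B U_y$ at $y = 0$ vanish by $\bar{u}_B|_{y=0} = \bar{v}_B|_{y=0} = 0$, while the residual cross boundary-term $2\bar{u}_{Byy}(x, 0) U(x, 0) U_y(x, 0) W(x, 0)$ is grouped with the trace contribution treated below.

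The cross terms $3\int \bar{u}_{By} U_y U_{yy} W$, $3\int \bar{u}_{Byy} U U_{yy} W$, and commutators from the $\mathcal{T}$-IBP are handled via Cauchy--Schwarz with a Young parameter $\delta$, splitting one factor into the $\sqrt{\bar{u}_B} U_{yy}$ energy and the other into $\|U\|_{X_{\frac{1}{2}, 0}}^2$ (with coefficient $\delta$) or $\|U\|_{X_{0, 0}}^2$ (coefficient $C_\delta$). The background decay estimates \eqref{indu:1}--\eqref{indu:2} control quotients like $\bar{u}_{By}/\bar{u}_B$ and $\bar{u}_{Byy}/\bar{u}_B$ pointwise, and Hardy's inequality \eqref{Hardy:1:here} converts unweighted $L^2$ norms into $\bar{u}_B$-weighted ones when needed. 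The forcing $-\int F U_{yy} W$ is integrated by parts once in $y$ to yield $\int \p_y F\,U_y W$, directly estimated by $\|\p_y F\,\langle x\rangle^{1-\sigma_\ast}\,z^m\|^2$. For $m \geq 1$, the weight $z^{2m}$ vanishes at $y = 0$ removing all trace issues, and commutators with $W_y$ produce $z^{2m-1}/\sqrt{x}$-factors which, after Cauchy--Schwarz, are bounded by combinations of $\|U\|_{X_{0, m-1}}^2$ and $\|U\|_{X_{0, m}}^2$, closing the induction in $m$.

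The main technical obstacle is the $m = 0$ boundary trace $\|\sqrt{\bar{u}_{By}}\,U_y|_{y=0}\,\langle x\rangle^{\frac{1}{2}-\sigma_\ast}\|_{L^2_x}$ in the $Y_{\frac{1}{2}, 0}$ norm: the IBP on $3\int \bar{u}_{By} U_y U_{yy} W$ produces $-\tfrac{3}{2}\,\bar{u}_{By}(x, 0)\,U_y^2(x, 0)\,\langle x\rangle^{1-2\sigma_\ast}$ on the LHS, whose sign is unfavourable for direct absorption. The strategy to bypass this is an algebraic reduction of \eqref{eq:abs:1:a} at $y = 0$: since $u|_{y=0} = \bar{u}_B|_{y=0} = \bar{v}_B|_{y=0} = \bar{v}|_{y=0} = 0$, the equation collapses to $u_{yy}(x, 0) = -F(x, 0)$, which combined with the pointwise identity $u_{yy}|_{y=0} = 3\bar{u}_{By}(x, 0)\,U_y(x, 0) + 3\bar{u}_{Byy}(x, 0)\,U(x, 0)$ yields
\[
3\,\bar{u}_{By}(x, 0)\,U_y(x, 0) \;=\; -F(x, 0) - 3\,\bar{u}_{Byy}(x, 0)\,U(x, 0).
\]
After dividing by $\bar{u}_{By}(x, 0) \gtrsim \langle x\rangle^{-1/2}$ (from \eqref{indu:1}), squaring, and integrating against $\langle x\rangle^{1-2\sigma_\ast}$, the trace norm is dominated by an $F(x, 0)$-trace contribution---bounded by $\|\p_y F\,\langle x\rangle^{1-\sigma_\ast}\|^2$ via a standard one-dimensional trace inequality after accounting for the polynomial $\langle x\rangle$-weights---plus a $U(x, 0)$-trace piece absorbed into $\|U\|_{X_{0, 0}}^2$. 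The same algebraic identity simultaneously dispatches the residual cross boundary-terms $2\bar{u}_{Byy}|_{y=0}\,U|_{y=0}\,U_y|_{y=0}$ and $F|_{y=0}\,U_y|_{y=0}$ generated in the energy identity.
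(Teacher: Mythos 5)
Your multiplier $-U_{yy}\langle x\rangle^{1-2\sigma_\ast}z^{2m}$ differs from the paper's by a single integration by parts: the paper applies $\p_y$ to \eqref{Pr:af} first and then multiplies by $U_y \langle x\rangle^{1-2\sigma_\ast}z^{2m}$. For $m=0$ the resulting energy identities are literally the same once the boundary terms are tracked, and for $m\geq1$ they differ only by benign commutators with $W_y$. The transport IBP, the exact cancellation of $\bar{u}_{Byyy}q\,U_{yy}$, the $\delta$-Young splittings, and the weight commutators for $m\ge1$ are all organized correctly. One minor point: the boundary term $2\bar{u}_{Byy}(x,0)\,U(x,0)U_y(x,0)W(x,0)$ you flag from the transport IBP is vacuous, since evaluating the Prandtl equation \eqref{Pr:uB} at $y=0$ (where $\bar{u}_B=\bar{v}_B=0$) gives $\bar{u}_{Byy}(x,0)=0$.

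The genuine flaw is in the ``bypass'' for the $m=0$ trace. The inequality you invoke, $F(x,0)^2\langle x\rangle^{3/2-2\sigma_\ast}\lesssim \|\p_y F\,\langle x\rangle^{1-\sigma_\ast}\|_{L^2_y}^2$, is false: there is no one-dimensional trace inequality bounding $|g(0)|$ by $\|g'\|_{L^2(0,\infty)}$ alone. (Take $g'(y)=-cL^{-1}\chi_{[0,L]}(y)$, so $g(0)=c$ but $\|g'\|_{L^2}^2 = c^2/L\to 0$ as $L\to\infty$; controlling $|g(0)|$ requires either $\|g\|_{L^2}$ as a second factor or a weighted version of $\|g'\|$.) Since the lemma's right-hand side carries only $\|\p_y F\langle x\rangle^{1-\sigma_\ast}\|^2$, this step does not close.

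Fortunately the detour is unnecessary, and your own ingredients already contain the fix. You have two $y=0$ boundary contributions: $-\tfrac{3}{2}\,\bar{u}_{By}(x,0)\,U_y^2(x,0)\,W(x,0)$ from the IBP on $3\int\bar{u}_{By}U_yU_{yy}W$ (on the left side), and $F(x,0)U_y(x,0)W(x,0)$ from the IBP on $-\int F\,U_{yy}W$ (on the right side). Moving the latter to the left and substituting your algebraic identity $F(x,0)=-3\bar{u}_{By}(x,0)U_y(x,0)$ gives
\begin{align*}
-\tfrac{3}{2}\,\bar{u}_{By}U_y^2\,W\big|_{y=0} \;-\; F\,U_y\,W\big|_{y=0} \;=\; \bigl(-\tfrac{3}{2}+3\bigr)\,\bar{u}_{By}U_y^2\,W\big|_{y=0} \;=\; +\tfrac{3}{2}\,\bar{u}_{By}(x,0)\,U_y^2(x,0)\,W(x,0),
\end{align*}
which is exactly the good-signed trace piece of $\|U\|_{Y_{\frac12,0}}$. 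No external bound on $F(x,0)$ is needed; the trace emerges as a coercive term on the left. The paper's arrangement — differentiating first — makes this automatic, because the diffusive IBP directly produces the $u_{yy}U_yW|_{y=0}=3\bar{u}_{By}U_y^2W|_{y=0}$ boundary term and the forcing is already $\p_y F$, so there is never an $F(x,0)$ trace to absorb.
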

\begin{proof} We apply the multiplier $U_{y} \langle x \rangle^{1-2\sigma_\ast} z^{2m}$, for $m = 0,...,m_{max}$, to the vorticity formulation of the equation, which produces 
\begin{align} \n
&\int \p_y \mathcal{T}[U] U_{y} \langle x \rangle^{1-2\sigma_\ast} z^{2m} - \int \p_y^3 u U_y \langle x \rangle^{1-2\sigma_\ast}  z^{2m} \\ \label{stinson:1}
&+ \int \p_y (\bar{u}_{Byyy} q) U_y \langle x \rangle^{1-2\sigma_\ast}  z^{2m}  = \int \p_y F U_y \langle x \rangle^{1-2\sigma_\ast}  z^{2m}. 
\end{align}

We first address the terms arising from the $\p_y \mathcal{T}[U]$ contribution from \eqref{stinson:1}. Using the definition of $\mathcal{T}[U]$ from \eqref{def:Pr:T}, this generates
\begin{align} \n
\int \p_y \mathcal{T}[U] U_y \langle x \rangle^{1-2\sigma_\ast} z^{2m} = & \int |\bar{u}_B|^2 U_{xy} U_y \langle x \rangle^{1-2\sigma_\ast} z^{2m} +2 \int \bar{u}_B \bar{u}_{By} U_{x} U_y \langle x \rangle^{1-2\sigma_\ast} z^{2m}  \\ \n
& + \int (\bar{u}_{By} \bar{v}_B + \bar{v}_{By} \bar{u}_B) U_y^2 \langle x \rangle^{1-2\sigma_\ast} z^{2m}  + \int \bar{u}_B \bar{v}_B U_{yy} U_y  \langle x \rangle^{1-2\sigma_\ast} z^{2m} \\ \label{oklo:1}
& + \int \bar{u}_{Byyy} U U_y \langle x \rangle^{1-2\sigma_\ast} z^{2m}  + \int \bar{u}_{Byy} U_y^2 \langle x \rangle^{1-2\sigma_\ast} z^{2m} . 
\end{align}
The first term from \eqref{oklo:1} is energetic, and we rewrite it as 
\begin{align} \n
\int |\bar{u}_B|^2 U_{xy} U_y \langle x \rangle^{1-2\sigma_\ast} z^{2m}  = &\frac{\p_x}{2} \int |\bar{u}_B|^2 U_y^2 \langle x \rangle^{1-2\sigma_\ast} z^{2m} - \int \bar{u}_B \bar{u}_{Bx} U_y^2 \langle x \rangle^{1-2\sigma_\ast} z^{2m} \\ \label{lewis:1}
&- \frac{1-2\sigma_\ast + m}{2} \int |\bar{u}_B|^2 U_y^2 \langle x \rangle^{-2\sigma_\ast} z^{2m}.
\end{align}
We estimate the latter two quantities above upon integrating in $x$ by $\| U \|_{X_{0,m}}^2$, upon invoking the decay $|\bar{u}_{Bx}| \lesssim \langle x \rangle^{-1}$, according to \eqref{water:1} and \eqref{indu:1}. The same can be said for the third and sixth terms from \eqref{oklo:1}. For the second term from \eqref{oklo:1}, we estimate via 
\begin{align} \n
|\int \int \bar{u}_B \bar{u}_{By} U_{x} U_y \langle x \rangle^{1-2\sigma_\ast} z^{2m} | \lesssim & \| \bar{u}_{By} \langle x \rangle^{\frac 1 2}\|_\infty \| \bar{u}_B U_x \langle x \rangle^{\frac 1 2 - \sigma_\ast} z^m \| \| U_y \langle x \rangle^{-\sigma_\ast} z^m \| \\
\le & \delta \| U \|_{X_{\frac 1 2,m}}^2 + C_\delta \| U \|_{X_{0,m}}^2.
\end{align}
For the fourth term from \eqref{oklo:1}, we estimate via 
\begin{align} \n
|\int \int \bar{u}_B \bar{v}_B U_{yy} U_yu \langle x \rangle^{1-2\sigma_\ast} z^{2m}| \lesssim &\| \bar{v}_B \langle x \rangle^{\frac 1 2} \|_\infty \| \sqrt{\bar{u}_B} U_{yy} \langle x \rangle^{\frac 1 2 - \sigma_\ast} z^m \|_\infty \| U_y \langle x \rangle^{-\sigma_\ast} z^m \| \\
\le &C_\delta \| U \|_{X_{0,m}}^2 + \delta \| U \|_{Y_{\frac 12, m}}^2.
\end{align}
Finally, for the fifth term from \eqref{oklo:1}, we estimate via 
\begin{align} \n
|\int \int \bar{u}_{Byyy} U U_y \langle x \rangle^{1-2\sigma_\ast} z^{2m}| \lesssim & \| \bar{u}_{Byyy} z^{2m} \langle x \rangle^{\frac 3 2} \| \| U \langle x \rangle^{- \frac 1 2 - \sigma_\ast} \| \| U_y \langle x \rangle^{-\sigma_\ast} \| \\
\le & C_\delta \| U \|_{X_{0,0}}^2 + \delta \| U \|_{Y_{\frac 12, 0}}^2.
\end{align}

For the second term from \eqref{stinson:1}, we integrate by parts once in $y$ to obtain 
\begin{align} \n
- \int \p_y^3 u U_y \langle x \rangle^{1-2\sigma_\ast} z^{2m} = & \int u_{yy} U_{yy} \langle x \rangle^{1-2\sigma_\ast} z^{2m} + 2m \int u_{yy} U_y \langle x \rangle^{\frac 1 2-2\sigma_\ast} z^{2m-1} \\ \label{from:from}
&+ \delta_{m =0} u_{yy} U_y \langle x \rangle^{1-2\sigma_\ast}(x, 0). 
\end{align}
We first handle the boundary contribution from above, after integration in $x$
\begin{align} \label{rih:1}
\int u_{yy} U_y \langle x \rangle^{1-2\sigma_\ast}(x, 0) \ud x = \int 3 \bar{u}_{By} U_y^2(x, 0) \langle x \rangle^{1-2\sigma_\ast} \ud x. 
\end{align}

We next address the main term from \eqref{from:from}, for which we get 
\begin{align} \n
\int u_{yy} U_{yy} \langle x \rangle^{1-2\sigma_\ast} z^{2m} = &\int \bar{u}_B U_{yy}^2 \langle x \rangle^{1-2\sigma_\ast} z^{2m} + \int 3 \bar{u}_{Byy} U U_{yy} \langle x \rangle^{1-2\sigma_\ast} z^{2m} \\ \n
&+ \int 3 \bar{u}_{By} U_y U_{yy} \langle x \rangle^{1-2\sigma_\ast} z^{2m} + \int \bar{u}_{Byyy} q U_{yy} \langle x \rangle^{1-2\sigma_\ast} z^{2m} \\ \n
= & \int \bar{u}_B U_{yy}^2 \langle x \rangle^{1-2\sigma_\ast} z^{2m} + \int 3 \bar{u}_{Byy} U U_{yy} \langle x \rangle^{1-2\sigma_\ast} z^{2m} \\ \n
&- \int \frac 3 2 \bar{u}_{Byy} U_y^2 \langle x \rangle^{1-2\sigma_\ast} z^{2m} - \int 3m \bar{u}_{By} U_y^2 \langle x \rangle^{\frac 1 2-2\sigma_\ast} z^{2m-1} \\ \label{sc:1}
& - \delta_{m = 0} \frac 3 2 \bar{u}_{By} U_y(x, 0)^2 \langle x \rangle^{1-2\sigma_\ast} +  \int \bar{u}_{Byyy} q U_{yy} \langle x \rangle^{1-2\sigma_\ast} z^{2m}.
\end{align}

We now handle the middle term from \eqref{from:from}, which produces 
\begin{align} \n
2m \int u_{yy} U_y \langle x \rangle^{\frac 1 2-2\sigma_\ast} z^{2m-1} = &2m \int \bar{u}_B U_{yy} U_y \langle x \rangle^{\frac 1 2-2\sigma_\ast} z^{2m-1} + 6m \int \bar{u}_{B yy} U U_y\langle x \rangle^{\frac 1 2-2\sigma_\ast} z^{2m-1} \\ \label{sc:2}
& + 6m \int \bar{u}_{B y} U_y^2 \langle x \rangle^{\frac 1 2 - 2\sigma_\ast} z^{2m-1} + 2m \int \bar{u}_{B yyy} q U_y \langle x \rangle^{\frac 1 2 - 2\sigma_\ast} z^{2m-1}.
\end{align}

Finally, we handle the third term from \eqref{stinson:1}, which produces 
\begin{align}
\int \p_y (\bar{u}_{Byyy}q) U_y \langle x \rangle^{1-2\sigma_\ast} z^{2m} = - \int \bar{u}_{Byyy} q U_{yy} \langle x \rangle^{1-2\sigma_\ast} z^{2m} - 2m \int \bar{u}_{Byyy} q U_y \langle x \rangle^{\frac 1 2-2\sigma_\ast} z^{2m-1}.
\end{align}
These two contributions cancel out the last terms from \eqref{sc:1} and \eqref{sc:2}. The remaining error terms from \eqref{sc:1} - \eqref{sc:2} are all easily seen to be controlled by $\| U \|_{X_{0,m}}^2 + \| U \|_{X_{0,m-1}}^2$.

We now treat the terms from the forcing, $F^{(i)}$. In the case when $m = 0$, we have upon integration over $x$, 
\begin{align} \n
|\int \int \p_y F U_y \langle x \rangle^{1 - 2 \sigma_\ast} z^m| \lesssim \| \p_y F \langle x \rangle^{1-\sigma_\ast} z^m \|  \| U_y \langle x \rangle^{-\sigma_\ast} z^m \|.
\end{align}
To conclude the proof of the lemma, we integrate the identity \eqref{stinson:1} over $x \in [0, X_0]$ and then take the supremum over $X_0$. Upon doing so, the first term from \eqref{lewis:1}, the term \eqref{rih:1} and the fifth term from \eqref{sc:1}, and the first term from \eqref{sc:1} give the positive quantities we need to control, according to \eqref{Yh0:norm}. Upon using the elementary inequality $\sup_{X_0} |\int_0^{X_0} \int g \ud y \ud x| \le \int \int |g| \ud y \ud x$, we appeal to the estimation of the error terms above.  
\end{proof}

We can successively differentiate the system in $\p_x$ and re-apply Lemmas \ref{lem:LP:1}, \ref{lem:LP:2}, \ref{lem:LP:3} with minor modifications (estimating lower order commutators) in order to give 
\begin{lemma} \label{lemma:LP4} Assume the inductive hypotheses \eqref{indu:1} -- \eqref{indu:2}. Let $U$ be a solution to \eqref{Pr:af}. Fix $k \le k_{max}+1, 0 \le m \le m_{max}$. For any $0 < \delta << 1$, the following estimates are valid:
\begin{align} \label{k:again:1}
\| U  \|_{X_{k, m}}^2 \lesssim & \| U \|_{X_{k, m-1}}^2 +  \| \bar{u}_B \p_x^k U_0 y^m \|_{L^2_y}^2 +   \| \p_x^k F \langle x \rangle^{k + \frac 1 2- \sigma_\ast} z^m \|^2  + \| U \|_{\mathcal{X}_{\le k - \frac{1}{2}, m}}, \\ \n 
\|  U  \|_{X_{k + \frac 1 2, m }}^2 \lesssim & \| U \|_{X_{k,m}}^2 + \|  U  \|_{X_{k + \frac 1 2, m -1 }}^2 +  \delta \| U  \|_{X_{k+1,0}}^2 + \delta \| U \|_{Y_{k + \frac 1 2,0}}^2 \\  \label{k:again:2}
&  + \| \p_x^k F \langle x \rangle^{k + \frac 1 2- \sigma_\ast} z^m \|^2 + \| \sqrt{\bar{u}_B} \p_y \p_x^k U_0  y^m\|_{L^2_y}^2 + C_\delta \|U \|_{\mathcal{X}_{\le k, m}},\\ \n
\|  U \|_{Y_{k + \frac 1 2, m}}^2 \lesssim & \|  U \|_{X_{k,m}}^2 + \delta \|  U  \|_{X_{k + \frac 1 2, m}}^2 + \|  U \|_{X_{k, m-1}}^2 + \| \p_x^k \p_y F \langle x \rangle^{ k + 1 - \sigma_\ast} z^m \|^2 \\  \label{k:again:3}
&+ \| \bar{u}_B \p_y \p_x^k U_0 y^m \|_{L^2_y}^2 + C_\delta \|U \|_{\mathcal{X}_{\le k, m}}.
\end{align}
\end{lemma}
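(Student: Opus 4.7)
My plan is to apply $\p_x^k$ to the equation \eqref{Pr:af} and then re-execute, mutatis mutandis, the three multiplier arguments of Lemmas \ref{lem:LP:1}, \ref{lem:LP:2}, \ref{lem:LP:3} on the differentiated equation, with each multiplier's $x$-weight upgraded by $\langle x \rangle^{2k}$ to reflect the shift in the target norm. Concretely, for \eqref{k:again:1} I would test the $\p_x^k$-differentiated equation against $\p_x^k U \cdot \langle x \rangle^{2k - 2\sigma_\ast} z^{2m}$; for \eqref{k:again:2} against $\p_x^{k+1} U \cdot \langle x \rangle^{2k+1 - 2\sigma_\ast} z^{2m}$; and for \eqref{k:again:3} I would first apply one further $\p_y$ to the equation and then test against $\p_x^k U_y \cdot \langle x \rangle^{2k+1 - 2\sigma_\ast} z^{2m}$, exactly mirroring the vorticity-level multiplier from Lemma \ref{lem:LP:3}. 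In each case the algebraic identities developed in the base lemmas carry through verbatim to produce, on the left-hand side, the positive quantities defining $\|U\|_{X_{k,m}}$, $\|U\|_{X_{k+\frac{1}{2},m}}$, and $\|U\|_{Y_{k+\frac{1}{2},m}}$ from \eqref{X00:norm}--\eqref{Yhm:norm}, including the boundary contribution at $y=0$ in the $Y_{k+\frac{1}{2},0}$ case.

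The new terms relative to Lemmas \ref{lem:LP:1}--\ref{lem:LP:3} arise from commuting $\p_x^k$ past the coefficients of $\mathcal{T}[\cdot]$ and of $\bar{u}_{Byyy} q$. By Leibniz,
\begin{align*}
\p_x^k \mathcal{T}[U] = \mathcal{T}[\p_x^k U] + \sum_{j=0}^{k-1} \binom{k}{j} \Big( \p_x^{k-j}(\bar{u}_B^2)\, \p_x^{j+1} U + \p_x^{k-j}(\bar{u}_B \bar{v}_B)\, \p_x^j U_y + 2\,\p_x^{k-j}(\bar{u}_{Byy})\, \p_x^j U \Big),
\end{align*}
together with the analogous expansion for $\p_x^k(\bar{u}_{Byyy} q)$, where $\p_x^k q$ is expressed in terms of $\p_x^j U$ for $j \le k$ via \eqref{inv:u}--\eqref{inv:v}. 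The strong inductive hypotheses \eqref{indu:1}--\eqref{indu:2} give $\|\p_x^{k-j}\bar{u}_B\, \langle x \rangle^{k-j}\|_{L^\infty_y} + \|\p_x^{k-j}\bar{v}_B\, \langle x \rangle^{k-j+\frac{1}{2}}\|_{L^\infty_y} \lesssim 1$, and similarly for further $\p_y$-derivatives, so each commutator carries precisely the $x$-weight deficit needed for it to be controlled, via Cauchy--Schwarz and the Hardy inequality \eqref{Hardy:1:here}, by the lower-order aggregate norms $\|U\|_{\mathcal{X}_{\le k - \frac{1}{2}, m}}$ (for the $X_{k,m}$ estimate) and $\|U\|_{\mathcal{X}_{\le k, m}}$ (for the $X_{k + \frac{1}{2}, m}$ and $Y_{k + \frac{1}{2}, m}$ estimates). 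The remaining top-order cross-pairings, such as $\int \bar{u}_{By}\, \p_x^k U_y\, \p_x^{k+1} U$, are handled exactly as in \eqref{lights:1}--\eqref{lights:2} by the $\{z \le 1\}$ versus $\{z \ge 1\}$ dichotomy, producing the absorbable $\delta\,\|U\|_{X_{k+1,0}}$ and $\delta\,\|U\|_{Y_{k+\frac{1}{2},0}}$ contributions on the right-hand sides of \eqref{k:again:2}--\eqref{k:again:3}. Forcing and datum contributions are treated as in the base lemmas with $F \mapsto \p_x^k F$ and $U_0 \mapsto \p_x^k U_0$.

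The main technical obstacle, in my view, is the bookkeeping for the commutator between $\p_x^k$ and the non-local unknown $q$. One must verify that the delicate cancellation between $\int \bar{u}_{Byyy}\, \p_x^k q \cdot \p_x^k U$ and the non-local piece arising from integrating $-\p_y^2 \p_x^k u$ by parts --- which in the base cases produced the identities \eqref{q:term:1} and \eqref{qtermpr1} responsible for eliminating the otherwise uncontrolled $\bar{u}_{Byyyy}\, q^2$ term --- persists at all orders $k$, modulo remainders that decompose cleanly into products of $\p_x^{k-j}(\bar{u}_B$-coefficient$)$ and $\p_x^j q$ for $j \le k-1$. Each such remainder, after solving $\p_y (\p_x^j q) = \p_x^j u - \p_x^j(\bar{u}_{By}q)$ recursively and applying \eqref{Hardy:1:here}, will feed into $\|U\|_{\mathcal{X}_{\le k, m}}$ with the correct weight. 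Once this $q$-cancellation is tracked carefully and combined with the multiplier arguments above, the three estimates \eqref{k:again:1}--\eqref{k:again:3} follow.
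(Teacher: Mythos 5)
The paper provides no detailed proof of Lemma \ref{lemma:LP4}; it simply asserts that one should "successively differentiate the system in $\p_x$ and re-apply Lemmas \ref{lem:LP:1}, \ref{lem:LP:2}, \ref{lem:LP:3} with minor modifications (estimating lower order commutators)." Your proposal is exactly this approach, fleshed out with the correct multipliers, the Leibniz commutator structure, the use of \eqref{indu:1}--\eqref{indu:2} to absorb the coefficient derivatives into the aggregate lower-order norms, and a sensible identification of the one genuine bookkeeping concern (persistence of the $\bar u_{Byyyy}q^2$-type cancellation under $\p_x^k$); it matches the paper's (sketched) argument.
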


\section{Nonlinear Prandtl Layer, $i = 0$} \label{NL:Section}

We will establish the following proposition on the nonlinear Prandtl equations.

\begin{proposition} \label{Prop:nonlinear}Assume the data $U^0_p(y)$ satisfies \eqref{near:blasius}. Let $\ell_0$ be as in \eqref{near:blasius}. Then there exists a unique global solution to the Prandtl system which converges asymptotically to Blasius:
\begin{align} \label{pnld1}
\| \p_x^k \p_y^j ( \bar{u}^0_p - \bar{u}_\ast) \langle z \rangle^{m_0} \|_{L^\infty_y} &\lesssim \delta_\ast \langle x \rangle^{- \frac 1 4 - k - \frac j 2 + \sigma_\ast}, \\  \label{pnld2}
\| \p_x^k \p_y^j ( \bar{v}^0_p - \bar{v}_\ast) \langle z \rangle^{m_0} \|_{L^\infty_y} &\lesssim \delta_\ast \langle x \rangle^{- \frac 3 4 - k - \frac j 2 + \sigma_\ast},
\end{align}
for $2k + j \le \ell_0$.
\end{proposition}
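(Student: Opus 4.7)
The plan is to reduce Proposition \ref{Prop:nonlinear} to the abstract linearized Prandtl theory from Section \ref{section:PL}. The object to analyze is the rescaled difference $\tilde{u}^0_p = (\bar{u}^0_p - \bar{u}_\ast)/\delta_\ast$, $\tilde{v}^0_p = (\bar{v}^0_p - \bar{v}_\ast)/\delta_\ast$, which by \eqref{Pr:leading:1:1:1}--\eqref{Pr:leading:1:1:3} is exactly an instance of the abstract system \eqref{eq:abs:1:a}--\eqref{eq:abs:1:d} with background $\bar{u}_B = \bar{u}_\ast$, $\bar{v}_B = \bar{v}_\ast$ (the choice $\delta_{\ast\ast} = 0$), zero boundary datum, and forcing $F = \delta_\ast Q(\tilde{u}^0_p, \tilde{v}^0_p)$ quadratic in the unknown. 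With this choice, \eqref{indu:1}--\eqref{indu:2} hold trivially (the left-hand sides vanish), so Lemmas \ref{lem:LP:1}--\ref{lemma:LP4} are applicable. Introduce the good unknown $U = U[\bar{u}_\ast]$ from \eqref{good:variables}, whose initial datum is $O(1)$ in weighted $C^{\ell_0}$ by hypothesis \eqref{near:blasius}.

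The core of the argument is a continuity/bootstrap on a finite-$x$ version of the norms \eqref{X00:norm:finX}--\eqref{X0m:norm:finX}. Fix $X_\ast \in (0,\infty]$ and assume the bootstrap bound that $\|U\|_{\mathcal{X}_{\le k_{\max} + \frac 12, m_{\max}, X_\ast}} \le C_0$ for some absolute $C_0$; via the reconstruction formulas \eqref{inv:u}--\eqref{inv:v} applied with $\bar{u}_B = \bar{u}_\ast$ and Sobolev embedding in $y$, this translates into a control of $\|\tilde{u}^0_p, \tilde{v}^0_p\|_{X_{B, X_\ast}}$. Now apply Lemmas \ref{lem:LP:1}--\ref{lemma:LP4} iteratively in $(k, m)$: begin with \eqref{case:case:0:fin}--\eqref{case:case:m:fin} at $(0, 0)$ and $(0, m)$, add $x$-derivatives via Lemmas \ref{lem:LP:2}--\ref{lem:LP:3}, and climb to $(k, m)$ for $2k + m \le \ell_0$ via \eqref{k:again:1}--\eqref{k:again:3}. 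At each step the forcing norm $\|\p_x^k F \langle x\rangle^{k + 1/2 - \sigma_\ast} z^m\|$ and its $\p_y$-variant are estimated by distributing the product $Q = Q(\tilde u^0_p, \tilde v^0_p)$ as ``(weighted $L^\infty_y$) $\times$ (weighted $L^2_y$)'', with the first factor controlled by Sobolev from the bootstrap (and reconstruction) and the second absorbed into the currently controlled norms. The non-local $\tilde{v}^0_p = -\int_0^y \p_x \tilde{u}^0_p$ is handled by Hardy's inequality \eqref{Hardy:1:here}, which trades $y$-weights for derivatives. The crucial point is that every such forcing estimate has the explicit prefactor $\delta_\ast$, so the total contribution of the nonlinearity is $\le \delta_\ast \cdot C_0^2$; choosing $\delta_\ast$ small relative to universal constants absorbs this into half of $C_0$, improving the bootstrap. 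This closes uniformly in $X_\ast$ and hence gives global-in-$x$ control.

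Once the global weighted $L^\infty_x L^2_y$ bounds on $U$ and its derivatives are in hand, one reconstructs $\tilde u^0_p, \tilde v^0_p$ via \eqref{inv:u}--\eqref{inv:v} with the known smooth Blasius profile in the role of $\bar u_B$. The desired pointwise bounds \eqref{pnld1}--\eqref{pnld2} follow from the Sobolev interpolation $\|f\|_{L^\infty_y}^2 \lesssim \|f\|_{L^2_y} \|f_y\|_{L^2_y}$, where the self-similar scaling $y \sim \sqrt{\langle x\rangle}$ built into the weights $z = y/\sqrt{\langle x\rangle}$ delivers precisely the extra $\langle x \rangle^{-1/4}$ needed to pass from the $k$-th $X_{k,m}$-norm bound $\sim \langle x \rangle^{-k + \sigma_\ast}$ to the pointwise rate $\langle x \rangle^{-1/4 - k - j/2 + \sigma_\ast}$; $y$-derivatives cost $\langle x \rangle^{-1/2}$ each by the same scaling, which matches the $j/2$ exponent. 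The $z$-decay weight $\langle z \rangle^{m_0}$ is recovered by running the whole argument up to $m = m_0$ in the energy norms, compatible with the $z$-decay of the initial datum \eqref{near:blasius}. The final $\delta_\ast$ prefactor in \eqref{pnld1}--\eqref{pnld2} appears after undoing the rescaling $\bar{u}^0_p - \bar{u}_\ast = \delta_\ast \tilde{u}^0_p$.

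The main obstacle is the careful bookkeeping of the forcing norms coming from the quadratic term $Q(\tilde{u}^0_p, \tilde{v}^0_p)$ at high differential order, especially in Lemma \ref{lemma:LP4}. Each Leibniz expansion of $\p_x^k \p_y^j Q$ produces many terms, and one must (i) check that the total $x$-weight $\langle x \rangle^{k + 1/2 - \sigma_\ast}$ can be split between two factors whose individually controlled rates sum correctly, (ii) ensure the non-local $\tilde{v}^0_p$ factor is paired with one extra $z$-weight so that Hardy \eqref{Hardy:1:here} applies cleanly, and (iii) not lose the $\delta_\ast$ prefactor at any point (both to absorb into the bootstrap and to obtain the claimed $\delta_\ast$-smallness in \eqref{pnld1}--\eqref{pnld2}). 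Provided this accounting is executed consistently, the proof reduces to a mechanical application of the lemmas already established in Section \ref{section:PL}.
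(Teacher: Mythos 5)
Your overall plan---rescale to $[\tilde{u}^0_p, \tilde{v}^0_p]$, feed the system into the abstract framework of Section \ref{section:PL} with a finite-$x$ bootstrap, then reconstruct and apply Sobolev in $y$---is the correct skeleton and matches the paper's strategy up to the penultimate step. However, there is a genuine gap at the top $x$-derivative order that your argument does not address. You propose to apply Lemmas \ref{lem:LP:1}--\ref{lemma:LP4} uniformly with the Blasius background $\bar{u}_B = \bar{u}_\ast$ ($\delta_{\ast\ast}=0$), treating the full quadratic nonlinearity $F = \delta_\ast Q(\tilde{u}^0_p, \tilde{v}^0_p)$ as forcing. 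At the highest $x$-derivative level required by the norm $\mathcal{X}_{P_0}$ from \eqref{def:space:X:lin}, namely $X_{k_0+1, m}$, the estimate \eqref{k:again:1} demands control of $\|\p_x^{k_0+1} F \langle x\rangle^{k_0+\frac 32 - \sigma_\ast} z^m\|$. The Leibniz expansion of $\p_x^{k_0+1}(\tilde{u}^0_p\,\p_x\tilde{u}^0_p + \tilde{v}^0_p\,\p_y\tilde{u}^0_p)$ contains the term $\tilde{u}^0_p\,\p_x^{k_0+2}\tilde{u}^0_p$, which in terms of the good unknown involves $U^{(k_0+2)}$---one full derivative beyond what $\mathcal{X}_{P_0}$ controls. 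No amount of $\delta_\ast$-smallness, or of pairing factors in $L^\infty_y\times L^2_y$, cures a derivative loss; your "bookkeeping obstacle" paragraph gestures at the issue but proposes no mechanism to close it.

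The paper resolves this by \emph{changing the background at top order}: after differentiating \eqref{size:small:eq} $k_0+1$ times, it rewrites the equation as \eqref{id:1:1:id} with $[\bar{u}_B,\bar{v}_B] = [\bar{u}^0_p, \bar{v}^0_p]$ (the full nonlinear solution, not Blasius), so that the top-order quadratic contributions $\tilde{u}^0_p\,\p_x u^{(k_0+1)} + \tilde{v}^0_p\,\p_y u^{(k_0+1)}$ are absorbed into the transport operator $\mathcal{T}$ rather than sitting in the forcing; the residual forcing $F_{k_0+1}$ is then strictly lower order. This requires the modified good unknown $\mathcal{U}_{k_0+1}$ of \eqref{nlgu} and the mixed norm $\widetilde{\mathcal{X}}_{P_0}$ of \eqref{def:tilde:xP0}, together with the verification that the nonlinear background satisfies the bootstrap hypothesis \eqref{boots:2} (which is nontrivial precisely because $\delta_{\ast\ast}=\delta_\ast \ne 0$ in this step). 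Only the lower-order norms, $k\le k_0$, are closed with the Blasius background via \eqref{hghghg:2}; the top-order bound \eqref{hghghg:1} is obtained separately with the nonlinear background, and the two are combined in \eqref{conmap:1} before the continuation argument sends $X_\ast\to\infty$. Without this change of background at top order, your induction does not close.
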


In order to establish this proposition, we proceed in a few steps.  We define the rescaled perturbations 
\begin{align*}
\bar{u}^0_p = \bar{u}_\ast + \delta_{\ast} \tilde{u}^0_p, \qquad \bar{v}^0_p = \bar{v}_\ast + \delta_{\ast} \tilde{v}^0_p,
\end{align*}
which upon immediately relabeling $\tilde{u}^0_p \mapsto u$ and $\tilde{v}^0_p \mapsto \bar{v}$, satisfy the following system:
\begin{align} \label{size:small:eq}
&\bar{u}_\ast \p_x u + u \p_x \bar{u}_\ast + \bar{v}_\ast \p_y u + \bar{v} \p_y \bar{u}_\ast - u_{yy} = \delta_\ast Q(u, \bar{v}),
\end{align}
with initial datum 
\begin{align}
u|_{x=0} = \frac{1}{\delta_\ast} ( \bar{U}^0_p(y) - \bar{u}_\ast(0, y) ) =: u_0(y),
\end{align}
and quadratic nonlinearity $- Q(u, \bar{v}) := u \p_x u + \bar{v} \p_y u$. We note that the initial datum $u_0(y)$ is size $1$, but the nonlinearity is size $\delta_\ast$ in \eqref{size:small:eq}. 

We now define the norms in which we control the solution to \eqref{size:small:eq}. Specifically, we control the $\mathcal{X}_{P_0}$ norm as defined below, where we take the background flow to be $\bar{u}_\ast$ in all the definitions \eqref{X00:norm} -- \eqref{Yhm:norm}:
\begin{align} \label{def:space:X:lin}
\|U \|_{\mathcal{X}_{P_0}} := & \sum_{k = 0}^{k_{0}} \sum_{m = 0}^{m_0}  \|  U \|_{X_{k, m}[\bar{u}_\ast]} +  \|  U \|_{X_{k + \frac 1 2, m}[\bar{u}_\ast]} +  \| U \|_{Y_{k + \frac 1 2, m}[\bar{u}_\ast]}  + \sum_{m = 0}^{m_0}  \| U \|_{X_{k_0 + 1, m}[\bar{u}_\ast]}, \\
\|U \|_{\mathcal{X}_{P_0, X_{\ast}}} := & \sum_{k = 0}^{k_{0}} \sum_{m = 0}^{m_0}  \|  U \|_{X_{k, m, X_{\ast}}[\bar{u}_\ast]} +  \|  U \|_{X_{k + \frac 1 2, m,  X_{\ast}}[\bar{u}_\ast]} +  \| U \|_{Y_{k + \frac 1 2, m,  X_{\ast}}[\bar{u}_\ast]}  + \sum_{m = 0}^{m_0}  \| U \|_{X_{k_0 + 1, m,  X_{\ast}}[\bar{u}_\ast]}.
\end{align}

\begin{lemma} Assume \eqref{near:blasius}, and the parabolic compatibility conditions in Definition \ref{para:compat:def}. Then, the following estimate is valid:
\begin{align} \label{hghghg:2}
 \sum_{k = 0}^{k_0} \sum_{m = 0}^{m_0}  \|  U \|_{X_{k, m}[\bar{u}_\ast]}^2 +  \|  U \|_{X_{k + \frac 1 2, m}[\bar{u}_\ast]}^2 +  \| U \|_{Y_{k + \frac 1 2, m}[\bar{u}_\ast]}^2 \lesssim C_{Data} + \delta_{\ast} P( \| U \|_{\mathcal{X}_{P_0}} ),
\end{align}
where $P(\cdot)$ is a quartic polynomial. 
\end{lemma}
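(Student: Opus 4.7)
The plan is to apply the abstract linearized Prandtl machinery of Section~\ref{section:PL} with background $\bar{u}_B := \bar{u}_\ast$, so that $\delta_{\ast \ast} = 0$ and both the bootstrap \eqref{boots:1} and the strong inductive hypotheses \eqref{indu:1}--\eqref{indu:2} hold automatically from the known self-similar asymptotics of Blasius. The forcing in the good-variable equation \eqref{Pr:af} is then $F = \delta_\ast Q(u, \bar{v}) = -\delta_\ast (u u_x + \bar{v} u_y)$. Invoking Lemmas~\ref{lem:LP:1}--\ref{lemma:LP4} for every $(k,m)$ with $k \le k_0$ and $m \le m_0$, summing, and absorbing the universal small constants $\delta$ into the left-hand side, I obtain a master estimate whose right-hand side consists only of (i) initial-data contributions at $x = 0$ and (ii) weighted $L^2$-norms of $\p_x^k F$ and $\p_x^k \p_y F$. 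The extra slot $\|U\|_{X_{k_0+1,m}}$ in the definition \eqref{def:space:X:lin} of $\mathcal{X}_{P_0}$ will be used to absorb the top-regularity error terms discussed at the end.

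For the data, each quantity of the shape $\|\bar{u}_\ast|_{x=0} \p_x^j U_0 y^m\|_{L^2_y}$ or $\|\sqrt{\bar{u}_\ast}|_{x=0}\, \p_y \p_x^j U_0 y^m\|_{L^2_y}$ is converted, via the equation \eqref{size:small:eq} evaluated at $x = 0$, into a polynomial expression in pure $y$-derivatives of the initial perturbation $U_0$. This reduction is legitimate because of the parabolic compatibility assumption in Definition~\ref{para:compat:def}, and the resulting pure $y$-derivative norms are bounded by \eqref{near:blasius} together with smoothness of Blasius. All such contributions combine into the single constant $C_{Data}$.

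The heart of the proof is the forcing bound. Using the inversion formulae \eqref{inv:u}--\eqref{inv:v} I would rewrite $u = \bar{u}_\ast U + \bar{u}_{\ast y} q$ and $\bar{v} = \bar{u}_\ast V - \bar{u}_{\ast x} q$, then expand $\p_x^k(u u_x + \bar{v} u_y)$ and its $\p_y$-derivative via the Leibniz rule. Each resulting summand is a bilinear product in derivatives of $(U, V, q)$ multiplied by smooth Blasius factors with explicit self-similar decay. I would split every such summand into $L^\infty_y \times L^2_y$, placing the lower-order factor in $L^\infty_y$ through a one-derivative Sobolev embedding (whose ingredients lie inside $\mathcal{X}_{P_0}$), and the higher-order factor directly in one of the $L^2$-norms composing $\mathcal{X}_{P_0}$. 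The matching between Blasius self-similar rates and the $z^m$-weights and $\langle x\rangle$-powers required on the right-hand side of the abstract estimates is clean, yielding a bound of the form $\delta_\ast (\|U\|_{\mathcal{X}_{P_0}}^2 + \|U\|_{\mathcal{X}_{P_0}}^4)$, which gives the quartic polynomial $P$ after squaring.

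The \textbf{main obstacle} is the top-regularity case $k = k_0$ of Lemma~\ref{lemma:LP4}: the Leibniz expansion of $\p_x^{k_0} F$ contains a term of the schematic form $u \cdot \p_x^{k_0 + 1} u$, whose $L^2$ factor exceeds the regularity appearing in the conclusion of the present lemma. This is precisely why \eqref{def:space:X:lin} appends the extra slot $\|U\|_{X_{k_0 + 1, m}}$: that norm absorbs the top-order $L^2$ factor, while the paired $L^\infty_y$ factor on $u$ or $\bar{v}$ is controlled by the $k_0$-level norms via the Sobolev embedding together with the Hardy inequality \eqref{Hardy:1:here}. An analogous delegation handles $\p_x^{k_0} \p_y F$ in the $Y_{k_0 + \frac 1 2, m}$ estimate, where one $y$-derivative is split off either into the $Y$-norms or into the $X_{k_0+1, m}$ norm depending on which side of the product it lands. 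Verifying that the $z$-weights and $\langle x \rangle$-decays line up uniformly across every term of the Leibniz expansion is the only non-routine calculation; once that bookkeeping is done, the estimate \eqref{hghghg:2} follows immediately upon summing over $(k,m)$ and invoking the master inequality.
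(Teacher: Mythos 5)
Your proposal is correct and follows essentially the same route as the paper: take $\bar{u}_B = \bar{u}_\ast$ (so $\delta_{\ast\ast}=0$ and the bootstrap and strong inductive hypotheses hold trivially), apply the abstract estimates \eqref{k:again:1}--\eqref{k:again:3} with $F = \delta_\ast Q(u,\bar v)$, bound the data terms by $C_{Data}$ using the parabolic compatibility, and bound $\|\p_x^k Q\|$ and $\|\p_x^k\p_y Q\|$ by $\|U\|_{\mathcal{X}_{P_0}}^2$ via Sobolev embeddings to produce the quartic $\delta_\ast P(\|U\|_{\mathcal{X}_{P_0}})$. The paper's own proof is considerably terser (it simply cites the abstract lemmas and records the nonlinear forcing bound), whereas you spell out the Leibniz bookkeeping and correctly identify that the extra $X_{k_0+1,m}$ slot in $\mathcal{X}_{P_0}$ is what rescues the top-order terms that cannot be absorbed into the left-hand side.
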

\begin{proof} Motivated by \eqref{size:small:eq}, we define $[\bar{u}_B, \bar{v}_B] = [\bar{u}_\ast, \bar{v}_\ast]$ and $F := \delta_\ast Q(u, v)$. We now apply estimates \eqref{k:again:1} -- \eqref{k:again:3}, which can be applied since the background $[\bar{u}_\ast, \bar{v}_\ast]$ is trivially seen to satisfy the inductive hypotheses \eqref{indu:1} -- \eqref{indu:2} and the bootstrap hypothesis \eqref{boots:1}, as $\delta_{\ast \ast} = 0$ for this choice of background. Moreover, straightforward Sobolev embeddings establish the nonlinear estimates 
\begin{align}
\sum_{k = 0}^{k_0} \| \p_x^k Q(u, v) \langle x \rangle^{k + \frac 1 2 - \sigma_\ast} z^m \| + \| \p_x^k \p_y Q(u, v) \langle x \rangle^{k + 1 - \sigma_\ast} z^m \| \lesssim \| U \|_{\mathcal{X}_{P_0}}^2. 
\end{align}
\end{proof}

We now address the top order estimate, for which we apply Lemma \ref{lem:LP:1}, but with the choice of background flow $[\bar{u}_B, \bar{v}_B] := [\bar{u}_\ast + \delta_\ast \tilde{u}^0_p, \bar{v}_\ast + \delta_\ast \tilde{v}^0_p]$. Indeed, applying $\p_x^{k+1}$ to \eqref{size:small:eq}, we obtain the identity
\begin{align} \label{id:1:1:id}
\bar{u}_B \p_x u^{(k+1)} + v^{(k+1)} \p_y \bar{u}_B + u^{(k+1)} \p_x \bar{u}_B + \bar{v}_B \p_y u^{(k+1)} - \p_y^2 u^{(k+1)}= F_{k+1},   
\end{align}
where we define the forcing (that contains only lower order terms now)
\begin{align} \n
- F_{k+1} := & \sum_{j = 1}^{k} \binom{k+1}{j} ( \p_x^j \bar{u}_B \p_x^{k+1-j} u_{x} + \p_x^{k+1-j} \bar{u}_x \p_x^j u + \p_x^j \bar{v}_B \p_x^{k+1-j} u_y + \p_x^{k+1-j} \bar{u}_{\ast y} \p_x^j v ) \\
& + u \p_x^{k+2} \bar{u}_{\ast} + u_y \p_x^{k+1} \bar{v}_{\ast} + v \p_y \bar{u}_{\ast}^{(k+1)} + u_x \p_x^{k+1} \bar{u}_{\ast}.
\end{align}
Motivated by \eqref{id:1:1:id}, we define 
\begin{align} \label{nlgu}
\mathcal{U}_{k+1} := \p_y \{ \frac{\psi^{(k+1)}}{\bar{u}_B} \} = \frac{1}{\bar{u}_B} \Big( u^{(k+1)} - \frac{\bar{u}_{By}}{\bar{u}_B} \psi^{(k+1)} \Big).
\end{align}
Due to the norms themselves depending on the background flow, we define the modified norms:
\begin{align} \label{def:tilde:xP0}
\| U \|_{\widetilde{\mathcal{X}}_{P_0}}^2 := &  \|  \mathcal{U}_{k+1} \|_{X_{0, m}[\bar{u}_B]}^2 +  \sum_{k = 0}^{k_0} \sum_{m = 0}^{m_0}  \|  U \|_{X_{k, m}[\bar{u}_\ast]}^2 +  \|  U \|_{X_{k + \frac 1 2, m}[\bar{u}_\ast]}^2 +  \| U \|_{Y_{k + \frac 1 2, m}[\bar{u}_\ast]}^2, \\
\| U \|_{\widetilde{\mathcal{X}}_{P_0, X_{\ast}}}^2 := &  \|  \mathcal{U}_{k+1} \|_{X_{0, m, X_{\ast}}[\bar{u}_B]}^2 +  \sum_{k = 0}^{k_0} \sum_{m = 0}^{m_0}  \|  U \|_{X_{k, m, X_{\ast}}[\bar{u}_\ast]}^2 +  \|  U \|_{X_{k + \frac 1 2, m, X_{\ast}}[\bar{u}_\ast]}^2 +  \| U \|_{Y_{k + \frac 1 2, m, X_{\ast}}[\bar{u}_\ast]}^2.
\end{align}

We are now ready to prove Proposition \ref{Prop:nonlinear}.
\begin{proof}[Proof of Proposition \ref{Prop:nonlinear}]  This follows by repeating essentially identically the proof of Lemma \ref{lem:LP:1} on the system \eqref{id:1:1:id}, with $k = k_0$. More precisely, we proceed via a continuation argument. Fix any $0 < X_{\ast} < \infty$. Applying \eqref{case:case:m:fin}, we obtain the bound  
\begin{align} \n
\sum_{m = 0}^{m_0} \|  \mathcal{U}_{k_0+1} \|_{X_{0, m, X_{\ast}}[\bar{u}_B]}^2 \lesssim & C_{Data} + \sum_{m = 0}^{m_0} \| F_{k_0+1} \langle x \rangle^{k_0 + \frac 3 2 - \sigma_{\ast}} z^m \|_{L^2_x(0, X_{\ast}) L^2_y}^2 \\ \n
\lesssim & C_{Data} +  \sum_{k = 0}^{k_0} \sum_{m = 0}^{m_0}  \|  U \|_{X_{k, m}[\bar{u}_\ast]}^2 +  \|  U \|_{X_{k + \frac 1 2, m}[\bar{u}_\ast]}^2 +  \| U \|_{Y_{k + \frac 1 2, m}[\bar{u}_\ast]}^2 \\ \label{hghghg:1}
& + \delta_{\ast} P( \| U \|_{\mathcal{X}_{P_0, X_{\ast}}} ).
\end{align}
Adding together \eqref{hghghg:2} and \eqref{hghghg:1}, we obtain
\begin{align} \label{conmap:1}
\| U \|_{\widetilde{\mathcal{X}}_{P_0, X_{\ast}}}^2 \lesssim &C_{Data}  + \delta_{\ast} P( \| U \|_{\mathcal{X}_{P_0, X_{\ast}}} )
\end{align}
Consulting now the definitions  \eqref{def:space:X:lin}, \eqref{nlgu}, and \eqref{def:tilde:xP0}, it is easy to see that $\| U \|_{\mathcal{X}_{P_0, X_{\ast}}} \lesssim \| U \|_{\widetilde{\mathcal{X}}_{P_0, X_{\ast}}}$. Consulting the definition \eqref{norm:B:def} and applying a standard Sobolev interpolation gives that $\| u, v \|_{X_{B, X_{\ast}}} \lesssim \| U \|_{\mathcal{X}_{P_0, X_{\ast}}}$. Therefore, estimate \eqref{conmap:1} implies that the $\delta_{\ast \ast}^{- \frac 1 2}$ required in \eqref{boots:2} continues to hold with an improved factor of $\delta_{\ast \ast}^{- \frac 14}$. By a standard continuation argument, we can send $X_{\ast} \rightarrow \infty$ to obtain the bound 
\begin{align} \label{dogs:1}
\| U \|_{\mathcal{X}_{P_0}}^2 \lesssim &C_{Data} 
\end{align}
From here, the claimed estimates \eqref{pnld1} -- \eqref{pnld2} again follow. 
\end{proof}

\section{Intermediate Prandtl Layers, $i = 1, \dots, N_1 - 1$} \label{Section:LinPRA}

We are now ready to consider the intermediate Prandtl terms in the approximate solution. Specifically, we consider the following problem 
\begin{align} \label{eq:lin:Pr}
&\bar{u}^0_p \p_x u^i_p + \bar{u}^0_{px} u^i_p + \bar{v}^0_p \p_y u^i_p + \bar{v}^i_p \p_y \bar{u}^0_p - \p_y^2 u^i_p = F^{(i)}_p, \\
&v^i_p =  \int_y^\infty \p_x u^i_p, \qquad \bar{v}^i_p = v^i_p - v^i_p|_{y = 0},
\end{align}
where we will fix for now $i = 1,...,N_1 - 1$, as the $i = N_1$ case needs to be modified slightly, which is treated in the next section below. These equations are supplemented with the boundary conditions 
\begin{align} \label{LP:BC:1}
u^i_p|_{y = 0} = - u^i_E|_{y = 0}, \qquad u^i_p|_{y \rightarrow \infty} = 0, \qquad u^i_p|_{x = 0} = U^i_p(y). 
\end{align}
The forcing $F_p^{(i)}$ in \eqref{eq:lin:Pr} is defined in \eqref{Fpi}. We recall the hypothesis on the initial datum, $U^i_p(y)$, is given by \eqref{hyp:1}. In addition, we will assume inductively that the boundary layer profiles for $j = 0, ..., i-1$ and the Euler profiles for $j = 0, ..., i$ have already been constructed to satisfy the estimates in Theorem \ref{thm:approx}. Our main proposition here is that the estimates in Theorem \ref{thm:approx} continue to hold for $j = i$, thereby verifying the induction. 
\begin{proposition} \label{prop:PrLin} Given initial datum $U^i_p(y)$ which satisfies \eqref{hyp:1} and the parabolic compatibility conditions in Definition \ref{para:compat:def}, there exists a unique solution to \eqref{eq:lin:Pr} for all $x > 0$, and moreover which satisfies the following estimates, for $2k + j \le \ell_i$,  
\begin{align} \label{water:4}
&\| \langle z \rangle^{m_i} \p_x^k \p_y^j u_p^{i} \|_{L^\infty_y} \lesssim \langle x \rangle^{- \frac 1 4 - k - \frac j 2 + \sigma_\ast} \\ \label{water:5}
&\| \langle z \rangle^{m_i} \p_x^k \p_y^j v_p^{i} \|_{L^\infty_y} \lesssim \langle x \rangle^{- \frac 3 4 - k - \frac j 2 + \sigma_\ast}.
\end{align}
\end{proposition}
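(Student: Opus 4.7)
The plan is to view \eqref{eq:lin:Pr} as an instance of the abstract linearized Prandtl system \eqref{eq:abs:1:a}--\eqref{eq:abs:1:d} with background $[\bar{u}_B, \bar{v}_B] := [\bar{u}^0_p, \bar{v}^0_p]$ (so $\delta_{\ast\ast} = \delta_\ast$), apply the cascade of weighted energy estimates in Lemma \ref{lemma:LP4}, and close an induction on $i$ by using the already established bounds for layers $0, 1, \dots, i-1$ to control the forcing $F^{(i)}_p$ and the non-homogeneous boundary trace $-u^i_E|_{y=0}$.

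First I would homogenize the boundary condition. Since the abstract framework demands $u|_{y=0} = 0$, I introduce
\[
\tilde{u}^i_p(x, y) := u^i_p(x, y) + u^i_E(x, 0)\, \chi(z),
\]
with $\chi$ from \eqref{def:chi}, so that $\tilde{u}^i_p|_{y = 0} = 0$. The resulting equation for $\tilde{u}^i_p$ has the same principal structure but with a modified forcing
\[
\widetilde{F}^{(i)}_p := F^{(i)}_p + \bigl[\bar{u}^0_p \p_x + \bar{u}^0_{px} + \bar{v}^0_p \p_y - \p_y^2\bigr]\bigl(u^i_E(x, 0) \chi(z)\bigr) + (\text{div-free corrections}),
\]
and a modified datum whose parabolic compatibility at $y = 0$ follows from that assumed on $U^i_p$. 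The Euler trace decay \eqref{water:88} gives $|u^i_E(\cdot, 0)| \lesssim \langle x \rangle^{-1/2}$, which is strong enough that every commutator term inherits the $x$-decay and $z$-weight demanded by the forcing norms of Lemma \ref{lemma:LP4}. Proposition \ref{Prop:nonlinear}, proved in the previous section, guarantees that the background $[\bar{u}^0_p, \bar{v}^0_p]$ satisfies the strong inductive bounds \eqref{indu:1}--\eqref{indu:2} and the bootstrap \eqref{boots:1} provided $\delta_\ast$ is small, so the full machinery of Lemmas \ref{lem:LP:1}--\ref{lemma:LP4} is indeed available.

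Next I would estimate $\widetilde{F}^{(i)}_p$ using the inductive hypothesis that \eqref{water:65}--\eqref{water:88} hold for indices $0, \dots, i-1$. Inspecting the explicit expression \eqref{Fpi} (derived in Appendix \ref{app:A}), each summand is either a quadratic product of lower-order Prandtl velocities, a coupling of a lower-order Prandtl velocity with an Euler factor restricted to the scale $Y = \sqrt{\eps} y$, or a viscous commutator, and each such term comes with a power of $\sqrt{\eps}$ that exactly compensates the $\sqrt{\eps}$ absorbed when Taylor-expanding Euler quantities around $Y = 0$. Counting powers of $\langle x \rangle$ carefully using \eqref{blas:conv:1}--\eqref{water:88} gives, for all $2k + j + 2m \le \ell_i$,
\[
\| \p_x^k \p_y^j \widetilde{F}^{(i)}_p\, \langle x \rangle^{k + \tfrac{j+1}{2} - \sigma_\ast} z^m \| \lesssim 1.
\]
With this bound in hand, I pass to the good unknown $U := U[\bar{u}^0_p]$ of \eqref{good:variables}, write the equation as \eqref{Pr:af}, and apply Lemma \ref{lemma:LP4} iteratively: for each $(k, m)$ with $0 \le k \le k_{max}+1$ and $0 \le m \le m_i$, the bounds \eqref{k:again:1}--\eqref{k:again:3} express $\| U \|_{X_{k,m}}, \| U \|_{X_{k+1/2,m}}, \| U \|_{Y_{k+1/2,m}}$ in terms of strictly lower-order norms (already controlled inductively), the forcing, the datum, and an arbitrarily small multiple of the top-order quantity, which is absorbed into the left. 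This yields control of $\|U\|_{\mathcal{X}_{\le k_{max}+1,m_i}}$. Inverting the transformation \eqref{inv:u}--\eqref{inv:v} recovers $\tilde{u}^i_p$ and $\tilde{v}^i_p$ from $U$ and its stream function; undoing the homogenization then returns $u^i_p, v^i_p$. Finally, one-dimensional Sobolev embedding in $y$ (after trading $k$ derivatives in $x$ for $2k$ derivatives in $y$ via the equation and interpolating against the $z$-weighted norms) promotes the weighted $L^2$ bounds to the pointwise estimates \eqref{water:4}--\eqref{water:5}. Existence and uniqueness follow from a standard parabolic approximation scheme (regularize by $\delta \p_x^2$ and pass to the limit using the $\delta$-independent a priori estimates just described), with uniqueness being an immediate consequence of the $X_{0,0}$ estimate \eqref{case:case:0}.

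The main obstacle is the bookkeeping at the top of the cascade: the forcing $F^{(i)}_p$ contains terms at the scale of $Y$ (Euler-type) that formally lose $z$-decay when restricted to the layer, and the induction loses five indices of regularity and five indices of $z$-weight at each step (accounting for the admissible losses $\ell_{i+1} = \ell_i - 5$, $m_{i+1} = m_i - 5$ built into the hypothesis \eqref{hyp:1}). What makes the scheme close is the structural fact that each Euler factor comes multiplied by $\sqrt{\eps}$ to a power high enough to offset the absence of $z$-decay; combined with the strong $\langle x \rangle$-decay inherited inductively and the favorable damping structure in \eqref{ident:trans:energy}--\eqref{damping:1}, this produces the claimed $\langle x \rangle^{-1/4 - k - j/2 + \sigma_\ast}$ rates with only a small $\sigma_\ast$-loss.
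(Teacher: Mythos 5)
Your high-level structure is the same as the paper's: homogenize the boundary condition, estimate the forcing $F^{(i)}$ using the inductive bounds on the lower layers, run the weighted energy cascade of Lemma~\ref{lemma:LP4} with background $[\bar{u}^0_p,\bar{v}^0_p]$ (justified by Proposition~\ref{Prop:nonlinear}), and then pass from the $\mathcal{X}$-norms to the pointwise estimates \eqref{water:4}--\eqref{water:5} by a weighted Sobolev embedding that trades $x$-derivatives for $y$-derivatives through the equation. That is exactly the path taken in Section~\ref{Section:LinPRA}, and your bookkeeping of the forcing decay (enough to make $\|F^{(i)}\langle x\rangle^{1/2-\sigma_\ast}z^m\|$ finite, cf.~\eqref{forcing:est:PR:2}--\eqref{forcing:est:Pr:lin}) is consistent with what the paper proves.

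There is, however, a genuine gap in your homogenization step. You propose the lift $u^i_E(x,0)\,\chi(z)$ with $\chi$ the cut-off of \eqref{def:chi}. The paper instead uses a profile $\phi$ that shares the normalization $\phi(0)=1$ and compact support, but is crucially \emph{mean-zero}, $\int_0^\infty\phi\,dz=0$. The reason this matters: the divergence-free $v$-correction accompanying the lift is
\[
-\int_0^y \p_x\bigl(\phi(z)\,u^i_E(x,0)\bigr)\,dy' = -u^i_{Ex}(x,0)\sqrt{\langle x\rangle}\int_0^z\phi(w)\,dw + \tfrac{1}{2\langle x\rangle^{1/2}}u^i_E(x,0)\int_0^z w\,\phi'(w)\,dw,
\]
and both integrals tend to $0$ as $z\to\infty$ \emph{only because} $\int_0^\infty\phi=0$ and $\int_0^\infty w\phi'(w)\,dw=-\int_0^\infty\phi=0$. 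This guarantees $\bar{v}^i|_{y=\infty}=\bar{v}^i_p|_{y=\infty}$, so that subtracting the far-field value produces a divergence-free pair satisfying all the boundary conditions of the abstract problem \eqref{eq:abs:1:a}--\eqref{eq:abs:1:d}. With $\chi$ (which is nonnegative and equal to $1$ on $[0,1]$, hence $\int_0^\infty\chi>0$), the first correction has a nonzero, $y$-constant limit of size $\sim\langle x\rangle^{-1}$ at $y=\infty$, so the lifted $\bar{v}$ does not agree with $\bar{v}^i_p$ at infinity. You would then need an additional $y$-uniform corrector (which would again spoil the divergence-free pairing with the $u$-lift) or a modified abstract framework allowing nonvanishing far-field data. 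The fix is simple---replace $\chi$ with a mean-zero $\phi$---but as written your homogenization does not produce the structure the energy estimates require, and ``(div-free corrections)'' as a placeholder does not address the far-field mismatch. The rest of the argument (forcing estimates, the $X_{k,m}/Y_{k+1/2,m}$ cascade, the embedding, the Galerkin/regularization construction) is essentially identical to the paper's.
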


\subsection{Homogenizations}

In this subsection, we will manipulate \eqref{eq:lin:Pr} so that the results from Section \ref{section:PL} can be used. First, we  homogenize the boundary condition at $Y = 0$ in \eqref{LP:BC:1} by introducing the following quantities
\begin{align} \label{def:def:LP}
u^i := & u^i_p - \phi(z) u^i_p(x, 0) = u^i_p + \phi(z) u^i_E(x, 0), \\ \label{def:def:LP2}
 \bar{v}^i := & \bar{v}^i_p - u^i_{Ex}(x, 0) \langle x \rangle^{\frac 1 2} \int_0^z \phi(w) \ud w + \frac{1}{2\langle x \rangle^{\frac 1 2}} u^i_E(x, 0) \int_0^z w \phi'(w) \ud w, \\ \label{def:def:LP:v}
 v^i := &\bar{v}^i - \bar{v}^i(x, \infty). 
\end{align}
$\phi(\cdot)$ is a $C^\infty$ function satisfying the following properties: $\phi(0) = 1$, $\phi$ is supported on $[0, 1)$, and $\int_0^\infty \phi(z) \ud z = 0$. First, we note that the pair defined above is divergence-free. Second, we note that the integration by parts identity 
\begin{align}
\int_0^\infty w \phi'(w) \ud w = - \int_0^\infty \phi(w) \ud w + w \phi(w)|_{w = \infty} - w\phi(w)|_{w = 0} = 0, 
\end{align}
by definition of $\phi$ guarantees that $\bar{v}^i|_{y = \infty} = \bar{v}^i_p|_{y = \infty}$. 

We now take the notational convention of dropping the superscript $i$ for this section, as $i$ will be fixed, so: 
\begin{align*}
[u, v] := [u^i, v^i]. 
\end{align*}
By the specification in \eqref{def:def:LP}, we get the homogenized system for $[u, v]$, which we will analyze,
\begin{align} \label{eq:lin:Pr:hom}
&\bar{u}^0_p u_x + \bar{u}^0_{px} u + \bar{v}^0_p \p_y u + \bar{v} \p_y \bar{u}^0_p - u_{yy} = F^{(i)}, \\
&u_x + v_y = 0, \\
&u|_{y = 0} = \bar{v}|_{y = 0} = u|_{y = \infty}  = 0, \\ \label{eq:compat:initial:1}
&u|_{x = 0} = U^i_p(y) + \phi(y) u^i_E(0, 0) =: u_0(y).
\end{align}
Above, the new forcing is defined to be $F^{(i)} = F^{(i)}_p + H^{(i)}_p$, where $H^{(i)}_p$ contains those terms coming from the homogenization process, and is specifically given by
\begin{align}\n
H^{(i)}_p := &\phi(z) \bar{u}^0_p u^i_{Ex}(x, 0) + \phi(z) \bar{u}^0_{px} u^i_E(x, 0) + \langle x \rangle^{- \frac 1 2} \bar{v}^0_p \phi'(z) u^i_E(x, 0)  \\ \n
& - \bar{u}^0_{py} u^i_{Ex}(x, 0) \langle x \rangle^{\frac 1 2} \int_0^z \phi(w) \ud w + \bar{u}^0_{py}\frac{1}{2 \langle x \rangle^{\frac 1 2}} u^i_E(x, 0) \int_0^z w \phi'(w) \ud w \\ \label{def:Hip}
&- \frac{1}{2\langle x \rangle} \bar{u}^0_p \phi'(z) u_E(x, 0) - \frac{1}{\langle x \rangle} \phi''(z) u_E(x, 0).  
\end{align}

We now note that \eqref{eq:lin:Pr:hom} -- \eqref{eq:compat:initial:1} is of the form \eqref{eq:abs:1:a} -- \eqref{eq:abs:1:d}. In this case, we use as the background profile $[\bar{u}_B, \bar{v}_B] := [\bar{u}^0_p, \bar{v}^0_p]$.

\subsection{Estimates on the Forcing}

We will need estimates on the forcing, $F^{(i)}$.
\begin{lemma} The forcing $F^{(i)} = F_p^{(i)} + H_p^{(i)}$ satisfies the following estimates  
\begin{align} \label{forcing:est:PR:2} 
\| (y \p_y)^k (x \p_x)^j F^{(i)} \langle z \rangle^M \|_{L^\infty_y} &\lesssim \langle x \rangle^{- \frac{23}{16}}  \\ \label{forcing:est:Pr:lin}
\| (y \p_y)^k (x \p_x)^j F^{(i)} \langle z \rangle^M \|_{L^2_y} &\lesssim \langle x \rangle^{- \frac{19}{16}},
\end{align}
for $M \le m_i$, $k + 2j \le \ell_i$. 
\end{lemma}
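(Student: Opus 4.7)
I would split the forcing as $F^{(i)}=F^{(i)}_p+H^{(i)}_p$ and estimate each piece separately. For the homogenization piece $H^{(i)}_p$, I read off the six summands in the explicit formula \eqref{def:Hip}. Each term factors as (a) a cutoff-like factor $\phi(z)$, $\phi'(z)$, $\phi''(z)$ or a bounded $z$-antiderivative of $\phi$, all compactly supported in $z$; (b) a background factor among $\bar u^0_p$, $\bar u^0_{px}$, $\bar u^0_{py}$, $\bar v^0_p$, whose pointwise bounds come from Proposition \ref{Prop:nonlinear} combined with Blasius scaling, namely $1$, $\langle x\rangle^{-1}$, $\langle x\rangle^{-1/2}$, $\langle x\rangle^{-1/2}$ respectively; and (c) an Euler trace $u^i_E(x,0)$ or $u^i_{Ex}(x,0)$ bounded by $\langle x\rangle^{-1/2}$ and $\langle x\rangle^{-3/2}$ via \eqref{water:88} for the index $i$. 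Multiplying these factors, every term is pointwise $\lesssim \langle x\rangle^{-3/2}$, strictly better than $\langle x\rangle^{-23/16}$.

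For the core piece $F^{(i)}_p$, I would invoke its explicit form in \eqref{Fpi} of Appendix \ref{app:A}, which is a sum of bilinear expressions in lower-order boundary-layer profiles $[u^j_p,v^j_p]$ with $j\le i-1$ and Euler profiles $[u^j_E,v^j_E]$ with $j\le i$, together with Taylor-remainder factors in $Y$ produced by rewriting the Euler profiles at the Prandtl scale $y=Y/\sqrt{\eps}$. All these ingredients satisfy the inductive bounds \eqref{water:65}--\eqref{water:88} for indices strictly below~$i$. The slowest-decaying contribution comes from the lowest-order Prandtl-Prandtl bilinear combinations of type $u^{j_1}_p\p_x u^{j_2}_p$ or $v^{j_1}_p\p_y u^{j_2}_p$, which decay like $\langle x\rangle^{-1/4+\sigma_\ast}\cdot\langle x\rangle^{-5/4+\sigma_\ast}=\langle x\rangle^{-3/2+2\sigma_\ast}$. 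Since $2\sigma_\ast=2\cdot 10^{-4}\ll 1/16$, this beats $\langle x\rangle^{-23/16}$; Euler-Prandtl and Euler-Euler products decay strictly faster because each Euler factor, once Taylor-expanded at the boundary-layer scale, carries an additional $\sqrt{\eps}$ or an extra $\p_Y$ converting to $\langle x\rangle^{-1}$ decay.

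To handle the scaled derivatives $(y\p_y)^k(x\p_x)^j$ I would use that for functions of $(x,z)$ the operators $y\p_y$ and $x\p_x$ act as $y\p_y f=zf_z$ and $x\p_x f=xf_x-\tfrac12 zf_z$, so they preserve the $x$-decay rate and only redistribute $z$-weights. The exogenous weight $\langle z\rangle^M$ with $M\le m_i$ is then absorbed into the generous $\langle z\rangle^{m_j}$ decay available inductively for each factor, since the parameters $m_j$ were chosen large and strictly decreasing with~$j$. Finally, for the $L^2_y$ bound, every term in $F^{(i)}$ either carries a compactly-supported-in-$z$ cutoff (from $H^{(i)}_p$) or a rapidly decaying $\langle z\rangle^{-M'}$ weight (from $F^{(i)}_p$), so integrating in $y$ produces a factor $\lesssim\langle x\rangle^{1/4}$ via the change of variables $y=\sqrt{\langle x\rangle}\,z$, upgrading the $L^\infty_y$ rate $\langle x\rangle^{-23/16}$ to the $L^2_y$ rate $\langle x\rangle^{-23/16+1/4}=\langle x\rangle^{-19/16}$ claimed in \eqref{forcing:est:Pr:lin}.

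\emph{Expected main obstacle.} The delicate step is the bookkeeping in $F^{(i)}_p$: one must enumerate all bilinear terms carefully, identify the unique worst-decaying combination, and verify that the $\sqrt{\eps}$-Taylor residuals from Euler terms are compatible with the $\langle z\rangle$-decay needed. All of this is purely inductive, but it is what forces the choice $\sigma_\ast=10^{-4}$ so that $-3/2+2\sigma_\ast<-23/16$ holds with room to spare, and it is also why the regularity/decay indices $\ell_i,m_i$ were taken large enough to absorb the weight $\langle z\rangle^M$ and the $2k+j$ derivatives after the differentiation in Step 3.
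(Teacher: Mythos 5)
Your proposal matches the paper's argument in all essential respects: the same split $F^{(i)}=F_p^{(i)}+H_p^{(i)}$, the same term-by-term pointwise bookkeeping of $H_p^{(i)}$ (each of the seven summands in \eqref{def:Hip} decays like $\langle x\rangle^{-3/2}$ after combining background decay with the Euler-trace bounds \eqref{water:88}), the same inductive bilinear estimates on $F_p^{(i)}$ with the Prandtl--Prandtl products as the worst terms at rate $\langle x\rangle^{-3/2+2\sigma_\ast}$, and the same $\langle x\rangle^{1/4}$ conversion between $L^\infty_y$ and $L^2_y$ exploiting the $z$-localization. The only point you gloss over, and which the paper addresses separately, is the pressure contribution $\eps^{-i/2}\p_x P^{(i)}_p$ in \eqref{Fpi}, which is not purely bilinear but involves the vertical integral $\int_y^\infty(\cdots)\,\ud y'$ from \eqref{def:PPi}; the paper bounds it by a weighted $L^\infty$ estimate on $\p_x\mathcal{H}_p^{(i)}$, $\p_x\mathcal{J}_p^{(i)}$ and $\p_x\Delta_\eps v^{i-1}_p$, paying a benign $\langle x\rangle^{\kappa/2}$ loss that still comfortably closes -- this does not change your conclusion but deserves explicit mention.
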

\begin{proof} We first estimate the terms in $H_p^{(i)}$, which has been defined in \eqref{def:Hip}. First, due to the localization in $z$, we have immediately that $\| H^{(i)}_p \|_{L^2_y} \lesssim \langle x \rangle^{\frac 1 4} \| H^{(i)}_p \|_{L^\infty_y}$. 
\begin{align} \n
\|H_p^{(i)}\|_{L^\infty_y} \lesssim &|u^i_{Ex}(x, 0)| + \|\bar{u}^0_{px}\|_{L^\infty_y} |u^i_{E}(x, 0)| + \|\bar{v}^0_p\|_{L^\infty_y} |u^i_E(x, 0)| \langle x \rangle^{- \frac 1 2} \\ \n
&+ \| \bar{u}^0_{py} \|_\infty |u^i_{Ex}(x, 0)| \langle x \rangle^{\frac 1 2} + \langle x \rangle^{- \frac 1 2} \| \bar{u}^0_{py} \|_\infty |u^i_E(x, 0)| \\  \n
& + \frac{1}{\langle x \rangle} |u_E(x, 0)| + |u_{Ex}(x, 0)|\\
\lesssim & \langle x \rangle^{- \frac 3 2} + \langle x \rangle^{- \frac 3 2} + \langle x \rangle^{-\frac 3 2} + \langle x \rangle^{-\frac 3 2} + \langle x \rangle^{-\frac 3 2} +  \langle x \rangle^{-\frac 3 2} + \langle x \rangle^{-\frac 3 2}  \lesssim \langle x \rangle^{-\frac 3 2},
\end{align}
where we have invoked estimates \eqref{blas:conv:1} - \eqref{blas:conv:2} for the estimates on $\bar{u}^0_p, \bar{v}^0_p$, and \eqref{water:78} - \eqref{water:88} for estimates on $u^i_E$ (and derivatives thereof).

We now address the terms in $F_p^{(i)}$, which has been defined in \eqref{Fpi}. First, according to the definition \eqref{Fpi:cal}, we have 
\begin{align} \n
\| \eps^{- \frac i 2} \mathcal{F}_p^{(i)} \|_{L^2_y} \lesssim & \sum_{j = 1}^{i}   \| \p_x u_p^{i-1} \|_{L^2_y} \| u^j_E \|_{L^\infty_y} + \|u^i_E\|_{L^\infty_y} \sum_{k = 0}^{i-2} \| \p_x u^k_p \|_{L^2_y} + \| u^{i-1}_p \|_{L^2_y} \sum_{j = 1}^{i} \| \p_x u^j_E\|_{L^\infty_y} \\ \n
& + \| \p_x u^i_E \|_\infty \sum_{j = 0}^{i-2} \| u^j_p \|_{L^2_y} + \| u^{i-1}_p \|_{L^2_y} \sum_{k = 1}^{i-1}  \| \p_x u^k_p \|_{L^\infty_y} + \| \p_x u^{i-1}_p \|_{L^2_y} \sum_{k = 1}^{i-2} \| u^k_p \|_{L^\infty_y} \\
\lesssim & \langle x \rangle^{- \frac 5 4},
\end{align}
where we have invoked estimates \eqref{blas:conv:1} - \eqref{water:65}, which are assumed inductively to hold for the $0$ through $i-1$'th boundary layers, as well as estimates \eqref{water:88} which is also assumed inductively to hold up to the $i$'th Euler layer.

We now arrive at the terms 
\begin{align} \n
\| \eps^{- \frac i 2} \mathcal{G}_p^{(i)} \|_{L^2_y} \lesssim & \| v^{i-1}_p \|_{L^2_y} \sum_{k = 1}^{i}  \| u^k_{eY} \|_{L^\infty_y} +\|  u^i_{eY} \|_{L^\infty_y} \sum_{k = 0}^{i-2}  \| v^k_p \|_{L^2_y} +  \| \bar{v}^{i-1}_p \|_{L^\infty_y} \sum_{k = 1}^{i-1}  \| u^k_{py} \|_{L^2_y} \\ \n
&+ \|  u^{i-1}_{py} \|_{L^2_y} \sum_{k = 1}^{i - 2} \| \bar{v}^k_p \|_{L^\infty_y}+ \sum_{j = 1}^{i} \|  u^{i-1}_{py} \bar{v}^j_E \|_{L^2_y} + \eps^{- \frac 1 2}  \sum_{k = 0}^{i-2} \| \bar{v}^i_E u^k_{py} \|_{L^2_y} \\ \n
\lesssim & \langle x \rangle^{- \frac 7 4} +  \langle x \rangle^{- \frac 7 4} + \delta_{i \ge 2} \sum_{k = 1}^{i-1} \langle x \rangle^{- \frac 5 4 + 2\sigma_{\ast}} + \delta_{i \ge 3} \sum_{k = 1}^{i-2} \langle x \rangle^{- \frac 5 4 + 2\sigma_{\ast}}  + \langle x \rangle^{- \frac 5 4} \\
\lesssim & \langle x \rangle^{- \frac{19}{16}},
\end{align}
where we have used the inductive estimates \eqref{blas:conv:1} - \eqref{water:88}, as well as the estimate 
\begin{align}
\| u^0_{py} \bar{v}^i_E \|_{L^2_y} =  \| u^0_{py} (\int_0^Y \bar{v}^i_{EY} \ud Y') \|_{L^2_y} \lesssim \sqrt{\eps} \| y u^0_{py} \|_{L^2_y} \| v^i_{EY} \|_{L^\infty_Y} \lesssim \sqrt{\eps} \langle x \rangle^{- \frac 5 4}, 
\end{align}
where we have invoked estimate \eqref{blas:conv:1} on $u^0_{py}$ and the inductively assumed estimate \eqref{water:78} on $v^i_{EY}$.

We now, in consultation with \eqref{Fpi} and \eqref{def:PPi}, need to estimate the term 
\begin{align} \n
\| \eps^{- \frac i 2} \p_x P^{(i)}_p \|_{L^2_y} &\le  \eps^{- \frac i 2 + 1} \| \int_y^\infty \p_x \mathcal{H}^{(i)}_p \|_{L^2_y} + \eps^{- \frac i 2 + 1} \| \int_y^\infty \p_x \mathcal{J}^{(i)}_p \|_{L^2_y} + \eps^{\frac 1 2} \| \int_y^\infty \p_x \Delta_\eps v^{i-1}_p \|_{L^2_y} \\ \n
&\lesssim_\kappa \eps^{- \frac i 2 + 1} \| \langle y \rangle^{\frac 3 2 + \kappa} \p_x \mathcal{H}^{(i)}_p \|_{L^\infty_y} + \eps^{- \frac i 2 + 1} \| \langle y \rangle^{\frac 3 2 + \kappa} \p_x \mathcal{J}^{(i)}_p \|_{L^\infty_y} + \eps^{\frac 1 2} \| \langle y \rangle^{\frac 3 2 + \kappa} \p_x \Delta_\eps v^{i-1}_p \|_{L^\infty_y} \\ \n
& \lesssim_\kappa  \eps^{- \frac i 2 + 1} \langle x \rangle^{\frac 3 4 + \frac \kappa 2} \| \langle z \rangle^{\frac 3 2 + \kappa} \p_x \mathcal{H}^{(i)}_p \|_{L^\infty_y} +  \eps^{- \frac i 2 + 1}  \langle x \rangle^{\frac 3 4 + \frac \kappa 2} \| \langle z \rangle^{\frac 3 2 + \kappa} \p_x \mathcal{J}^{(i)}_p \|_{L^\infty_y} \\ \n
&\qquad + \eps^{\frac 1 2}  \langle x \rangle^{\frac 3 4 + \frac \kappa 2} \| \langle z \rangle^{\frac 3 2 + \kappa} \p_x \Delta_\eps v^{i-1}_p \|_{L^\infty_y} \\
&\lesssim_\kappa \sqrt{\eps} \langle x \rangle^{- \frac 7 4 + \frac \kappa 2},
\end{align}
for a small but fixed $\kappa > 0$. Above, we have, upon consulting the definitions \eqref{Hpi} and \eqref{def:JPi}, used the estimate 
\begin{align}
\|\langle z \rangle^{\frac 3 2 + \kappa} \p_x \mathcal{H}_p^{(i)} \|_{L^\infty_y} + \| \langle z \rangle^{\frac 3 2 + \kappa} \p_x \mathcal{J}_p^{(i)} \|_{L^\infty_y} \lesssim \sqrt{\eps} \langle x \rangle^{- \frac 5 2}.
\end{align}

Consulting \eqref{Fpi}, the final term we need to estimate is $\eps^{\frac 1 2}\| u^{i-1}_{pxx} \|_{L^2_y} \lesssim \eps^{\frac 1 2} \langle x \rangle^{- \frac 7 4}$, where we have again used the inductive estimates \eqref{blas:conv:1} - \eqref{water:54} as well as \eqref{water:1}. 

The $L^\infty_y$ estimates work in a nearly identical manner, with an extra factor of $\langle x \rangle^{- \frac 1 4}$ relative to the $L^2_y$ estimates appearing. 
\end{proof}

\subsection{Embeddings}

We now state embedding estimates that are valid for the above norms.  
\begin{lemma} \label{abc:1} The following estimate is valid, for $0 \le j \le \frac{\ell_i}{2}$,
\begin{align} \label{emb:start:1}
\| U^{(j)} \langle z \rangle^{m_i} \|_{L^\infty_y} \lesssim \| U \|_{\mathcal{X}_P} \langle x \rangle^{-j - \frac 1 4 - \sigma_\ast}.
\end{align}
\end{lemma}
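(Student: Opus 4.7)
The plan is a weighted Sobolev embedding in the $y$-variable, using Gagliardo--Nirenberg together with the Hardy-type bound \eqref{Hardy:1:here} to absorb the $\bar{u}_B$-weights that appear throughout the norms making up $\mathcal{X}_P$. I write $g := U^{(j)} \langle z \rangle^{m_i}$ and, noting that $g$ and its $y$-derivatives decay at $y = \infty$ by the hypotheses on the profiles, apply the standard one-dimensional interpolation
\begin{equation*}
\| g \|_{L^\infty_y}^2 \;\le\; 2\, \| g \|_{L^2_y}\, \| \p_y g \|_{L^2_y}.
\end{equation*}
The goal is to show, pointwise in $x$, that $\| g \|_{L^2_y} \lesssim \| U \|_{\mathcal{X}_P}\langle x \rangle^{-j+\sigma_\ast}$ and $\| \p_y g \|_{L^2_y} \lesssim \| U \|_{\mathcal{X}_P}\langle x \rangle^{-j-\frac 12+\sigma_\ast}$, which by the display above yields the claimed $L^\infty_y$ decay (modulo the sign convention on $\sigma_\ast$).

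For the first factor, I invoke \eqref{Hardy:1:here} with $f = g$, producing
\begin{equation*}
\| g \|_{L^2_y}^2 \;\lesssim\; \gamma \langle x \rangle\, \| \sqrt{\bar{u}_B}\, \p_y g \|_{L^2_y}^2 + \gamma^{-2}\, \| \bar{u}_B\, g \|_{L^2_y}^2.
\end{equation*}
The second term is immediately controlled by $\| U \|_{X_{j,m_i}}$, contributing $\| U \|_{\mathcal{X}_P}^2 \langle x \rangle^{-2j+2\sigma_\ast}$. For the first term I expand $\p_y g = U^{(j)}_y \langle z \rangle^{m_i} + \tfrac{m_i z}{\sqrt{\langle x \rangle}\langle z \rangle}\,U^{(j)} \langle z \rangle^{m_i-1}$; the $U^{(j)}_y$ piece is absorbed by the $L^\infty_x L^2_y$ component of $\| U \|_{X_{j+\frac 12, m_i}}$, while the lower order piece is controlled by $\| U \|_{X_{j,m_i-1}}$ and the extra $\langle x \rangle^{-1/2}$ from $z' = \langle x\rangle^{-1/2}$ combines favorably with the Hardy prefactor. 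Tracking powers of $\langle x \rangle$ gives the desired bound on $\| g \|_{L^2_y}$.

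The $L^2_y$ bound on $\p_y g$ is the delicate step. Applying \eqref{Hardy:1:here} to $\p_y g$ reduces matters to bounding $\| \bar{u}_B \, \p_y g \|_{L^2_y}$ (controlled pointwise in $x$ by $\| U \|_{Y_{j+\frac 12, m_i}}$) and $\| \sqrt{\bar{u}_B}\, \p_{yy} g \|_{L^2_y}$. The obstacle is that $\mathcal{X}_P$ controls $\sqrt{\bar{u}_B}\, U^{(j)}_{yy}$ only in $L^2_x L^2_y$ (via $Y_{j+\frac 12,m_i}$), not in $L^\infty_x L^2_y$. To recover the missing pointwise-in-$x$ control, I use the equation \eqref{Pr:af} itself: substituting $u = \bar{u}_B U + \bar{u}_{By} q$ into $\p_y^2 u$ and solving for $U^{(j)}_{yy}$ produces the pointwise identity
\begin{equation*}
\bar{u}_B\, U^{(j)}_{yy} \;=\; \bar{u}_B^2\, U^{(j+1)} + \bar{u}_B \bar{v}_B\, U^{(j)}_y - \bar{u}_{Byy}\, U^{(j)} - 3 \bar{u}_{By}\, U^{(j)}_y - \p_x^j F + (\text{lower order commutators}),
\end{equation*}
whose right-hand side admits an $L^\infty_x L^2_y$ bound of the correct order: the $\bar{u}_B^2 U^{(j+1)}$ term is in $X_{j+1,m_i}$ (here is precisely where the hypothesis $j \le \lfloor \ell_i/2 \rfloor$ allows one additional $x$-derivative), the $\bar{u}_B\bar{v}_B U^{(j)}_y$ term uses the decay $\bar{v}_B = O(\langle x \rangle^{-1/2})$ from \eqref{indu:2} combined with $X_{j+\frac 12, m_i}$, the remaining background factors $\bar{u}_{Byy}, \bar{u}_{By}$ obey the same $\langle x \rangle$-decay, and the forcing satisfies \eqref{forcing:est:PR:2}--\eqref{forcing:est:Pr:lin}. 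Together with the commutators arising from the product rule on $\langle z \rangle^{m_i}$, which only involve strictly lower $z$-weighted norms already accounted for, one obtains $\| \sqrt{\bar{u}_B}\, \p_{yy} g \|_{L^2_y} \lesssim \| U \|_{\mathcal{X}_P}\langle x \rangle^{-j-1+\sigma_\ast}$, uniformly in $x$.

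The main obstacle is thus the third step: one cannot simply differentiate and apply the norms, because $\sqrt{\bar{u}_B}\, U^{(j)}_{yy}$ lacks a direct $L^\infty_x L^2_y$ control in $\mathcal{X}_P$. The workaround is the algebraic substitution via \eqref{Pr:af}, which trades a $y$-derivative for one additional $x$-derivative and lower order quantities. Once this substitution is made, combining the three steps gives $\| g \|_{L^\infty_y}^2 \lesssim \| U \|_{\mathcal{X}_P}^2\langle x \rangle^{-2j-\frac 12+2\sigma_\ast}$, which is the claim.
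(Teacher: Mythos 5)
Your proposal takes a genuinely different route from the paper's, and unfortunately the route has a gap at the place you yourself flagged as ``the delicate step.''

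The paper's proof is the ``double fundamental-theorem-of-calculus'' argument: one first writes $|U^{(j)}|^2\langle x\rangle^{2j+\frac12-2\sigma_\ast}\langle z\rangle^{2m_i}$ as $-\int_y^\infty\p_{y'}(\cdot)\,\ud y'$, and then bounds the resulting $y$-integral by \emph{another} FTC, this time in $x$. Cauchy--Schwarz then turns the pointwise quantity into products of $L^2_{xy}$-norms, which is exactly what $\mathcal{X}_P$ controls; the only additional input is a single application of \eqref{Hardy:1:here} at the $L^2_{xy}$ level to trade the missing factors of $\bar{u}_B$, and at that level the degeneracy of $\bar{u}_B$ at $y=0$ is harmless. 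In particular, the paper never needs $L^\infty_x L^2_y$ control of any weighted second $y$-derivative.

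Your strategy instead aims for pointwise-in-$x$ bounds on $\|g\|_{L^2_y}$ and $\|\p_y g\|_{L^2_y}$, and you correctly observe that the second of these requires $L^\infty_x L^2_y$ control of $\sqrt{\bar{u}_B}\,U^{(j)}_{yy}$, which $\mathcal{X}_P$ does not supply. The idea of trading the missing $y$-derivative for an $x$-derivative via \eqref{Pr:af} is reasonable, and your identity for $\bar{u}_B U^{(j)}_{yy}$ is correct. The problem is the next step: to pass to $\|\sqrt{\bar{u}_B}\p_{yy}g\|_{L^2_y}$ you must divide the right-hand side by $\sqrt{\bar{u}_B}$, and near $y=0$ (where $\bar{u}_B\sim y\langle x\rangle^{-1/2}$) the two terms
\[
\frac{\bar{u}_{By}}{\sqrt{\bar{u}_B}}\,U^{(j)}_y
\qquad\text{and}\qquad
\frac{\p_x^j F}{\sqrt{\bar{u}_B}}
\]
each give an $L^2_y$ integrand of size $\sim 1/y$, so they are \emph{individually not in $L^2_y$} (generically $U^{(j)}_y(x,0)\neq0$ and $(\p_x^j F)(x,0)\neq 0$). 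Your argument estimates the right-hand side term by term and cites \eqref{forcing:est:PR:2}--\eqref{forcing:est:Pr:lin} and the background decay, which cannot produce a finite bound here. The sum is finite only because \eqref{Pr:af} evaluated at $y=0$ (where $\bar{u}_B=\bar{v}_B=\bar{u}_{Byy}=q=0$) forces the exact cancellation $3\bar{u}_{By}(x,0)\,U^{(j)}_y(x,0)+(\p_x^j F)(x,0)=0$. Your proof never identifies this cancellation; and even once one does, bounding the remaining vanishing-at-$y=0$ combination against $1/\sqrt{\bar{u}_B}$ with the required $\langle x\rangle$-decay feeds back into $U^{(j)}_{yy}$ and is not obviously noncircular. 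This degeneracy at $y=0$ is precisely what the paper's double-FTC route is designed to sidestep. (A small unrelated point: the $-\sigma_\ast$ in \eqref{emb:start:1} is indeed a typo for $+\sigma_\ast$, as the paper's own proof and the downstream use in \eqref{hgj:1} both produce $+\sigma_\ast$.)
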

\begin{proof} We integrate
\begin{align}\label{ffe:1}
|U^{(j)}|^2 \langle x \rangle^{2j + \frac 1 2 - 2 \sigma_\ast} \langle z \rangle^{2m_i} \le & | \int_y^\infty 2 U^{(j)} U^{(j)}_y \langle x \rangle^{2j+\frac 1 2 - 2 \sigma_\ast} \langle z \rangle^{2m_i}| + 2m_i |\int_y^\infty |U^{(j)}|^2 \langle x \rangle^{2j - 2 \sigma_\ast} \langle z \rangle^{2m_i-1}|.
\end{align}
We integrate the first quantity from \eqref{ffe:1} now in $x$ to get 
\begin{align} \n
 | \int_y^\infty 2 U^{(j)} U^{(j)}_y \langle x \rangle^{2j+\frac 1 2 - 2 \sigma_\ast} \langle z \rangle^{2m_i}|  \lesssim &  \| U^{(j)}_x \langle x \rangle^{j + \frac 1 2.- \sigma_\ast} \langle z \rangle^{m_i} \| \| U^{(j)}_y \langle x \rangle^{j - \sigma_\ast} \langle z \rangle^{m_i} \|  \\ \n
 &+ \| U^{(j)} \langle x \rangle^{j - \frac 1 2 - \sigma_\ast} \langle z \rangle^{m_i} \| \| U^{(j+1)}_y \langle x \rangle^{j+1 - \sigma_\ast} \langle z \rangle^{m_i} \|  \\ 
 &+  \| U^{(j)} \langle x \rangle^{j - \frac 1 2 - \sigma_\ast} \langle z \rangle^{m_i} \| \| U^{(j)}_y \langle x \rangle^{j - \sigma_\ast} \langle z \rangle^{m_i} \| \lesssim \| U \|_{\mathcal{X}}^2,
\end{align}
We integrate the second quantity from \eqref{ffe:1} to get 
\begin{align} \n
|\int_y^\infty |U^{(j)}|^2\langle x \rangle^{2j - 2 \sigma_\ast} \langle z \rangle^{2m_i-1}| \lesssim & \| U^{(j)} \langle x \rangle^{j - \frac 1 2 - \sigma_\ast} \langle z \rangle^{m_i} \| \| U^{(j+1)} \langle x \rangle^{j + \frac 1 2 - \sigma_\ast} \langle z \rangle^{m_i-1} \| \\
+& \| U^{(j)} \langle x \rangle^{j - \frac 1 2 - \sigma_\ast} \langle z \rangle^{m_i} \|  \| U^{(j)} \langle x \rangle^{j - \frac 1 2 - \sigma_\ast} \langle z \rangle^{m_i-1} \| \lesssim \|U \|_{\mathcal{X}_P}^2,
\end{align}
where we have invoked estimate \ref{Hardy:1:here}.
\end{proof}

We now use the estimates in Lemma \ref{abc:1} to recover estimates on $u$ and $v$ (and derivatives thereof) that are required from Proposition \ref{prop:PrLin}.
\begin{lemma} \label{lemma:gE:1} Let $[u^i_p, v^i_p]$ satisfy equation \eqref{eq:lin:Pr}. Then the following estimates are valid 
\begin{align}  \label{L2:1:a}
& \| \langle z \rangle^{m_i} \p_x^k \p_y^j u^{i}_p \|_{L^2_y} \lesssim \langle x \rangle^{-k - \frac j2 + \sigma_\ast} (\|U \|_{\mathcal{X}_P} + 1)\\ \label{L2:1:b}
& \| \langle z \rangle^{m_i} \p_x^k \p_y^j v^{i}_p \|_{L^2_y} \lesssim \langle x \rangle^{- \frac 1 2-k - \frac j2 + \sigma_\ast} (\|U \|_{\mathcal{X}_P} + 1)\\ \label{hgj:1}
&\| \langle z \rangle^{m_i}\p_x^k \p_y^j u_p^{i} \|_{L^\infty_y} \lesssim \langle x \rangle^{- \frac 1 4 - k - \frac j 2 + \sigma_\ast} ( \|U \|_{\mathcal{X}_P} + 1) \\ \label{hgj:2}
&\| \langle z \rangle^{m_i} \p_x^k \p_y^j v_p^{i} \|_{L^\infty_y} \lesssim \langle x \rangle^{- \frac 3 4 - k - \frac j 2 + \sigma_\ast} ( \| U \|_{\mathcal{X}_P} + 1),
\end{align}
for $2k + j \le \ell_i$. 
\end{lemma}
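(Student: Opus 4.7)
\medskip

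\noindent\textbf{Proof Plan for Lemma \ref{lemma:gE:1}.} The strategy is to pass from the good variables $U, V$ (in which the energy estimates of Section \ref{section:PL} are performed and on which Lemma \ref{abc:1} gives pointwise control) back to the physical unknowns $[u^i_p, v^i_p]$, in three stages: invert the von-Mise substitution to recover $[u^i, v^i]$, undo the homogenizations \eqref{def:def:LP}--\eqref{def:def:LP:v} to recover $[u^i_p, v^i_p]$, and finally pick up $v^i_p$ from $\p_x u^i_p$ via the divergence-free relation in \eqref{abs:intro:2}.

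First I would prove \eqref{L2:1:a} and \eqref{hgj:1}. Using \eqref{inv:u}, for any $k$,
\begin{equation*}
\p_x^k u = \p_x^k(\bar{u}_B U) + \p_x^k(\bar{u}_{By} q),
\end{equation*}
so after distributing the $\p_x^k$ and (for \eqref{hgj:1}) an additional $\p_y^j$, one obtains a finite sum of terms of the shape $(\text{coefficient in }\bar{u}_B) \cdot \p_x^{k_1} \p_y^{j_1} U$ or $(\text{coefficient in }\bar{u}_B)\cdot \p_x^{k_1}\p_y^{j_1} q$, where the coefficients are derivatives of $\bar{u}_B$ which, by \eqref{blas:conv:1} together with the Blasius bounds, decay at the rate $\langle x\rangle^{-\frac{1}{2}-k'-\frac{j'}{2}}$ times $\langle z\rangle^{-M'}$ for any $M'$. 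The terms involving $U$ are controlled directly by the $X_{k,m}$ and $Y_{k+\frac12,m}$ pieces of $\|U\|_{\mathcal{X}_P}$. For the $q$-terms, writing $q = \int_0^y U\, dy'$ and applying the Hardy-type bound \eqref{Hardy:1:here} (in the form $\|q/y\|_{L^2_y} \lesssim \|U\|_{L^2_y}$ modulo $\bar{u}_B$-weights) lets one trade $q$ for $U$ at the cost of one power of $y$, which is compensated by a power of $\bar{u}_{By}$. For pointwise estimates, one applies Lemma \ref{abc:1} to the $U$-pieces and a simple embedding $\|q\langle z\rangle^{m_i-1}\|_{L^\infty_y}\lesssim \sqrt{\langle x\rangle}\|U\langle z\rangle^{m_i}\|_{L^\infty_y}$ (obtained by integrating from $y=0$) to the $q$-pieces. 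Adding up and using the definition \eqref{def:space:X:lin}, this yields both \eqref{L2:1:a} and \eqref{hgj:1} for $u^i$ in place of $u^i_p$, with the $\mathcal{X}_P$-norm on the right.

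Next, to undo the homogenization \eqref{def:def:LP}, one writes $u^i_p = u^i - \phi(z)u^i_E(x,0)$. The correction $\phi(z)u^i_E(x,0)$ is supported in $z\le 2$, is smooth, and by the inductive bound \eqref{water:88} satisfies $|\p_x^k u^i_E(x,0)|\lesssim \langle x\rangle^{-\frac12-k}$; since $-\frac12-k\le -\frac14 -k+\sigma_\ast$, this correction is strictly better than what is claimed and therefore contributes the ``$+1$'' on the right-hand side of \eqref{hgj:1}, \eqref{L2:1:a}. Applying $\p_y^j$ and $\p_x^k$ produces factors $\langle x\rangle^{-j/2}$ from the $\phi$-derivatives, which is precisely the decay demanded.

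Finally, for the $v$-estimates \eqref{L2:1:b}, \eqref{hgj:2}, I would use $v^i_p = \int_y^\infty \p_x u^i_p\,dy'$. Given the $u$-estimate already in hand, the weighted $L^\infty_y$ estimate follows from
\begin{equation*}
|\p_x^k\p_y^j v^i_p(x,y)|\le \int_y^\infty |\p_x^{k+1}\p_y^j u^i_p(x,y')|\,dy' \lesssim \sqrt{\langle x\rangle}\,\langle z\rangle^{-m_i+1}\cdot \langle x\rangle^{-\frac{5}{4}-k-\frac{j}{2}+\sigma_\ast}(\|U\|_{\mathcal{X}_P}+1),
\end{equation*}
which gives the stated $\langle x\rangle^{-\frac34-k-\frac{j}{2}+\sigma_\ast}$ decay; the apparent loss of one power of $z$-decay is absorbed into the index $m_i$ by choosing the initial weight index one larger. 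The $L^2_y$ bound \eqref{L2:1:b} is obtained by the analogous Minkowski/Hardy argument in the $y$-variable. The only mildly delicate point is that the analogous integral representation for $v^i_p$ involves the homogenization correction \eqref{def:def:LP2} as well; its contributions are localized in $z$ and decay like $u^i_{E}(x,0)\langle x\rangle^{1/2}\lesssim \langle x\rangle^0$ and $u^i_{Ex}(x,0)\langle x\rangle^{1/2}\lesssim \langle x\rangle^{-1}$, so after distributing $\p_x^k\p_y^j$ they are subsumed in the ``$+1$'' term on the right-hand side.

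The main obstacle, which is really bookkeeping rather than analysis, is keeping track of the $z$-weights through the inversion formulas \eqref{inv:u}--\eqref{inv:v}: the $q$-terms are \emph{not} naturally $z$-decaying, and one relies on the strong $\langle z\rangle^{-M}$ decay of $\bar{u}_{By}, \bar{u}_{Byy}, \ldots$ (provided by \eqref{blas:conv:1} together with the Blasius asymptotics) to compensate. Provided the weight indices $m_i$ in the hypothesis \eqref{hyp:1} are chosen sufficiently large relative to $m_0$ in \eqref{blas:conv:1} (which is guaranteed by the cascade $m_{i+1}=m_i-5$), this bookkeeping closes, and the lemma follows.
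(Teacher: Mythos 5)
Your plan follows the same overall route as the paper: recover $u$ from the good variable via \eqref{inv:u}, control the $U$-pieces through the energy norms and Lemma~\ref{abc:1}, control the $q$-pieces by a Hardy/integration argument (using the strong $z$-decay of $\bar{u}_{By}$ to compensate for $q$'s lack of decay), undo the homogenization \eqref{def:def:LP}--\eqref{def:def:LP:v}, and then read off $v^i_p$ from the stream function. The minor slips (putting $\langle z\rangle^{m_i-1}$ on $q$ before recalling $q$ doesn't decay, and writing the $v$-integral with $\p_y^j$ under the integral sign rather than reducing $j\ge 1$ to the $u$-estimate by the divergence-free relation) are cosmetic.

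However, there is a genuine gap in how you propose to handle higher $y$-derivatives. You claim that after distributing $\p_x^k\p_y^j$ through $\p_y(\bar{u}_B U)+\p_y(\bar{u}_{By}q)$, the resulting terms $\p_x^{k_1}\p_y^{j_1}U$ ``are controlled directly by the $X_{k,m}$ and $Y_{k+\frac12,m}$ pieces of $\|U\|_{\mathcal{X}_P}$.'' This is false for $j_1\ge 3$: inspecting \eqref{X00:norm}--\eqref{Yhm:norm}, the energy hierarchy only controls $U^{(k)}$, $U^{(k)}_y$, and $U^{(k)}_{yy}$ (arbitrarily many $x$-derivatives, but at most \emph{two} $y$-derivatives of $U$). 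Since $\ell_i$ is large, the lemma asserts control of $\p_y^j u^i_p$ for $j$ in the thousands, and you cannot reach these by differentiating the inversion formula alone. The paper closes this by bootstrapping through the equation: it writes $u_{yy}$ from \eqref{eq:lin:Pr:hom} as $u_{yy} = \bar{u}^0_p u_x + \bar{u}^0_{px}u + \bar{v}^0_p u_y + \bar{v}\bar{u}^0_{py} - F^{(i)}$, so every extra pair of $y$-derivatives of $u$ is traded for lower-order quantities (plus one $x$-derivative, which the norms do control), with the forcing bounds \eqref{forcing:est:PR:2}--\eqref{forcing:est:Pr:lin} supplying the decay. Without this step your argument stops at $j\le 2$. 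A secondary consequence: even for $j=1$ the pointwise bound is not obtained by differentiating the inversion identity but by Sobolev interpolation between the $L^2_y$ bounds on $u_y$ and $u_{yy}$, the latter again coming from the equation. Insert the equation-bootstrap and the proof closes.
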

\begin{proof} We will establish \eqref{L2:1:a} - \eqref{hgj:2} for the quantities $[u, v]$. To translate back to $[u^i_p, v^i_p]$, we simply appeal to \eqref{def:def:LP} - \eqref{def:def:LP2}, and the corresponding estimates for $u^i_E(x, 0)$ from \eqref{water:88}. 

We first set $j = k = 0$ and address \eqref{L2:1:a} and \eqref{hgj:1}. In this case, we use the formula 
\begin{align} \n
\| u \langle x \rangle^{\frac 1 4 - \sigma_\ast} \langle z \rangle^{m_i}\|_{L^\infty_y} \le &  \| \bar{u}^0_p U \langle x \rangle^{\frac 1 4 - \sigma_\ast} \langle z \rangle^{m_i}\|_{L^\infty_y} + \| \bar{u}^0_{py} y \langle z \rangle^{m_i}\|_{L^\infty} \| \frac{q}{y} \langle x \rangle^{\frac 1 4 - \sigma_\ast} \|_{L^\infty_y} \\ \label{jko1}
\lesssim & \| U \langle x \rangle^{\frac 14 - \sigma_\ast }\|_{L^\infty_y} \lesssim \| U \|_{\mathcal{X}_P},
\end{align} 
where we have invoked estimates \eqref{water:1} and \eqref{blas:conv:1}, for the final step we use \eqref{emb:start:1}. The proof of the $L^2$ bound, \eqref{L2:1:a}, follows by replacing $L^\infty_y$ by $L^2_y$ in the estimate \eqref{jko1}. 

For $v$, we only need to treat the case when $j = 0$, as when $j \ge 1$ we use the divergence-free condition to appeal to \eqref{hgj:1}. The $k > 0$ case works in an analogous manner to $k = 0$, which we now demonstrate. For \eqref{L2:1:b}, we use the standard Hardy inequality 
\begin{align}
\| v \|_{L^2_y} \lesssim \| y u_x \|_{L^2_y} \lesssim \langle x \rangle^{\frac 1 2} \| \langle z \rangle u_x \|_{L^2_y} \lesssim \langle x \rangle^{\frac 1 2} \langle x \rangle^{-  1 + \sigma_\ast} \| U \|_{\mathcal{X}_P}.
\end{align}
To address \eqref{hgj:2}, we may perform a standard Sobolev embedding 
\begin{align}
\| v \|_{L^\infty_y} \lesssim \| v \|_{L^2_y}^{\frac 1 2} \| v_y \|_{L^2_y}^{\frac 1 2} \lesssim  (\langle x \rangle^{- \frac 1 2 + \sigma_\ast} \| U \|_{\mathcal{X}_P} )^{\frac 1 2} (\langle x \rangle^{- 1 + \sigma_\ast} \| U \|_{\mathcal{X}_P} )^{\frac 1 2} \lesssim \langle x \rangle^{- \frac 3 4 + \sigma_\ast} \|U \|_{\mathcal{X}_P}.
\end{align}

Next, we move to the $k = 0, j = 1$ case from \eqref{L2:1:a}. For this, we simply expand via 
\begin{align} \n
\| u_y  \langle z \rangle^{m_i} \langle x \rangle^{\frac 1 2 - \sigma_\ast} \|_{L^2_y} \lesssim &\| \bar{u}^0_p U_y \langle z \rangle^{m_i} \langle x \rangle^{\frac 1 2 - \sigma_\ast} \|_{L^2_y} + \| \bar{u}^0_{py} U \langle z \rangle^{m_i} \langle x \rangle^{\frac 1 2 - \sigma_\ast} \|_{L^2_y} \\
\lesssim & \| \bar{u}^0_p U_y \langle z \rangle^{m_i} \langle x \rangle^{\frac 1 2 - \sigma_\ast} \|_{L^2_y} +\|  U \langle z \rangle^{m_i} \langle x \rangle^{ - \sigma_\ast} \|_{L^2_y}  \lesssim \|U \|_{\mathcal{X}}.
\end{align}

Next, we move to the $k= 0, j = 2$ case from \eqref{L2:1:a}. For this, we simply use the equation \eqref{eq:lin:Pr:hom} to obtain 
\begin{align} \n
\| u_{yy} \langle z \rangle^{m_i} \langle x \rangle^{1-\sigma_\ast}\|_{L^2_y} \lesssim & \| F^{(i)} \langle z \rangle^{m_i} \langle x \rangle^{1-\sigma_\ast} \|_{L^2_y} + \| \bar{u}^0_p u_x \langle z \rangle^{m_i} \langle x \rangle^{1-\sigma_\ast}\|_{L^2_y} + \| \bar{u}^0_{px} u \langle z \rangle^{m_i} \langle x \rangle^{1-\sigma_\ast}\|_{L^2_y} \\ \n
&+ \| \bar{v}^0_p u_y \langle z \rangle^{m_i} \langle x \rangle^{1-\sigma_\ast}\|_{L^2_y} + \| \bar{v} \bar{u}^0_{py}\langle z \rangle^{m_i} \langle x \rangle^{1-\sigma_\ast}\|_{L^2_y} \lesssim  1 + \| U \|_{\mathcal{X}_P},
\end{align}
where we have invoked the estimate on the forcing, \eqref{forcing:est:Pr:lin}. 

Next, we move to the $k = 0, j = 1$ case from \eqref{hgj:1}, which follows from a straightforward Sobolev embedding,
\begin{align} \n
\| u_y \langle z \rangle^{m_i} \langle x \rangle^{\frac 3 4 - \sigma_\ast} \|_{L^\infty_y} \lesssim  &\| u_y \langle z \rangle^{m_i} \langle x \rangle^{\frac 12 - \sigma_\ast} \|_{L^2_y}^{\frac 1 2} \| u_{yy} \langle z \rangle^{m_i} \langle x \rangle^{1 - \sigma_\ast} \|_{L^2_y}^{\frac 1 2} \\ \n
&+ \| u_y \langle z \rangle^{m_i} \langle x \rangle^{\frac 12 - \sigma_\ast} \|_{L^2_y}^{\frac 1 2} \| u_y \langle z \rangle^{m_i-1} \langle x \rangle^{\frac 12 - \sigma_\ast} \|_{L^2_y}^{\frac 1 2} \lesssim  1 + \| U \|_{\mathcal{X}_P}.
\end{align}
Next, we jump to the the $k = 0, j = 2$ case from \eqref{hgj:1}. For this, we use the equation \eqref{eq:lin:Pr:hom} to rewrite 
\begin{align} \n
\| u_{yy} \langle z \rangle^{m_i} \|_{L^\infty_y} \lesssim &\| F^{(i)} \langle z \rangle^{m_i} \|_{L^\infty_y} + \| \bar{u}^0_p u_x \langle z \rangle^{m_i} \|_{L^\infty_y} + \| \bar{u}^0_{px} u \langle z \rangle^{m_i} \|_{L^\infty_y} \\ \n
&+ \| \bar{v}^0_p \p_y u \langle z \rangle^{m_i} \|_{L^\infty_y} + \| \bar{v} \bar{u}^0_{py} \langle z \rangle^{m_i} \|_{L^\infty_y} 
\\  \n
\lesssim & \langle x \rangle^{- \frac{23}{16}} + \langle x \rangle^{- \frac 5 4 + \sigma_\ast} \| u_x \langle x \rangle^{\frac 5 4 - \sigma_\ast} \langle z \rangle^{m_i} \|_{L^\infty_y} + \| \bar{u}^0_{px} \langle x \rangle \langle z \rangle^{m_i} \|_\infty  \| u \langle x \rangle^{\frac 1 4 - \sigma_\ast} \langle z \rangle^{m_i} \|_{L^\infty_y} \langle x \rangle^{- \frac 5 4 + \sigma_\ast} \\ \n
& + \| \bar{v}^0_p \langle x \rangle^{\frac 1 2} \|_\infty \| u_y \langle z \rangle^{m_i} \langle x \rangle^{\frac 3 4 - \sigma_\ast} \|_{L^\infty_y}  \langle x \rangle^{- \frac 5 4 + \sigma_\ast}+ \| \bar{v} \langle x \rangle^{\frac 3 4 - \sigma_\ast} \|_{L^\infty_y} \| \bar{u}^0_{py} \langle x \rangle^{\frac 1 2} \|_{L^\infty_y}  \langle x \rangle^{- \frac 5 4  + \sigma_\ast} \\ \label{ajaj}
\lesssim & \langle x \rangle^{- \frac{23}{16}} + \langle x \rangle^{- \frac 5 4 + \sigma_\ast} \|U \|_{\mathcal{X}_P},
\end{align}
where we have invoked the estimate on the forcing, \eqref{forcing:est:PR:2}. Higher order $x$ derivatives work by commuting with $x \p_x$, whereas higher order $y$ derivatives work by invoking the equation as in \eqref{ajaj}.
\end{proof}

\subsection{Closing the Induction}

We will now need initial data estimates
\begin{lemma} Let $(\p_x^k U)_0:= \p_x^k U|_{x = 0}$. Assume the compatibility condition on $U^i_p$ from Definition \ref{para:compat:def}, and estimate \eqref{hyp:1}. Then the following estimates are valid, 
\begin{align} \label{est:IDLK}
\| \bar{u}^0_p (\p_x^k U)_0 \langle y \rangle^M \|_{L^2_y} +  \| \sqrt{\bar{u}^0_p} \p_y (\p_x^k U)_0 \langle y \rangle^M \|_{L^2_y}^2 \lesssim  1.
\end{align}
\end{lemma}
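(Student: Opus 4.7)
The strategy is to trade $x$-derivatives for $y$-derivatives via the equation, reducing everything at $x=0$ to quantities involving only $y$-derivatives of the initial data $u_0(y)$, the background $\bar{u}^0_p|_{x=0}$, $\bar{v}^0_p|_{x=0}$, and the forcing $F^{(i)}$ and its $x$-derivatives evaluated at $x=0$. The hypothesis \eqref{hyp:1} controls $u_0$ with many $y$-derivatives and polynomial decay, the Blasius estimates \eqref{blas:conv:1}--\eqref{blas:conv:2} together with \eqref{v:blasius}, \eqref{water:1} control the background at $x=0$, and \eqref{forcing:est:PR:2}--\eqref{forcing:est:Pr:lin} (together with their $x$-differentiated analogues, which have already been established inductively for lower layers) control the forcing.

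I would proceed inductively in $k$. For the base case $k=0$, recall from \eqref{good:variables} that $U = \bar{u}_B^{-1}u - \bar{u}_B^{-2}\bar{u}_{By}\psi$, so
\begin{equation*}
U_0(y) = \frac{1}{\bar{u}^0_p|_{x=0}}\Bigl(u_0(y) - \frac{\bar{u}^0_{py}|_{x=0}}{\bar{u}^0_p|_{x=0}}\psi_0(y)\Bigr), \qquad \psi_0(y) = \int_0^y u_0(y')\,\mathrm{d}y'.
\end{equation*}
Near $y=0$ the factor $\bar{u}^0_p|_{x=0}$ vanishes linearly; the parabolic compatibility conditions of Definition~\ref{para:compat:def} are designed precisely so that $u_0 - (\bar{u}^0_{py}/\bar{u}^0_p)\psi_0$ vanishes to matching order. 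Multiplying by $\bar{u}^0_p$ in the norm removes the one remaining inverse power, and the $L^2$ estimate then follows from \eqref{hyp:1}. For the inductive step, solve \eqref{eq:lin:Pr:hom} for $u_x$,
\begin{equation*}
u_x = \frac{1}{\bar{u}^0_p}\bigl(u_{yy} - \bar{u}^0_{px}u - \bar{v}^0_p u_y - \bar{v}\,\bar{u}^0_{py} + F^{(i)}\bigr),
\end{equation*}
and iterate: applying $\p_x^j$ to \eqref{eq:lin:Pr:hom} and solving for $\p_x^{j+1}u$ expresses every $\p_x^k u|_{x=0}$ as a polynomial combination of $y$-derivatives of $u_0$, the background, and $\p_x^j F^{(i)}|_{x=0}$, divided by powers of $\bar{u}^0_p|_{x=0}$. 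Differentiating the algebraic definition of $U$ and evaluating at $x=0$ then yields a similar formula for $(\p_x^k U)_0$, with one additional factor of $(\bar{u}^0_p|_{x=0})^{-1}$, which is exactly the one compensated by the weight $\bar{u}^0_p$ in the norm. The $y$-decay in the resulting bound is then a direct consequence of the hypothesis \eqref{hyp:1} on $u_0$ (with losses of at most $5$ indices per application of the reduction, which is why $m_{i+1} = m_i - 5$, $\ell_{i+1} = \ell_i - 5$ suffices) together with the fast decay of the background and forcing.

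The main obstacle is bookkeeping: one must verify that at each step of the iterated inversion, the numerator vanishes at $y=0$ to sufficient order to cancel the accumulated powers of $\bar{u}^0_p$ in the denominator, leaving only the single inverse power that is absorbed by the weight $\bar{u}^0_p$ in \eqref{est:IDLK}. This is the content of the parabolic compatibility conditions in Definition~\ref{para:compat:def}, which are stated precisely so that the inductive formulas for $(\p_x^k u)_0$ and $(\p_x^k U)_0$ produce smooth functions of $y$ near $y=0$. Once this is carefully verified, the $L^2_y$ norm with weight $\langle y\rangle^M$ reduces to a finite sum of products of $L^\infty_y$ norms of the background (and its derivatives) times $\langle y\rangle^M$-weighted $L^2_y$ norms of $y$-derivatives of $u_0$ and of $\p_x^j F^{(i)}|_{x=0}$, all of which are bounded by $1$ by the assumptions. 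The analogous argument applied to $\p_y (\p_x^k U)_0$, with the weight $\sqrt{\bar{u}^0_p}$, treats the second term in \eqref{est:IDLK}.
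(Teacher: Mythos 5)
Your proof is correct and follows essentially the same route as the paper: for $k=0$, use the zeroth-order compatibility $u_0(0)=0$ to see that $U_0 = u_0/\bar{u}^0_p - \bar{u}^0_{py}\psi_0/|\bar{u}^0_p|^2$ is a bounded, rapidly decaying function of $y$ controlled by \eqref{hyp:1}, and for $k\geq 1$, evaluate the equation at $x=0$ to trade an $x$-derivative for $y$-derivatives, with the higher-order compatibility conditions of Definition~\ref{para:compat:def} ensuring that the resulting quotients by powers of $\bar{u}^0_p$ stay bounded near $y=0$. The only small variant is that you invert \eqref{eq:lin:Pr:hom} for $u_x$ and then differentiate the relation $U = u/\bar{u}_B - \bar{u}_{By}\psi/\bar{u}_B^2$, whereas the paper evaluates the $U$-formulation \eqref{Pr:af} at $x=0$ directly, getting $U_x|_{x=0} = \bigl(F^{(i)}+\p_y^2 u(0,\cdot)\bigr)/|\bar{u}^0_p|^2 - (\bar{v}^0_p/\bar{u}^0_p)\,U_0' - 2(\bar{u}^0_{pyy}/|\bar{u}^0_p|^2)\,U_0$; both are valid, though the latter localizes the degenerate divisions a bit more transparently.
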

\begin{proof} According to the definition \eqref{eq:compat:initial:1}, and due to the compatibility condition placed on the datum, $U^i_p(\cdot)$, \eqref{compat:0}, we have the compatibility $u|_{x = 0}(0) = U^i_p(0) + \phi(0) u^i_E(0, 0) = 0 = u|_{y = 0}(0)$. Therefore, $U_0(y) = \frac{u|_{x = 0}(y)}{\bar{u}^0_p} - \bar{u}^0_{py} \frac{\psi|_{x = 0}(y)}{|\bar{u}^0_p|^2}$, is a sufficiently smooth, rapidly decaying function, satisfying the estimates \eqref{est:IDLK} due to the corresponding hypothesis \eqref{hyp:1}. For higher order $x$-derivatives, we derive the initial datum  by evaluating equation \eqref{Pr:af} at $\{x = 0\}$. For the first derivative, we obtain $U_x|_{x = 0} = \frac{F^{(i)} + \p_y^2 u(0, \cdot)}{|\bar{u}^0_p|^2} - \frac{\bar{u}^0_p \bar{v}^0_p}{|\bar{u}^0_p|^2} U_0' - \frac{2 \bar{u}^0_{pyy}}{|\bar{u}^0_p|^2} U_0$. We now note that the quotient, $\frac{F^{(i)} + \p_y^2 u(0, \cdot)}{|\bar{u}^0_p|^2}$, is bounded due to the compatibility condition from Definition \eqref{para:compat:def}. Higher order derivatives follow in an analogous manner, so we omit the details. 
\end{proof}

\begin{proof}[Proof of Proposition \ref{prop:PrLin}] The result follows immediately from our \textit{a-priori} estimates, Lemmas \ref{lem:LP:1}, \ref{lem:LP:2}, \ref{lem:LP:3}, and \ref{lemma:LP4}, coupled with the initial datum estimates, \eqref{est:IDLK} and the forcing estimates, \eqref{forcing:est:Pr:lin}, which provides the \textit{a-priori} estimate $\|U \|_{\mathcal{X}_P} \lesssim 1$. When applying Lemmas \ref{lem:LP:1}, \ref{lem:LP:2}, \ref{lem:LP:3}, and \ref{lemma:LP4}, we have made the choice $[k_{max}, \ell_{max}, m_{max}] = [k_i, \ell_i, m_i]$. We have also invoked that, due to \eqref{pnld1} - \eqref{pnld2}, we take $[\bar{u}_B, \bar{v}_B] := [\bar{u}_\ast, \bar{v}_{\ast}] + \delta_{\ast}[\tilde{u}^0_p, \tilde{v}^0_p] = [\bar{u}^0_p, \bar{v}^0_p]$, and the strong inductive hypotheses, \eqref{indu:1} -- \eqref{indu:2} hold according to Proposition \ref{Prop:nonlinear}.  We subsequently use Lemma \ref{lemma:gE:1} to obtain the estimates required by Proposition \ref{prop:PrLin}. It is then a standard matter to repeat all of the above estimates in the framework of a Galerkin method to obtain existence and uniqueness.
\end{proof} 

\section{Final Prandtl Layer, $i = N_1$} \label{cutoff:SEC}

In this subsection we perform the cut-off argument that is required for the final Prandtl layer. A cut-off is required for the boundary layer, $[u^{N_1}_p, v^{N_1}_p]$, as we need to ensure the boundary conditions $v^{N_1}_p(x, 0) = v^{N_1}_p(x, \infty) = 0$ simultaneously. We first define the auxiliary profiles, $[u_p, v_p]$ through the following system,
\begin{align} \label{eq:lin:Pr:N1}
&\bar{u}^0_p \p_x u_p + \bar{u}^0_{px} u_p + \bar{v}^0_p \p_y u_p + \bar{v}_p \p_y \bar{u}^0_p - \p_y^2 u_p = F^{(N_1)}_p, \\
&\bar{v}_p =  \int_0^y \p_x u_p, 
\end{align}
with initial and boundary conditions 
\begin{align}
u_p|_{x = 0} = U_{p}^{N_1}(y), \qquad u_p|_{y = 0} = - u^{N_1}_E(x, 0), \qquad u_p|_{y = \infty} = 0, 
\end{align}

We subsequently define $[u_p^{N_1}, v_p^{N_1}]$ by cutting off $[u_p, v_p]$ in a divergence-free manner via 
\begin{align}
u_p^{N_1} := & \chi(\sqrt{\eps}\frac{y}{\langle x \rangle}) u_p - \int_x^\infty \frac{\sqrt{\eps}}{\langle x' \rangle} \frac{y}{\langle x' \rangle} \chi'(\frac{\sqrt{\eps} y}{x'}) u_p \ud x' - \int_x^\infty \frac{\sqrt{\eps}}{\langle x' \rangle} \frac{1}{\langle x' \rangle} \chi'(\frac{\sqrt{\eps}y}{\langle x' \rangle}) \bar{v}_p \ud x' , \\
\bar{v}_p^{N_1} := & \chi(\sqrt{\eps} \frac{y}{\langle x \rangle}) \bar{v}_p.
\end{align}
As we only need to verify the boundary conditions $v^{N_1}_p(x, 0) = v^{N_1}_p(x, \infty) =0$, we have the freedom to choose the scale of the cut-off function. In this particular case, we have selected to cut-off at scale $\sqrt{\eps}\frac{y}{\langle x \rangle}$ for convenience, but other choices are certainly possible. For convenience of notation, we define 
\begin{align}
u_{\chi} := - \int_x^\infty \frac{\sqrt{\eps}}{\langle x' \rangle} \frac{y}{\langle x' \rangle} \chi'(\frac{\sqrt{\eps} y}{x'}) u_p \ud x' - \int_x^\infty \frac{\sqrt{\eps}}{\langle x' \rangle} \frac{1}{\langle x' \rangle} \chi'(\frac{\sqrt{\eps}y}{\langle x' \rangle}) \bar{v}_p \ud x',
\end{align}
so that $u_p^{N_1} =\chi(\sqrt{\eps}\frac{y}{\langle x \rangle}) u_p + u_{\chi}$. 

Due to the presence of cutting off $[u_p, v_p]$, this creates an additional output error which we denote by $F_{\chi}$, and define as  
\begin{align} \n
F_{\chi} :=&- \eps^{\frac{N_1}{2}} \Big(\bar{u}^0_p \frac{1}{\langle x \rangle} \frac{\sqrt{\eps}y}{\langle x \rangle} \chi'(\frac{\sqrt{\eps}y}{\langle x \rangle}) \bar{v}_p +  \bar{u}^0_{px} u_{\chi}  + \bar{v}^0_p \frac{\sqrt{\eps}}{\langle x \rangle} \chi'(\frac{\sqrt{\eps}y}{\langle x \rangle}) u_p + \bar{v}^0_p \p_y u_\chi \\ \label{def:F:chi}
&- \frac{\eps}{\langle x \rangle^2} \chi''(\frac{\sqrt{\eps}y}{\langle x \rangle}) u_p - 2 \frac{\sqrt{\eps}}{\langle x \rangle} \chi'(\frac{\sqrt{\eps}y}{\langle x \rangle}) u_{py} - \p_y^2 u_\chi + (1-\chi(\frac{\sqrt{\eps}y}{\langle x \rangle})) F^{(N_1)}_p \Big) 
\end{align}

We now estimate the output forcing, $F_R, G_R$, which is defined through \eqref{forcing:remainder}, 
\begin{lemma} \label{lemma:ref:2} Let $0 \le j \le 11$, and $k = 0$ or $1$. Then, 
\begin{align} \label{est:forcings:part1}
\| \p_x^j \p_y^k F_R \langle x \rangle^{\frac{11}{20}+ j + \frac k 2} \| + \sqrt{\eps} \|  \p_x^j \p_y^k G_R \langle x \rangle^{\frac{11}{20}+ j + \frac k 2} \| \le \eps^{5}. 
\end{align}
\end{lemma}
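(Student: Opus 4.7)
The approach rests on the multiscale construction of $(\bar u, \bar v, \bar P)$ from Appendix \ref{app:A}. Substituting \eqref{exp:base} into the rescaled Navier-Stokes system \eqref{eq:NS:1}--\eqref{eq:NS:3} and grouping in powers of $\sqrt{\eps}$, every contribution at order $\eps^{k/2}$ for $0 \le k \le N_1$ cancels identically: the cancellation at $k = 0$ comes from the nonlinear Prandtl system for $[\bar u^0_p, \bar v^0_p]$, those at $1 \le k \le N_1$ from the linearized Prandtl systems \eqref{eq:lin:Pr:HYU}--\eqref{abs:intro:3} coupled with the harmonic systems for the Euler correctors, and the matching at $y = 0$ is enforced through \eqref{datum:given}. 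Consequently $F_R$ and $G_R$ reduce to an explicit, finite list of leftover contributions of four types: (i) high-order nonlinear cross-products $\eps^{(i+j)/2}(u^i_\bullet)(\p_\bullet u^j_\bullet)$ with $i + j > N_1$; (ii) dissipative tails $\eps^{1 + i/2}\p_x^2(\cdot)$ produced by the $\eps\p_x^2$ piece of $\Delta_\eps$; (iii) Taylor remainders of order $\eps^{M/2}$, with $M$ at our disposal, arising when expanding each Euler profile $u^i_E(x, \sqrt{\eps} y)$ in powers of $\sqrt{\eps} y$ against its partner boundary-layer profile; and (iv) the cutoff error $F_\chi$ from \eqref{def:F:chi} together with its vertical analogue.

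For categories (i)--(iii) I would estimate each term by placing each factor in $L^\infty_y$ via the pointwise bounds \eqref{blas:conv:1}--\eqref{water:88} (valid up to the required order $2k + j \le \ell_i$), converting to $L^2_y$ through the $\langle z \rangle^{-m_i}$ weights, and integrating against $\langle x \rangle^{11/20 + j + k/2}$ in $x$. The prefactor $\eps^{k/2}$ is at least $\eps^{(N_1+1)/2}$ in case (i), at least $\eps^{(N_1+2)/2}$ in case (ii), and $\eps^{M/2}$ in case (iii); with at most $11$ derivatives in $x$ and one in $y$, the $\langle x \rangle$-power lost when balancing the weight against the decay rates from \eqref{blas:conv:1}--\eqref{water:88} is bounded by a fixed constant $C$ independent of $\eps$. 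The resulting bound $\lesssim \eps^{N_1/2 - C}$, with $N_1/2 = 200$, is vastly smaller than $\eps^5$, and the $\tfrac{11}{20}$ exponent is chosen conservatively to leave comfortable room for these losses.

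The genuinely delicate category is (iv). The factors $\chi'(\sqrt{\eps}\, y/\langle x \rangle)$ and $\chi''(\sqrt{\eps}\, y/\langle x \rangle)$ appearing in \eqref{def:F:chi} are supported on the strip $\{\sqrt{\eps}\, y/\langle x \rangle \in [1,2]\}$, which in the $z$-variable corresponds to $z \sim \sqrt{\langle x \rangle/\eps}$. On this set, \eqref{water:65}--\eqref{water:54} applied with the very large weight $m_{N_1} = m_0 - 5 N_1 = 10{,}000 - 2{,}000 = 8{,}000$ gives, for any fixed $a, b$,
\begin{equation*}
|\p_x^a \p_y^b u_p| + |\p_x^a \p_y^b \bar v_p| \lesssim \langle z \rangle^{-m_{N_1}} \langle x \rangle^{-1/4 - a - b/2 + \sigma_\ast} \lesssim (\eps/\langle x \rangle)^{m_{N_1}/2} \langle x \rangle^{C'},
\end{equation*}
for some fixed $C'$. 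This converts the spatial support of $\chi'$ into an enormous $\eps$-smallness that absorbs all polynomial $\langle x \rangle$ losses in \eqref{def:F:chi}; the resulting contribution to the norm in \eqref{est:forcings:part1} is $\lesssim \eps^{m_{N_1}/2 - C''}$, again trivially below $\eps^5$. The analogous reasoning handles $G_R$, the $\sqrt{\eps}$ in the statement reflecting the $P^\eps_y/\eps$ scaling in \eqref{eq:NS:2}. The main work is therefore not analytical but combinatorial: cataloguing the finitely many leftover terms produced by the substitution \eqref{exp:base}\,$\to$\,\eqref{eq:NS:1}--\eqref{eq:NS:3} in Appendix \ref{app:A}, and verifying that each one either carries the claimed prefactor $\eps^{k/2}$ of categories (i)--(iii) or is supported where $\chi'$ is nonzero.
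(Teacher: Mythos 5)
Your overall plan is in the right spirit, but it misses the paper's first and cleanest observation, and one of your four categories does not actually arise in this construction.

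\textbf{The observation $G_R = 0$.} The paper's proof opens by noting that $G_R$ vanishes identically. This is by design: the boundary-layer pressures $P^{(i)}_p$ in \eqref{def:PPi} are defined precisely so that $\p_y P^{(i)}_p$ absorbs the entire order-$\eps^{i/2}$ residual of the vertical momentum equation, and nothing is left over at any order. You treat $G_R$ as a nontrivial quantity and explain the $\sqrt{\eps}$ prefactor in the statement via the $P^\eps_y/\eps$ scaling. That's the wrong picture -- the $\sqrt{\eps}$ prefactor is immaterial because $G_R \equiv 0$. If you followed your plan you would spend effort cataloguing leftover terms in the second equation only to find there are none; worse, you would not have justified why the pressure choice makes them vanish, which is the actual content here.

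\textbf{Category (iii) -- Taylor remainders of the Euler profiles -- does not exist in this construction.} In Appendix \ref{app:A} the Euler correctors $u^i_E, v^i_E$ are kept as genuine functions of $Y = \sqrt{\eps}\,y$ and are never Taylor-expanded about $Y = 0$. The bilinear forcings $\mathcal{F}_p^{(i)}, \mathcal{G}_p^{(i)}, \dots$ in \eqref{Fpi:cal}, \eqref{Gpi} are exact products of Euler and Prandtl factors, not truncated series plus a remainder. (The one place something Taylor-like appears is the single estimate of $\|u^0_{py}\,\bar v^i_E\|_{L^2_y}$ inside the bound on $\mathcal{G}_p^{(i)}$, where the fundamental theorem of calculus on $\bar v^i_E$ gains a factor $\sqrt\eps$; but that happens inside the induction for $F^{(i)}_p$, not as a residual contribution to $F_R$.) So the correct catalogue is: the high-order products and dissipative tails you list under (i)--(ii), which the paper lumps into $F^{(N_1+1)}$ and bounds exactly as in \eqref{forcing:est:PR:2}--\eqref{forcing:est:Pr:lin}; plus the cutoff error $\eps^{-N_2/2}F_\chi$ from \eqref{def:F:chi}.

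\textbf{Your treatment of $F_\chi$ is fine but more elaborate than needed.} You exploit the localization $z \sim \sqrt{\langle x\rangle/\eps}$ on the support of $\chi', \chi''$ together with the heavy weight $m_{N_1} = 8{,}000$. That is a valid way to see the smallness, and indeed is what ultimately controls the intermediate quantities $\|u_\chi\|_{L^\infty_y}$, $\|\p_y u_\chi\|_{L^2_y}$, etc. The paper's proof, however, just reads off the explicit prefactor $\eps^{(N_1 - N_2)/2} = \eps^{100}$ in front of $\eps^{-N_2/2}F_\chi$, bounds each factor by $L^\infty_y$ or $L^2_y$, and lands well below $\eps^5$ with polynomial room in $\langle x\rangle$. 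Both routes close; the paper's is shorter because the $\eps$-power alone already dominates.

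\textbf{Summary.} Your estimates for the genuinely nonzero terms would go through, and the numerology ($N_1 = 400$, $N_2 = 200$, $m_{N_1} = 8{,}000$) is tracked correctly. But the proposal is incomplete without the observation that $G_R = 0$, and it catalogues a class of terms (Euler Taylor remainders) that this construction never produces.
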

\begin{proof} Consulting definition \eqref{forcing:remainder}, we have that $G_R = 0$, and so we just need to estimate $F_R$. Moreover, we have $F_R = F^{N_1 + 1} + \eps^{- \frac{N_2}{2}} F_\chi$, where $F^{N_1 + 1}$ is defined in \eqref{def:star}. First of all, by our choice of parameters $N_1, N_2$, we have in a nearly identical fashion to \eqref{forcing:est:PR:2} - \eqref{forcing:est:Pr:lin} the estimate
\begin{align}
\| (y \p_y)^k (x \p_x)^j F^{(N_1 + 1)}  \langle z \rangle^M \|_{L^2_y} \lesssim \eps^{\frac{N_1 - N_2}{2}} \langle x \rangle^{- \frac{19}{16}}. 
\end{align}
We now need to estimate the terms arising from \eqref{def:F:chi}. Indeed, 
\begin{align} \n
\| \eps^{- \frac{N_2}{2}} F_{\chi} \|_{L^2_y} \lesssim & \eps^{\frac{N_1 - N_2}{2}} \Big( \frac{1}{\langle x \rangle^{\frac 1 2}} \| \bar{v}_p \|_{L^\infty_y} + \frac{1}{\langle x \rangle^{\frac 3 4}} \| u_\chi \|_{L^\infty_y} + \langle x \rangle^{- 1} \| u_p \|_{L^\infty_y} + \langle x \rangle^{- \frac 1 2} \| \p_y u_\chi \|_{L^2_y}  \\  \n
& + \frac{\eps^{\frac 3 2}}{\langle x \rangle^{\frac 3 2}} \| u_p \|_{L^\infty_y} + \frac{1}{\langle x \rangle^{\frac 1 2}} \| u_{py} \|_{L^\infty_y} + \| \p_y^2 u_\chi \|_{L^2_y} + \| F_p^{(N_1)} \|_{L^2_y} \Big) \\
\lesssim &  \eps^{\frac{N_1 - N_2}{2}}  \langle x \rangle^{- \frac 5 4 + \sigma_\ast}.
\end{align}
Higher order $\p_x$ and $\p_y$ derivatives work in the same manner. This establishes \eqref{est:forcings:part1}.
\end{proof}

We now establish our main result: 
\begin{proof}[Proof of Theorem \ref{thm:approx}] This follows from Propositions \ref{Prop:nonlinear}, \ref{prop:PrLin}, \ref{prop:Euler}, and Lemma \ref{lemma:ref:2}.
\end{proof}

\section{Background profiles $\bar{u}, \bar{v}$} \label{subsection:background}

We recall the definition of $[\bar{u}, \bar{v}]$ from \eqref{exp:base}. In addition, for a few of the estimates in our analysis, we will require slightly more detailed information on these background profiles, in the form of decomposing into an Euler and Prandtl component. Indeed, define 
\begin{align} \label{split:split:1}
\bar{u}_P &:= \bar{u}^0_P + \sum_{i = 1}^{N_1} \eps^{\frac i 2} u^i_P  , &  \bar{u}_E &:=  \sum_{i = 1}^{N_{1}} \eps^{\frac{i}{2}}  u^i_E.
\end{align}
We will now summarize the quantitative estimates on $[\bar{u}, \bar{v}]$. 
\begin{lemma} \label{lemma:cons:1} For $0 \le j,m,k, M, l \le 20$, the following estimates are valid
\begin{align} \label{prof:u:est}
 \| \p_x^j (y\p_y)^m \p_y^k \bar{u} x^{j + \frac k 2} \|_\infty + \| \frac{1}{\bar{u}} \p_x^j \bar{u} x^j  \|_{\infty}  \le & C_{k,j} , \\ \label{prof:v:est}
 \| \p_x^j (y\p_y)^m \p_y^k \bar{v} x^{j + \frac k 2 + \frac 1 2}  \|_\infty   + \| \frac{1}{\bar{u}} \p_x^j \bar{v} x^{j + \frac 1 2}  \|_\infty  \le & C_{k,j}, \\ \label{est:Eul:piece}
\eps^{- \frac 1 2}\| \p_x^j (Y \p_Y)^l \p_Y^k \bar{u}_E \langle x \rangle^{j+k + \frac 1 2} \|_\infty + \| \p_x^j (Y \p_Y)^l \p_Y^k \bar{v}_E \langle x \rangle^{j+k + \frac 1 2} \|_\infty \le & C_{k,j} , \\ \label{est:Pr:piece}
\| \p_x^j (y\p_y)^k \p_y^l \bar{u}_P \langle z \rangle^M  \langle x \rangle^{j + \frac l 2}\|_\infty   \le & C_{k,j,M} \\ \label{est:PR:bar:v}
\| \p_x^j (y\p_y)^k \p_y^l \bar{v}_P \langle z \rangle^M  \langle x \rangle^{j + \frac l 2+ \frac 1 2}\|_\infty   \le & C_{k,j,M}.
\end{align}
\end{lemma}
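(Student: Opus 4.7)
The plan is to deduce each bound from the per-term estimates in Theorem \ref{thm:approx} by applying the triangle inequality to the decomposition \eqref{exp:base} (or equivalently \eqref{split:split:1}), and then to handle the quotient bounds appearing in \eqref{prof:u:est}--\eqref{prof:v:est} by exploiting the no-slip vanishing of $\bar{u}$ and $\bar{v}$ at $\{y=0\}$. I proceed one estimate at a time, reserving the quotient bounds for last since they require the only substantive new argument.

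For \eqref{est:Eul:piece}, I note that $\bar{u}_E = \sum_{i=1}^{N_1} \eps^{i/2} u^i_E$ and, consistent with the grouping in \eqref{exp:base}, $\bar{v}_E = \sum_{i=1}^{N_1} \eps^{(i-1)/2} v^i_E$; then \eqref{water:78}--\eqref{water:88} give termwise bounds, and summation with $\eps\le 1$ yields
\begin{align*}
\| \p_x^j (Y\p_Y)^l \p_Y^k \bar{u}_E \|_\infty \lesssim \eps^{1/2} \langle x\rangle^{-1/2-j-k}, \qquad \| \p_x^j (Y\p_Y)^l \p_Y^k \bar{v}_E \|_\infty \lesssim \langle x\rangle^{-1/2-j-k},
\end{align*}
which is \eqref{est:Eul:piece}. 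For \eqref{est:Pr:piece}--\eqref{est:PR:bar:v}, I decompose $\bar{u}_P = \bar{u}_\ast + (\bar{u}^0_p - \bar{u}_\ast) + \sum_{i\ge 1}\eps^{i/2} u^i_P$ (and analogously for $\bar{v}_P$), apply \eqref{blas:conv:1}--\eqref{blas:conv:2} to the Blasius perturbation and \eqref{water:65}--\eqref{water:54} to the higher-order layers, and verify by direct self-similar calculation from \eqref{Blasius:1}--\eqref{Blasius:3} that the Blasius profile itself obeys the same $\langle z\rangle$-weighted decay (each $\p_x$ and $\p_y$ costs $\langle x\rangle^{-1}$ and $\langle x\rangle^{-1/2}$, and the $z$-weights are absorbed since $f'(z)-1$ and $zf'(z)-f(z)$ decay super-algebraically). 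The first summands of \eqref{prof:u:est}--\eqref{prof:v:est} then follow from $\bar{u} = 1 + \bar{u}_P + \bar{u}_E$ together with the estimates just described, since the Euler pieces are of lower order by a factor $\eps^{1/2}$ relative to the Prandtl pieces.

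The quotient bounds $\|\bar{u}^{-1}\,\p_x^j \bar{u}\, x^j\|_\infty$ and $\|\bar{u}^{-1}\,\p_x^j \bar{v}\, x^{j+1/2}\|_\infty$ are the heart of the matter. The case $j = 0$ in the first is trivial, and the second reduces to bounding $\bar{v}/\bar{u}$ near the wall via Taylor expansion. For $j \ge 1$, I localize in $z$: on $\{z \ge 1\}$, the Blasius lower bound $f'(z) \gtrsim 1$ together with smallness of the perturbation (controlled by $\delta_\ast + \eps^{1/2}$) gives $\bar{u} \ge \tfrac{1}{2}$, reducing the quotient bound to the already-proved first summand of \eqref{prof:u:est}. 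On $\{z \le 1\}$, the Blasius identity $\p_y \bar{u}_\ast(x,0) = f''(0)\langle x\rangle^{-1/2}$ with $f''(0) > 0$, combined again with the smallness of the perturbation, produces the non-degeneracy
\begin{align*}
\bar{u}(x,y) \gtrsim \frac{y}{\sqrt{\langle x\rangle}} \qquad \text{for } 0 \le z \le 1.
\end{align*}

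The key cancellation is that no-slip forces $\bar{u}(x,0) = \bar{v}(x,0) = 0$ for every $x$, hence $\p_x^j \bar{u}(x,0) = \p_x^j \bar{v}(x,0) = 0$ for all $j \ge 0$. A one-step Taylor expansion in $y$, combined with the supremum bounds on $\p_y\p_x^j \bar{u}$ and $\p_y\p_x^j \bar{v}$ from the first summands of \eqref{prof:u:est}--\eqref{prof:v:est}, then produces $|\p_x^j \bar{u}(x,y)| \lesssim y\,\langle x\rangle^{-j-1/2}$ and $|\p_x^j \bar{v}(x,y)| \lesssim y\,\langle x\rangle^{-j-1}$ on $\{z\le 1\}$. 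Dividing by the near-boundary lower bound gives $\langle x\rangle^{-j}$ and $\langle x\rangle^{-j-1/2}$, respectively, which is precisely what \eqref{prof:u:est}--\eqref{prof:v:est} demand after multiplication by $x^j$ and $x^{j+1/2}$. The main obstacle is this near-boundary step: without the wall-vanishing of $\p_x^j \bar{u}$, the naive quotient would be singular as $y \downarrow 0$, so the proof really hinges on carefully tracking the no-slip condition through every layer of the expansion.
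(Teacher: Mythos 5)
Your approach matches the paper's (the paper's proof of this lemma is a one-line citation of the per-term estimates), and the core mechanism you propose for the quotient bounds -- Taylor expansion of the numerator from the no-slip wall, divided by a near-wall lower bound on $\bar{u}$ -- is the right one. One step, however, is under-justified as written. The near-wall coercivity $\bar{u}(x,y) \gtrsim y/\sqrt{\langle x\rangle}$ on $\{z \le 1\}$ does \emph{not} follow from the Blasius wall derivative being positive together with the mere $L^\infty$-smallness of the perturbation: a perturbation of size $\delta_\ast$ that is nonzero at the wall could make $\bar{u}(x,0) \ne 0$ and destroy the lower bound near $y=0$ completely. What saves the argument is that the perturbation $\bar{u}-\bar{u}_\ast$ \emph{also} vanishes at $y=0$ (the boundary conditions on each layer are arranged precisely so that $\bar{u}|_{y=0}=0$, and $\bar{u}_\ast|_{y=0}=0$), and its wall-derivative is small in a compatible way: from \eqref{blas:conv:1}, \eqref{water:65}, \eqref{water:88} one has $\|\p_y(\bar{u}-\bar{u}_\ast)\|_\infty \lesssim (\delta_\ast+\sqrt{\eps})\,\langle x\rangle^{-3/4+\sigma_\ast}$, so $|\bar{u}-\bar{u}_\ast| \lesssim (\delta_\ast+\sqrt{\eps})\,z$ on $\{z\le 1\}$, which is dominated by the Blasius bound $\bar{u}_\ast \gtrsim z$. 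This is exactly what the paper proves separately as Lemma~\ref{lemma:bofz}, estimates \eqref{samezies:1} and \eqref{prime:pos}; you are implicitly folding that content into this proof, which is a fine reorganization, but you should spell out the wall-vanishing of the perturbation rather than appealing only to its smallness -- particularly since you already flagged the corresponding wall-vanishing as essential for the numerator. Everything else in the proposal is correct.
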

\begin{proof} This is proven by combining estimates \eqref{water:1}, \eqref{blas:conv:1}, \eqref{blas:conv:2}, \eqref{water:4}, \eqref{water:5}, \eqref{water:7}, \eqref{water:8}.
\end{proof}

We will need estimates which amount to showing that $\bar{u}$ remains a small perturbation of $\bar{u}^0_p$ for all $x$. 
\begin{lemma} \label{lemma:bofz} Define a monotonic function $b(z) := \begin{cases} z \text{ for } 0 \le z \le \frac{3}{4} \\ 1 \text{ for } 1 \le z  \end{cases}$, where $b \in C^\infty$. Then 
\begin{align} \label{est:ring:1}
&\| \p_y^j \p_x^k (\bar{u} - \bar{u}^0_p) \langle x \rangle^{\frac j 2 + k + \frac{1}{50}} \|_{\infty} \le C_{k,j} \sqrt{\eps}, \\ \label{samezies:1}
&1 \lesssim \frac{\bar{u}^0_p}{b(z)}  \lesssim 1 \text{ and } 1 \lesssim \Big| \frac{\bar{u}}{\bar{u}^0_p} \Big| \lesssim 1, \\ \label{prime:pos}
&|\bar{u}_y|_{y = 0}(x)| \gtrsim \langle x \rangle^{- \frac 1 2}. 
\end{align}
\end{lemma}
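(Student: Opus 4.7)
The plan is to exploit the decomposition
\[
\bar u - \bar u^0_p = \sum_{i=1}^{N_1} \eps^{i/2}(u^i_E + u^i_P)
\]
coming from \eqref{exp:base}, together with the decay estimates already established in Theorem \ref{thm:approx} and Proposition \ref{Prop:nonlinear}. For \eqref{est:ring:1}, I would simply differentiate this sum term by term and apply the triangle inequality: \eqref{water:65} controls $\p_x^k \p_y^j u^i_P$ by $\langle x \rangle^{-1/4 - k - j/2 + \sigma_\ast}$, while $\p_x^k \p_y^j u^i_E = \eps^{j/2} \p_x^k \p_Y^j u^i_E$ is controlled by \eqref{water:88}. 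Summing the geometric-in-$\eps^{1/2}$ series yields the prefactor $\sqrt{\eps}$, and since $\sigma_\ast = 1/10{,}000$ is much smaller than $1/50$, the slowest-decaying (Prandtl) contribution fits inside the claimed $\langle x \rangle^{-j/2 - k - 1/50}$ bound.

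For the first ratio in \eqref{samezies:1}, I would first note that $\bar u_\ast / b(z) \sim 1$ is a direct consequence of the ODE \eqref{Blasius:3}: since $f'(0) = 0$, $f''(0) > 0$, and $f'$ is strictly increasing to $1$ at infinity, $f'(z)/z$ is positive and bounded on $[0, 3/4]$ and $f'(z)$ is uniformly in $(0, 1]$ on $[3/4, \infty)$. Writing $\bar u^0_p = \bar u_\ast + \delta_\ast \tilde u^0_p$, the Taylor identity $\tilde u^0_p(x, y) = y \int_0^1 \tilde u^0_{py}(x, \theta y)\,\ud\theta$ together with \eqref{pnld1} at $(k, j) = (0, 1)$ gives $|\tilde u^0_p / b(z)| \lesssim \langle x \rangle^{-1/4 + \sigma_\ast}$ in the inner region $z \le 1$; in the outer region $b(z) \sim 1$ and \eqref{pnld1} at $(k, j) = (0, 0)$ produces the same bound. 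Smallness of $\delta_\ast$ then closes the ratio.

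For the second ratio in \eqref{samezies:1}, the crucial point is the matching boundary condition $(u^i_E + u^i_P)|_{y = 0} = 0$ from \eqref{LP:BC:1}, which forces $(\bar u - \bar u^0_p)(x, 0) = 0$. A Taylor expansion in $y$ together with the same triangle-inequality argument as in the proof of \eqref{est:ring:1} — but invoking the sharper decay $\langle x \rangle^{-3/4 + \sigma_\ast}$ from \eqref{water:65} at $j = 1$ (since the Euler $\p_y$-derivatives only contribute $\sqrt{\eps}\langle x\rangle^{-3/2}$, which is subdominant) — gives $|\bar u - \bar u^0_p|(x, y) \lesssim \sqrt{\eps}\, y\, \langle x \rangle^{-3/4 + \sigma_\ast}$. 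Dividing by $\bar u^0_p \gtrsim y / \sqrt{\langle x \rangle}$ in the region $z \le 1$ (supplied by the first part of \eqref{samezies:1}) and by $\bar u^0_p \gtrsim 1$ elsewhere yields $|(\bar u - \bar u^0_p)/\bar u^0_p| = O(\sqrt{\eps})$, uniformly in $(x, y)$, and this closes $|\bar u / \bar u^0_p| \sim 1$ via $\bar u / \bar u^0_p = 1 + (\bar u - \bar u^0_p)/\bar u^0_p$.

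Finally, for \eqref{prime:pos}, I would expand
\[
\bar u_y(x, 0) = \bar u_{\ast y}(x, 0) + \delta_\ast \tilde u^0_{py}(x, 0) + \sum_{i = 1}^{N_1} \eps^{i/2}\bigl(u^i_{Py}(x, 0) + \sqrt{\eps}\, u^i_{EY}(x, 0)\bigr),
\]
observe that $\bar u_{\ast y}(x, 0) = f''(0)\langle x \rangle^{-1/2}$ with $f''(0) > 0$, and bound every other summand by $O(\delta_\ast + \sqrt{\eps})\langle x \rangle^{-3/4 + \sigma_\ast}$ using \eqref{pnld1}, \eqref{water:65}, and \eqref{water:88}. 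The Blasius term dominates by the uniformly small multiplicative factor $(\delta_\ast + \sqrt{\eps})\langle x \rangle^{-1/4 + \sigma_\ast}$. The main technical point of the argument is the inner estimate of $\bar u/\bar u^0_p$ near $y = 0$: both numerator and denominator vanish there, and a naive use of \eqref{est:ring:1} alone gives only $|\bar u - \bar u^0_p| \lesssim \sqrt{\eps}$, which after dividing by $\bar u^0_p \sim y/\sqrt{\langle x\rangle}$ would produce a spurious singularity as $y \downarrow 0$. The matching condition $(u^i_E + u^i_P)|_{y = 0} = 0$ is precisely what supplies the extra factor of $y$ needed to cancel this singularity, and identifying and exploiting this cancellation is the conceptual heart of the proof.
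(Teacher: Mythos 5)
Your proposal is correct and follows essentially the same route as the paper: \eqref{est:ring:1} by summing the expansion against \eqref{water:65}, \eqref{water:88} and comparing exponents; \eqref{samezies:1} by first pinning $\bar{u}_\ast/b(z)\sim 1$ from the Blasius ODE and then absorbing the small perturbations $\delta_\ast\tilde{u}^0_p$ and $\sum_i\eps^{i/2}(u^i_E+u^i_p)$, using the vanishing at $y=0$ (Taylor/Hardy factor of $y$) exactly where both numerator and denominator degenerate; and \eqref{prime:pos} by expanding $\bar{u}_y(x,0)$ around the Blasius shear \eqref{Blas:prop:1} and bounding the remainders. The only cosmetic difference is that the paper obtains the lower bound on $\bar{u}^0_p/z$ via the multiplicative identity $z/\bar{u}^0_p = \big(1-(\bar{u}^0_p-\bar{u}_\ast)/\bar{u}_\ast\big)^{-1}\,z/\bar{u}_\ast$, whereas you treat both bounds additively, but the underlying estimate, $|\tilde u^0_p/z|\lesssim\langle x\rangle^{-1/4+\sigma_\ast}$, is the same.
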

\begin{proof} For estimate \eqref{est:ring:1}, we simply appeal to the definition \eqref{exp:base} and to the corresponding estimates \eqref{water:65} - \eqref{water:88}, and subsequently the fact that $- \frac 14 + \sigma_\ast < - \frac{1}{50}$ by our choice of $\sigma_\ast$ in Theorem \ref{thm:approx}.  

We now move to the first assertion in \eqref{samezies:1}. For the upper bound $|\bar{u}^0_p| \lesssim z$,we have 
\begin{align}
|\frac{\bar{u}^0_p}{z}| \le & |\frac{\bar{u}_\ast}{z}|  +  |\frac{\bar{u}^0_p - \bar{u}_\ast}{z}| \lesssim \sqrt{x} \| \p_y \bar{u}_\ast \|_{L^\infty_y} + \sqrt{x} \| \p_y (\bar{u}^0_p - \bar{u}_\ast) \|_{L^\infty_y} \lesssim 1 + \delta_\ast \langle x \rangle^{- \frac 1 4 + \sigma_\ast}, 
\end{align}
where we have invoked \eqref{water:1} and \eqref{blas:conv:1}. For the lower bound $\bar{u}^0_p \gtrsim z$ for $0 \le z \le 1$, we appeal to the elementary algebraic identity 
\begin{align}
\frac{z}{\bar{u}^0_p} = \Big( \frac{1}{1- \frac{\bar{u}^0_p - \bar{u}_\ast}{\bar{u}_\ast}} \Big) \frac{z}{\bar{u}_\ast}, 
\end{align}
after which we invoke that $\bar{u}_\ast \gtrsim z$ for $0 \le z \le 1$, and the estimate 
\begin{align}
|\frac{\bar{u}^0_p - \bar{u}_\ast}{\bar{u}_\ast}| \lesssim \delta_\ast \langle x \rangle^{- \frac 1 4 + \sigma_\ast} \text{ for } 0 \le z \le 1, 
\end{align}

For the second assertion in \eqref{samezies:1}, we note that it suffices to establish $1 \lesssim |\frac{\bar{u}}{b(z)}| \lesssim 1$. In addition, this means that we can localize to $0 \le z \le 1$. Using \eqref{exp:base}
\begin{align} \n
\Big|\frac{\bar{u}}{\bar{u}^0_p} - 1\Big| \lesssim &\sum_{i = 1}^{N_1} \eps^{\frac i 2} \| \frac{u^i_E + u^i_p}{b(z)} \|_\infty \lesssim \sum_{i = 1}^{N_1} \eps^{\frac i 2} ( \| u^i_E + u^i_p \|_\infty + \sqrt{x} \| \frac{u^i_E + u^i_p}{y} \|_\infty )\\ \lesssim & \sqrt{\eps} ( \langle x \rangle^{- \frac 1 2} + \langle x \rangle^{- \frac 1 4 + \sigma_\ast} ).
\end{align}

We now arrive at estimate \eqref{prime:pos}. For this, we have 
\begin{align} \n
\bar{u}_y(x, 0) = & \p_y \bar{u}_{\ast}(x, 0) + \p_y (\bar{u}^0_p - \bar{u}_{\ast})(x, 0) + \sum_{i = 1}^{N_1} \eps^{\frac i 2} (\sqrt{\eps} u^i_{EY}(x, 0) + u^i_{Py}(x, 0)) \\
 \gtrsim &  \langle x \rangle^{- \frac 1 2} - \delta_\ast \langle x \rangle^{- \frac 3 4 + \sigma_\ast} + \sum_{i =1}^{N_1} \eps^{\frac i 2} (\sqrt{\eps} \langle x \rangle^{-\frac 32} - \langle x \rangle^{- \frac 3 4 + \sigma_\ast} ) \gtrsim \langle x \rangle^{- \frac 1 2}, 
\end{align}
where we have used the first term above satisfies $\p_y \bar{u}_\ast(x, 0) \gtrsim \langle x \rangle^{- \frac 1 2}$ by properties of the Blasius profile, \eqref{Blas:prop:1}, as well as the estimates \eqref{blas:conv:1}, \eqref{water:88}, and \eqref{water:65}. 
\end{proof}

We will need to remember the equations satisfied by the approximate solutions, $[\bar{u}, \bar{v}]$, which we state in the form of a lemma. 
\begin{lemma}  \label{lemma:cons:2} The following identity holds
\begin{align} \label{expression:1}
\bar{u} \bar{u}_x + \bar{v} \bar{u}_y = \bar{u}^0_{pyy} + \zeta,
\end{align}
where $\zeta$ is defined by 
\begin{align} \label{def:zeta}
\zeta :=  \sum_{i = 1}^{N_1} (\eps^{\frac i 2} u^{i}_{Ex} + \mathcal{F}_E^{(i)} + \mathcal{G}_E^{(i)}) + \sum_{i = 1}^{N_1} \eps^{\frac i 2} u^i_{pyy} + \sum_{i = 1}^{N_1} \eps^{\frac i 2} F^{(i)}_p + \mathcal{E}_{N_1}^{(1)}  + \mathcal{E}_{N_1}^{(2)}  + \mathcal{F}_{E}^{N_1 + 1} + \mathcal{G}_E^{(N_1 + 1)}.
\end{align}
\end{lemma}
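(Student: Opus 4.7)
The statement is an algebraic identity at the level of the background profiles, so the plan is to substitute the expansion \eqref{exp:base} into $\bar{u}\bar{u}_x + \bar{v}\bar{u}_y$ and reorganize the result using the equations that each term in the expansion is known to satisfy. Concretely, I would split $\bar{u}=1+\bar{u}_P+\bar{u}_E$ with $\bar{u}_P, \bar{u}_E$ as in \eqref{split:split:1}, and similarly decompose $\bar{v}$ into a Prandtl part and an Euler part. Expanding the product and writing $\bar{u}\bar{u}_x + \bar{v}\bar{u}_y$ as the sum of a ``pure Prandtl'' piece, a ``pure Euler'' piece, and a ``mixed'' piece allows the three types of contributions on the right-hand side of \eqref{def:zeta} to be isolated cleanly.

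First, for the pure Prandtl piece, the leading order group of terms $\bar{u}^0_p \p_x \bar{u}^0_p + \bar{v}^0_p \p_y \bar{u}^0_p$ is replaced by $\p_y^2\bar{u}^0_p$ using the nonlinear Prandtl equation \eqref{Pr:uB}. The cross terms of order $\eps^{i/2}$ with $i\ge 1$ coming from $\bar{u}^0_p \p_x(\eps^{i/2}u^i_p)+(\eps^{i/2}u^i_p)\p_x\bar{u}^0_p+\bar{v}^0_p\p_y(\eps^{i/2}u^i_p)+(\eps^{i/2}\bar{v}^i_p)\p_y\bar{u}^0_p$ are, by the linearized Prandtl equation \eqref{eq:lin:Pr:HYU}, exactly $\eps^{i/2}(\p_y^2 u^i_p + F^{(i)}_p)$; this accounts for the $\sum_i \eps^{i/2}u^i_{pyy}$ and $\sum_i \eps^{i/2}F^{(i)}_p$ terms in \eqref{def:zeta}. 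The purely-quadratic (in $i,j\ge 1$) Prandtl--Prandtl terms $\eps^{(i+j)/2}(u^i_p\p_x u^j_p + v^i_p\p_y u^j_p)$ are precisely the cross contributions that feed into the definitions of $F^{(i)}_p$ for $i\le N_1$, plus a high-order residual which gets absorbed into $\mathcal{E}^{(1)}_{N_1}$.

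Second, for the Euler contributions, I would use the harmonicity / Euler equations \eqref{Eul:harm} satisfied by $[u^i_E,v^i_E]$, which generate the $\eps^{i/2}u^i_{Ex}$ terms on the right-hand side (these come from the leading Euler advection by the constant $u^0_E=1$) together with the Euler--Euler coupling terms that were collected into $\mathcal{F}^{(i)}_E$ and $\mathcal{G}^{(i)}_E$ in Appendix \ref{app:A}. Finally, the ``mixed'' Prandtl--Euler interactions -- of the form $u^i_E \p_x u^j_p$, $\bar{v}^j_E \p_y u^i_p$, etc. -- are precisely what appears in the definitions of $\mathcal{F}^{(i)}_E$ and $\mathcal{G}^{(i)}_E$; a careful pairing by order of $\eps^{1/2}$ ensures everything collects as in \eqref{def:zeta}, with the tail at order $\eps^{(N_1+1)/2}$ ending up inside $\mathcal{F}^{N_1+1}_E + \mathcal{G}^{N_1+1}_E$ and the remaining truncation residuals stored in $\mathcal{E}^{(1)}_{N_1}, \mathcal{E}^{(2)}_{N_1}$.

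The only nontrivial obstacle is bookkeeping: one has to make sure that the two-scale nature of the expansion (with $y=Y/\sqrt{\eps}$) is handled consistently, so that when an Euler quantity $u^i_E(x,Y)$ is multiplied by a Prandtl quantity $u^j_p(x,y)$ one Taylor-expands $u^i_E$ at $Y=0$ to match powers of $\sqrt{\eps}$ correctly; those Taylor remainders are exactly what sit inside $\mathcal{E}^{(1)}_{N_1}$ and $\mathcal{E}^{(2)}_{N_1}$. Once this grouping is performed, the identity \eqref{expression:1} follows directly, since it is simply a relabeling of the same multiscale computation that was used in Appendix \ref{app:A} to derive the governing equations \eqref{BL:0:intro:intro}, \eqref{eq:lin:Pr:HYU}, and \eqref{Eul:harm}. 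I therefore expect essentially no estimation to be required -- the lemma is algebraic -- and the reference to Appendix \ref{app:A} together with the equations cited above closes the argument.
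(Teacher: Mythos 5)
Your proposal takes essentially the same route as the paper: the paper's proof is a one-line reference to identities \eqref{UUx} and \eqref{VVy}, and your write-up amounts to re-deriving the content of those identities by substituting \eqref{exp:base} into $\bar{u}\bar{u}_x+\bar{v}\bar{u}_y$, invoking the nonlinear Prandtl equation at leading order and the linearized Prandtl equations \eqref{rustic:1} for the correctors, and collecting the Euler and tail contributions. One bookkeeping imprecision is worth pointing out. Once you have used \eqref{rustic:1} to replace $\eps^{i/2}\big(\bar{u}^0_p\p_x u^i_p+\bar{u}^0_{px}u^i_p+\bar{v}^0_p u^i_{py}+\bar{u}^0_{py}\bar{v}^i_p\big)$ by $\eps^{i/2}(u^i_{pyy}+F^{(i)}_p)$, the Prandtl--Prandtl and mixed quadratic products collected in $\mathcal{F}^{(i)}_p$, $\mathcal{G}^{(i)}_p$ (see \eqref{Fpi:cal}, \eqref{Gpi}) \emph{remain} in the sum from \eqref{UUx}--\eqref{VVy} as their own terms; they are not ``absorbed'' into $F^{(i)}_p$ at that point, since $F^{(i)}_p$ is defined in \eqref{Fpi} via $-\eps^{-i/2}(\mathcal{F}^{(i)}_p+\mathcal{G}^{(i)}_p+\dots)$, i.e., with the opposite sign and additional pressure/viscosity pieces. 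Your phrase ``the purely-quadratic Prandtl--Prandtl terms... feed into the definitions of $F^{(i)}_p$'' therefore conflates two things and would lead to a double-count or sign error if taken literally. A cleaner version of your argument simply carries the explicit $\sum_{i=1}^{N_1}(\mathcal{F}^{(i)}_p+\mathcal{G}^{(i)}_p)$ from \eqref{UUx}--\eqref{VVy} along until the end; it is then apparent that this sum belongs alongside the terms already listed in \eqref{def:zeta} (or equivalently combines with $\sum_i\eps^{i/2}F^{(i)}_p$ via \eqref{Fpi}). Otherwise the strategy is correct and matches the paper.
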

\begin{proof} We appeal to the identities \eqref{UUx} and \eqref{VVy} developed in the Appendix, which verify the identity \eqref{expression:1}.
\end{proof}

We now define the auxiliary quantity 
\begin{align} \label{def:alpha}
&\alpha := \bar{u} \bar{v}_x + \bar{v} \bar{v}_y, 
\end{align}
We record relevant estimates on these quantities now. 
\begin{lemma}  \label{lemma:cons:3} For any $j, k, m \ge 0$,
\begin{align} \label{S:0}
|(x\p_x)^k (y\p_y)^m \zeta|  \lesssim &\sqrt{\eps} \langle x \rangle^{- (1 + \frac{1}{50})}  \\ \label{est:zeta:2}
|(x \p_x)^k (x^{\frac 1 2} \p_y)^j \zeta| \lesssim & \sqrt{\eps} \langle x \rangle^{- (1 + \frac{1}{50})}  \\ \label{S:1}
|(x \p_x)^k (y \p_y)^m \alpha| \lesssim & \bar{u} \langle x \rangle^{-\frac 3 2}, 
\end{align}
\end{lemma}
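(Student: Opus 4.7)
The plan is to estimate $\zeta$ and $\alpha$ by unpacking their definitions term-by-term and invoking the pointwise bounds already assembled in Theorem \ref{thm:approx} and Lemmas \ref{lemma:cons:1}--\ref{lemma:cons:2}. The key structural point is that every summand is either of Prandtl type, depending self-similarly on $z = y/\sqrt{\langle x\rangle}$, or of Euler type carrying an additional $\sqrt{\eps}$ factor and strong $\langle x\rangle$-decay.

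For the bounds \eqref{S:0}--\eqref{est:zeta:2}, I would expand \eqref{def:zeta} into three groups. First, the Euler-originated forcings $\eps^{i/2} u^i_{Ex}$ and $\mathcal{F}_E^{(i)}$, $\mathcal{G}_E^{(i)}$ are controlled by \eqref{water:78}--\eqref{water:88} combined with the $z$-weighted estimates of the lower-order Prandtl layers; each contributes $O(\sqrt{\eps}\langle x\rangle^{-3/2})$ or better. Second, the Prandtl diffusion and forcing terms $\eps^{i/2} u^i_{pyy}$ and $\eps^{i/2} F_p^{(i)}$ are handled directly by \eqref{water:65} and \eqref{forcing:est:PR:2}, yielding $\sqrt{\eps}\langle x\rangle^{-5/4+\sigma_\ast}$ and $\sqrt{\eps}\langle x\rangle^{-23/16}$. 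Third, the top-layer error terms $\mathcal{E}_{N_1}^{(1)}$, $\mathcal{E}_{N_1}^{(2)}$, $\mathcal{F}_E^{(N_1+1)}$, $\mathcal{G}_E^{(N_1+1)}$ carry a surplus $\eps^{(N_1-N_2)/2}$ from the choice $N_1 = 400$, $N_2 = 200$. Since $\sigma_\ast = \frac{1}{10000} \ll \frac{1}{50}$, every rate beats $\langle x\rangle^{-(1+1/50)}$. To pass to the derivatives, I would use that $(x\p_x)^k(y\p_y)^m$ and $(x\p_x)^k(x^{1/2}\p_y)^j$ are scaling-invariant in $z$: applied to $\langle x\rangle^{-\alpha}\Phi(x,z)$ with $\Phi$ bounded in $z$-weighted $L^\infty$, they return $\langle x\rangle^{-\alpha}$ times a bounded function, up to harmless corrections from $\p_x\langle x\rangle^{-\alpha}$. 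Every summand of $\zeta$ has precisely this structure, so no decay is lost.

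For \eqref{S:1}, I would write $\alpha = \bar{u}\bar{v}_x + \bar{v}\bar{v}_y$. The first term is immediate: $|\bar{u}\bar{v}_x|\lesssim \bar{u}\langle x\rangle^{-3/2}$ by \eqref{prof:v:est}. The second is handled via the $\bar{u}$-weighted bound $|\bar{v}|\lesssim \bar{u}\langle x\rangle^{-1/2}$ that the second half of \eqref{prof:v:est} supplies for $j=0$, combined with $|\bar{v}_y|\lesssim \langle x\rangle^{-1}$. For mixed $(x\p_x)^k(y\p_y)^m$ derivatives, I would split $\bar{v} = \bar{v}_P + \bar{v}_E$ following \eqref{split:split:1} and verify that both $\bar{v}_P/\bar{u}$ and $\bar{v}_E/\bar{u}$ are bounded (together with all scaling-invariant derivatives) using the self-similar Prandtl structure of $\bar{v}_P$ and the lower bound $\bar{u}\gtrsim 1$ for $z\ge 1$ from \eqref{samezies:1}.

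The main obstacle is precisely this mixed-derivative bookkeeping on $\bar{v}/\bar{u}$: it rests on $\bar{v}$ vanishing at $y=0$ with the same degeneracy as $\bar{u}$, and on $y\p_y$ acting as $z\p_z$ inside the boundary-layer zone, which preserves $z$-weighted boundedness. Once these structural points are in place, the proof of \eqref{S:1} reduces to Leibniz applied to three bounded factors, and the estimation of $\zeta$ is a routine (if lengthy) enumeration using the estimates from Theorem \ref{thm:approx}.
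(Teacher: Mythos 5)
Your approach matches the paper's, which simply asserts that the bounds follow by substituting \eqref{def:zeta} into the aggregated estimates \eqref{prof:u:est}--\eqref{est:PR:bar:v}; you have filled in the same term-by-term bookkeeping in more detail, and the structural observation that $(x\p_x)^k(y\p_y)^m$ and $(x\p_x)^k(x^{1/2}\p_y)^j$ preserve $z$-weighted boundedness is exactly what those aggregated estimates encode. One small slip: the top-layer error terms $\mathcal{E}_{N_1}^{(1)}$, $\mathcal{E}_{N_1}^{(2)}$, $\mathcal{F}_E^{(N_1+1)}$, $\mathcal{G}_E^{(N_1+1)}$ in $\zeta$ carry $\eps^{N_1/2}$ directly (the $\eps^{(N_1-N_2)/2}$ factor you cite arises instead in the estimation of $F_R = F^{N_1+1} + \eps^{-N_2/2}F_\chi$ in Lemma \ref{lemma:ref:2}), but since $\eps^{N_1/2}\ll\sqrt{\eps}$ this does not affect the conclusion.
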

\begin{proof} This proof follows upon combining \eqref{def:zeta} and the estimates \eqref{prof:u:est} - \eqref{est:PR:bar:v}. 
\end{proof}

\appendix 

\section{Derivation of Approximate Solution Equations} \label{app:A}

In this section, we perform a detailed derivation of all equations satisfied by the various terms in expansion \eqref{exp:base}. 

\subsection{Asymptotic Expansions}

We first calculate the quadratic expression
\begin{align} \n
U^\eps \p_x U^\eps = & \Big( \bar{u}^0_p + \sum_{i = 1}^{N_1} \eps^{\frac i 2} (u^i_E + u^i_p) + \eps^{\frac{N_2}{2}} u \Big) \Big( \bar{u}^0_{px} + \sum_{j = 1}^{N_1} \eps^{\frac j 2} (\p_x u^j_E + \p_x u^j_p) + \eps^{\frac{N_2}{2}} u_x \Big) \\ \n
= &\bar{u}^0_p \bar{u}^0_{px} + \sum_{i = 1}^{N_1} \Big( \eps^{\frac i 2} \Big( \bar{u}^0_p \p_x u^i_p + \bar{u}^0_{px} u^i_p\Big) +  \mathcal{F}^{(i)}_p \Big)   + \sum_{i = 1}^{N_1} \Big( \eps^{\frac i 2} \p_x u^i_E + \mathcal{F}_E^{(i)} \Big)   \\ \label{UUx}
& + \eps^{\frac{N_2}{2}} \Big( \bar{u} \p_x u  + u \p_x \bar{u} \Big) + \eps^{N_2} uu_x + \mathcal{E}_{N_1}^{(1)} + \mathcal{F}_{E}^{N_1 + 1},  
\end{align}
where we have denoted, 

\begin{align} \n
\mathcal{F}^{(i)}_p := & \eps^{\frac{i-1}{2}} \p_x u_p^{i-1} \sum_{j = 1}^{i} \eps^{\frac j 2} u^j_E  + \eps^{\frac i 2} u^i_E \sum_{k = 0}^{i-2} \eps^{\frac k 2} \p_x u^k_p + \eps^{\frac{i-1}{2}} u^{i-1}_p \sum_{j = 1}^{i} \eps^{\frac j 2} \p_x u^j_E \\ \label{Fpi:cal}
& + \eps^{\frac i 2} \p_x u^i_E \sum_{j = 0}^{i-2} \eps^{\frac j 2} u^j_p + \eps^{\frac{i-1}{2}} u^{i-1}_p \sum_{k = 1}^{i-1} \eps^{\frac k 2} \p_x u^k_p + \eps^{\frac{i-1}{2}} \p_x u^{i-1}_p \sum_{k = 1}^{i-2} \eps^{\frac k 2} u^k_p
\end{align}
\begin{align} \label{FEi}
\mathcal{F}^{(i)}_E := & \eps^{\frac{i-1}{2}} u^{i-1}_E \sum_{j = 1}^{i-1} \eps^{\frac j 2} \p_x u^j_E + \eps^{\frac{i-1}{2}} \p_x u^{i-1}_E \sum_{j = 1}^{i-2} \eps^{\frac j 2} u^j_E, \\ \n
\mathcal{E}_{N_1}^{(1)} := & \eps^{\frac{N_1}{2}} \p_x u^{N_1}_p \sum_{j = 1}^{N_1} \eps^{\frac j 2} u^j_E + \eps^{\frac{N_1}{2}} u^{N_1}_p \sum_{j = 1}^{N_1} \eps^{\frac j 2} \p_x u^j_E + \eps^{\frac{N_1}{2}} u^{N_1}_p \sum_{k = 1}^{N_1} \eps^{\frac k 2} \p_x u^k_p \\ \label{EN1}
& + \eps^{\frac{N_1}{2}} \p_x u^{N_1}_p \sum_{k = 1}^{N_1 - 1} \eps^{\frac k 2 } u^k_p. 
\end{align}

We next calculate 
\begin{align} \n
V^\eps \p_y U^\eps = & \Big( (\bar{v}^0_p + \bar{v}^1_e) + \sum_{i = 1}^{N_1 - 1} \eps^{\frac i 2} (v^i_p + v^{i+1}_E) + \eps^{\frac{N_1}{2}} v^{N_1}_p + \eps^{\frac{N_2}{2}}v \Big) \\ \n
& \times \Big( \bar{u}_y + \sum_{i = 1}^{N_1} \eps^{\frac{i}{2}} (\sqrt{\eps} u^i_{EY} + u^i_{py}) + \eps^{\frac{N_2}{2}} u_y  \Big) \\ \n
= & \bar{v}^0_p \bar{u}^0_{py} + \sum_{i = 1}^{N_1} \Big( \eps^{\frac i 2} \Big( \bar{v}^0_p u^i_{py} + \bar{u}^0_{py} \bar{v}^i_p \Big) +  \mathcal{G}_p^{(i)} \Big)   +  \sum_{i = 1}^{N_1} \mathcal{G}_E^{(i)}  \\ \label{VVy}
& + \mathcal{E}_{N_1}^{(2)} + \mathcal{G}_{E}^{(N_1+1)}+  \eps^{\frac{N_2}{2}} \Big( \bar{v} \p_y u  + \p_y \bar{u} v \Big) + \eps^{N_2} v u_y
\end{align}
where we have defined 
\begin{align} \n
\mathcal{G}_p^{(i)} := & \eps^{\frac{i-1}{2}} v^{i-1}_p \sum_{k = 1}^{i} \eps^{\frac{k+1}{2}} u^k_{eY} + \eps^{\frac{i+1}{2}} u^i_{eY} \sum_{k = 0}^{i-2} \eps^{\frac{k}{2}} v^k_p + \eps^{\frac{i-1}{2}} \bar{v}^{i-1}_p \sum_{k = 1}^{i-1} \eps^{\frac k 2} u^k_{py} \\ \label{Gpi}
&+ \eps^{\frac{i-1}{2}} u^{i-1}_{py} \sum_{k = 1}^{i - 2} \eps^{\frac k 2} \bar{v}^k_p + \eps^{\frac{i-1}{2}} u^{i-1}_{py} \sum_{j = 1}^{i} \eps^{\frac{j-1}{2}} \bar{v}^j_E + \eps^{\frac{i-1}{2}} \bar{v}^i_E \sum_{k = 0}^{i-2} \eps^{\frac k 2} u^k_{py}, \\ \label{Gei}
\mathcal{G}_E^{(i)} := &\eps^{\frac{i-2}{2}} v^{i-1}_E \sum_{k = 1}^{i-1} \eps^{\frac{k+1}{2}} u^k_{eY} + \eps^{\frac i 2} u^{i-1}_{eY} \sum_{j = 1}^{i-2} \eps^{\frac{j-1}{2}} v^j_E, \\ \n
\mathcal{E}_{N_1}^{(2)} := & \eps^{\frac{N_1}{2}} v^{N_1}_p \sum_{k = 1}^{N_1} \eps^{\frac{k+1}{2}} u^k_{eY} + \eps^{\frac{N_1}{2}} \bar{v}^{N_1}_p \sum_{k = 1}^{N_1} \eps^{\frac k 2} u^k_{py} + \eps^{\frac{N_1}{2}} u^{N_1}_{py} \sum_{k = 1}^{N_1 - 1}\eps^{\frac k 2} \bar{v}^k_p \\ \label{EN2}
& + \eps^{\frac{N_1}{2}} u^{N_1}_{py} \sum_{k = 1}^{N_1} \eps^{\frac{k-1}{2}} \bar{v}^k_E. 
\end{align}

We now calculate 
\begin{align} \label{UVx}
U^\eps V^\eps_x = & \sum_{i = 1}^{N_1} \eps^{\frac{i-1}{2}} v^i_{Ex} + \sum_{i = 1}^{N_1 + 1} \mathcal{H}_E^{(i)} + \sum_{i = 1}^{N_1 + 1} \mathcal{H}_p^{(i)},  
\end{align}
where 
\begin{align} \label{Hei}
\mathcal{H}_E^{(i)} := &\eps^{\frac{i-2}{2}} v^{i-1}_{Ex} \sum_{j = 1}^{i-1} \eps^{\frac j 2} u^j_E + \eps^{\frac{i-1}{2}} u^{i-1}_E \sum_{k = 1}^{i-2} \eps^{\frac{k-1}{2}} v^k_{Ex}, \\ \n
\mathcal{H}_p^{(i)} := & \eps^{\frac{i-1}{2}} u^{i-1}_p \sum_{k = 0}^{i-1} \eps^{\frac k 2} v^k_{px} + \eps^{\frac{i-1}{2}} v^{i-1}_{px} \sum_{j = 0}^{i-2} \eps^{\frac j 2} u^j_p + \eps^{\frac{i-1}{2}} u_p^{i-1} \sum_{k = 1}^{i} \eps^{\frac{k-1}{2}} v^k_{Ex} \\ \label{Hpi}
& + \eps^{\frac{i-1}{2}} v^{i}_{Ex} \sum_{j = 0}^{i-2} \eps^{\frac j 2} u^j_p + \eps^{\frac{i}{2}} u_E^{i} \sum_{k = 0}^{i-1} \eps^{\frac k 2} v^k_{px} + \eps^{\frac{i-1}{2}} v^{i-1}_{px} \sum_{k = 0}^{i-1} \eps^{\frac k 2} u^k_E, 
\end{align}
and by abuse of notation, for $N_1 + 1$ we set
\begin{align} \n
\mathcal{H}_p^{(N_1 + 1)} := & \eps^{\frac{N_1}{2}} u^{N_1}_p \sum_{k = 0}^{N_1} \eps^{\frac k 2} v^k_{px} + \eps^{\frac{N_1}{2}} v^{N_1}_{px} \sum_{k = 0}^{N_1 - 1} \eps^{\frac k 2} u^k_p + \eps^{\frac{N_1}{2}} u^{N_1}_p \sum_{k = 1}^{N_1} \eps^{\frac{k-1}{2}} v^k_{Ex} \\ \label{HpN}
& + \eps^{\frac{N_1}{2}} v^{N_1}_{px} \sum_{k = 0}^{N_1} \eps^{\frac k 2} u^k_E. 
\end{align}

We now calculate 
\begin{align} \label{VVyreal}
V^\eps V^\eps_y = \sum_{i = 1}^{N_1 + 1} \mathcal{J}_E^{(i)} + \sum_{i = 1}^{N_1 + 1} \mathcal{J}_{P}^{(i)}
\end{align}
where
\begin{align}
\mathcal{J}_{E}^{i} := & \eps^{\frac{i-2}{2}} v^{i-1}_E \sum_{k = 1}^{i -1} \eps^{\frac k 2} v^k_{EY} + \eps^{\frac{i-1}{2}} v^{i-1}_{EY} \sum_{k = 1}^{i-2} \eps^{\frac{k-1}{2}} v^k_E ,\\ \n
\mathcal{J}_{P}^{i} := & \eps^{\frac{i-1}{2}} v^i_E \sum_{k = 0}^{i-1} \eps^{\frac k 2} v^k_{py} + \eps^{\frac{i-1}{2}} v^{i-1}_{py} \sum_{k = 1}^{i-1} \eps^{\frac{k-1}{2}} v^k_E + \eps^{\frac{i-1}{2}} v^{i-1}_p \sum_{k = 1}^{i} \eps^{\frac k 2} v^k_{EY} \\ \label{def:JPi}
& + \eps^{\frac i 2} v^i_{EY} \sum_{k = 0}^{i-2} \eps^{\frac k 2} v^k_p + \eps^{\frac{i-1}{2}} v^{i-1}_p \sum_{k = 0}^{i-1} \eps^{\frac k 2} v^k_{py} + \eps^{\frac{i-1}{2}} v^{i-1}_{py} \sum_{k = 0}^{i-2} \eps^{\frac k 2} v^k_p. 
\end{align}
and for the case of $N_1 + 1$, we set 
\begin{align} \n
\mathcal{J}_{P}^{N_1 + 1} := & \eps^{\frac{N_1}{2}} v^{N_1}_{py} \sum_{k = 1}^{N_1} \eps^{\frac{k-1}{2}} v_E^k + \eps^{\frac{N_1}{2}} v^{N_1}_p \sum_{k = 1}^{N_1} \eps^{\frac k 2} v^k_{EY} + \eps^{\frac{N_1}{2}} v^{N_1}_p \sum_{k = 0}^{N_1} \eps^{\frac k 2} v^k_{py} \\
& + \eps^{\frac{N_1}{2}} v^{N_1}_{py} \sum_{k = 0}^{N_1 - 1} \eps^{\frac k 2} v^k_p. 
\end{align}

We also calculate 
\begin{align} \label{visc:1}
&\Delta_\eps U^\eps := \sum_{k = 0}^{N_1} \eps^{\frac k 2} \Delta_\eps u^k_p + \sum_{k = 0}^{N_1} \eps^{\frac k 2} \Delta u^k_E = \sum_{k = 0}^{N_1} \eps^{\frac k 2} \Delta_\eps u^k_p, \\ \label{visc:2}
&\Delta_\eps V^\eps := \sum_{k = 0}^{N_1} \eps^{\frac k 2} \Delta_\eps v^k_p + \sum_{k = 0}^{N_1} \eps^{\frac k 2} \Delta v^k_E = \sum_{k = 0}^{N_1} \eps^{\frac k 2} \Delta_\eps v^k_p
\end{align}
where above we have used that $\Delta u^k_E = 0$ and $\Delta v^k_E = 0$ according to \eqref{Eul:harm} below to simplify these expressions. 

We now consolidate these identities to obtain the expansion of the Navier-Stokes equations. Starting with equation \eqref{eq:NS:1}, we obtain 
\begin{align} \n
&U^\eps U^\eps_x + V^\eps U^\eps_y + P^\eps_x - \Delta_\eps U^\eps\\ \n
 = & (\bar{u}^0_p \bar{u}^0_{px} +  \bar{v}^0_p \bar{u}^0_{py} - \bar{u}^0_{pyy} + P^0_{px}) + \sum_{i = 1}^{N_1} \Big( \eps^{\frac i 2} \Big( \bar{u}^0_p \p_x u^i_p + \bar{u}^0_{px} u^i_p +  \bar{v}^0_p u^i_{py} + \bar{u}^0_{py} \bar{v}^i_p + P^{i}_{px} - u^i_{pyy} \Big) +  \mathcal{F}^{(i)}_p  + \mathcal{G}_p^{(i)} \Big)   \\ \n
&+ \sum_{i = 1}^{N_1} \Big( \eps^{\frac i 2} (\p_x u^i_E + \p_x P^i_E) + \mathcal{F}_E^{(i)} + \mathcal{G}_E^{(i)} \Big)    + \eps^{\frac{N_2}{2}} \Big( \bar{u} \p_x u  + u \p_x \bar{u} + \bar{v} \p_y u  + \p_y \bar{u} v + P_x - \Delta_\eps u\Big) \\  \label{exp:fully:done:1}
&+ \eps^{N_2}( uu_x + vu_y) + \mathcal{E}_{N_1}^{(1)} +  \mathcal{E}_{N_1}^{(2)} + \mathcal{F}_{E}^{N_1 + 1} +  \mathcal{G}_{E}^{(N_1+1)} - \sum_{i = 0}^{N_1} \eps^{1 + \frac i 2} u^{i}_{pxx} + \eps^{\frac{N_1 + 1}{2}} P^{N_1+1}_{px},
\end{align}
where we have inserted the pressure expansions from \eqref{exp:base}, as well as used identities \eqref{UUx}, \eqref{VVy}, and \eqref{visc:1}.

Next, we use identities \eqref{UVx}, \eqref{VVyreal}, and \eqref{visc:2} to obtain the following expansion of the equation \eqref{eq:NS:2}, 
\begin{align} \n
&U^\eps V^\eps_x + V^\eps V^\eps_y + \frac{P^\eps_y}{\eps} - \Delta_\eps V^\eps \\ \n
= & \sum_{i = 1}^{N_1} \eps^{\frac{i-1}{2}} (v^i_{Ex} + P^i_{EY} ) + \sum_{i = 1}^{N_1 + 1} (\mathcal{H}_E^{(i)} + \mathcal{J}_E^{(i)} ) + \sum_{i = 1}^{N_1 + 1}( \mathcal{H}_P^{(i)}+ \mathcal{J}_{P}^{(i)} )-  \sum_{i = 0}^{N_1} \eps^{\frac i 2} \Delta_\eps v^i_p \\ \n
&+   \sum_{i = 0}^{N_1+1} \eps^{\frac{i}{2} - 1} P^i_{py} + \eps^{\frac{N_2}{2}-1} P_y + \eps^{\frac{N_2}{2}} (\bar{u} v_x + \bar{v}_x u + \bar{v} v_y + \bar{v}_y v - \Delta_\eps v) \\ \label{exp:fully:done:2}
& + \eps^{\frac{N_2}{2}} (uv_x + vv_y). 
\end{align}

\subsection{Euler Layers}

Our outer Euler flow selection is 
\begin{align} \label{Eul:1:0}
[u^0_E, v^0_E, P^0_E] = [1, 0, 0]. 
\end{align}
We now consider the equations obtained by setting the first summand on the second line of \eqref{exp:fully:done:1} equal to zero and the first two summands on the first line of \eqref{exp:fully:done:2} equal to zero: 
\begin{align} \label{CR:pre}
&\p_x u^i_E + \p_x P^i_E = \eps^{- \frac i 2} ( \mathcal{F}_E^{(i)} + \mathcal{G}_E^{(i)} ), \\
& \p_x v^i_E + \p_Y P^i_E = \eps^{- \frac{i-1}{2}} (  \mathcal{H}_E^{(i)} + \mathcal{J}_E^{(i)}), \\
& \p_x u^i_E + \p_Y v^i_E = 0.  
\end{align}

We will make the following inductive hypothesis on the Euler layers, which will be verified to hold: we take each Euler layer to satisfy the Cauchy-Riemann equations, 
\begin{align} \label{induct:eul}
\p_x v^{j}_E = \p_Y u^j_E, \qquad \p_x u^j_E = - \p_Y v^j_E, \text{ for } j = 0,...,i-1.
\end{align}
It is clear that the leading order, \eqref{Eul:1:0}, verifies this hypothesis. 

We will decompose 
\begin{align}
P^i_E = \hat{P}^i_E + \mathring{P}^i_E, 
\end{align}
where $\hat{P}^i_E$ will be used to cancel the quadratic terms on the right-hand sides above. More specifically, we have the following lemma.
\begin{lemma} Define
\begin{align} \label{P:hat}
\hat{P}^i_E := \frac 1 2 \eps^{\frac{i-2}{2}} ( |u^{i-1}_E|^2 + |v^{i-1}_E|^2 )+ \sum_{j = 1}^{i-2} \eps^{\frac{j-1}{2}} (u_E^{i-1} u_E^j + v_E^{i-1} v^j_E)
\end{align}
Then, assuming \eqref{induct:eul}, the following identity holds
\begin{align}
\begin{pmatrix} \p_x \\ \p_Y \end{pmatrix} \hat{P}^i_E + \begin{pmatrix} \eps^{- \frac i 2} ( \mathcal{F}_E^{(i)} + \mathcal{G}_E^{(i)} ) \\ \eps^{- \frac{i-1}{2}}( \mathcal{H}_E^{(i)} + \mathcal{J}_E^{(i)}  ) \end{pmatrix} = 0
\end{align}
\end{lemma}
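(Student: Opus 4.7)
The plan is a direct computation: I would apply the product rule to $\hat P^i_E$ componentwise, use the inductive Cauchy--Riemann hypothesis \eqref{induct:eul} to convert every $Y$-derivative of an Euler quantity into an $x$-derivative via $u^k_{EY} = v^k_{Ex}$ and $v^k_{EY} = -u^k_{Ex}$, and then match term by term against the expanded forms of $\eps^{-i/2}(\mathcal{F}_E^{(i)}+\mathcal{G}_E^{(i)})$ and $\eps^{-(i-1)/2}(\mathcal{H}_E^{(i)}+\mathcal{J}_E^{(i)})$. No inductive control beyond \eqref{induct:eul} is required; the identity is a purely algebraic consequence of the ansatz \eqref{P:hat} and of the harmonicity that has been built into \eqref{induct:eul}.

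For the first component, I differentiate \eqref{P:hat} directly to write $\p_x \hat P^i_E$ as an $\eps^{(i-2)/2}$ leading piece $u^{i-1}_E u^{i-1}_{Ex} + v^{i-1}_E v^{i-1}_{Ex}$ plus the sum $\sum_{j=1}^{i-2}\eps^{(j-1)/2}(u^{i-1}_{Ex} u^j_E + u^{i-1}_E u^j_{Ex} + v^{i-1}_{Ex} v^j_E + v^{i-1}_E v^j_{Ex})$. The quantity $\eps^{-i/2}\mathcal{F}_E^{(i)}$ is already in this form, after distributing the explicit $\eps^{1/2}$-factors and peeling off the $j = i-1$ top-index term from its first inner sum. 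The quantity $\eps^{-i/2}\mathcal{G}_E^{(i)}$ becomes the corresponding $v$-quadratic expression once $u^k_{EY}$ is replaced by $v^k_{Ex}$ via \eqref{induct:eul}. Summing the two reproduces exactly the four families of monomials appearing in $\p_x\hat P^i_E$.

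The $\p_Y$ identity is analogous but requires \eqref{induct:eul} on both sides. Differentiating \eqref{P:hat} in $Y$ and applying CR gives the skew-symmetric leading piece $\eps^{(i-2)/2}(u^{i-1}_E v^{i-1}_{Ex} - v^{i-1}_E u^{i-1}_{Ex})$ together with a corresponding mixed-parity lower-order sum. On the right-hand side, $\mathcal{H}_E^{(i)}$ is already cast in $\p_x$-derivatives, while $\mathcal{J}_E^{(i)}$ needs $v^k_{EY} = -u^k_{Ex}$ applied term by term. The main---and essentially only---obstacle is the combinatorial bookkeeping: aligning the ranges that run to $i-1$ in some factors versus only to $i-2$ in others (the mismatch is precisely what produces the $\eps^{(i-2)/2}$ leading piece when the top index is peeled off), relabeling dummy indices $j$ and $k$ consistently between $\mathcal{F}_E^{(i)},\mathcal{G}_E^{(i)},\mathcal{H}_E^{(i)},\mathcal{J}_E^{(i)}$, and tracking the sign introduced by $v^k_{EY} = -u^k_{Ex}$. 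Once these substitutions are carried out, every quadratic monomial in $(u^k_E, v^k_E, u^k_{Ex}, v^k_{Ex})$ appears on one side with exactly the sign and $\eps^{1/2}$-weight needed to cancel its partner on the other, closing the identity.
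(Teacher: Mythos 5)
Your overall strategy---direct differentiation of $\hat P^i_E$, conversion of all $Y$-derivatives to $x$-derivatives via the Cauchy--Riemann relations \eqref{induct:eul}, peeling off the top-index $j=i-1$ term, and comparing quadratic monomials weight by weight in $\eps^{1/2}$---is precisely what the paper has in mind (the paper's own ``proof'' is the single sentence ``This follows from a direct computation''), and it does close the identity in essentially this way. That part of the proposal is sound and complete in spirit.

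However, there is a sign inconsistency in your writeup that you should not gloss over. You claim that $\eps^{-i/2}(\mathcal{F}_E^{(i)}+\mathcal{G}_E^{(i)})$ ``reproduces exactly the four families of monomials appearing in $\p_x\hat P^i_E$,'' i.e.\ that the forcing \emph{equals} $\p_x \hat P^i_E$; and indeed carrying out the differentiation with \eqref{P:hat} and \eqref{induct:eul} as written yields $\p_x\hat P^i_E = +\,\eps^{-i/2}(\mathcal{F}_E^{(i)}+\mathcal{G}_E^{(i)})$ and $\p_Y\hat P^i_E = +\,\eps^{-(i-1)/2}(\mathcal{H}_E^{(i)}+\mathcal{J}_E^{(i)})$. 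But the lemma's displayed identity asserts these quantities \emph{sum to zero}, and your final sentence switches to saying each monomial ``cancels its partner.'' Both cannot hold unless $\nabla\hat P^i_E\equiv 0$. What is actually happening is that the formula \eqref{P:hat} for the Bernoulli-type pressure corrector is missing an overall minus sign (one expects $\hat P^i_E \sim -\tfrac12|u_E|^2$), so that either \eqref{P:hat} or the $+$ in the lemma's display is a typo in the paper. You should have noticed that your term-by-term matching produces equality rather than cancellation, flagged the discrepancy, and resolved it (e.g.\ by replacing $\hat P^i_E$ with $-\hat P^i_E$). As written, your argument derives $A=B$ and then asserts $A+B=0$, which does not follow.
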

\begin{proof} This follows from a direct computation. 
\end{proof}

We then get to use $\mathring{P}^i_E$ to obtain the Cauchy-Riemann equations from \eqref{CR:pre}, via 
\begin{align}
\p_x u^i_E + \p_x \mathring{P}^i_E =0, \qquad \p_x v^i_E + \p_Y \mathring{P}^i_E = 0, \qquad \p_x u^i_E + \p_Y v^i_E = 0,
\end{align}
and therefore, setting $\mathring{P}^i_E = - u^i_E$, we get 
\begin{align} \label{Eul:CR}
\p_x v^i_E = \p_Y u^i_E, \qquad \p_x u^i_E = - \p_Y v^i_E \text{ on } \mathcal{Q} = (0, \infty) \times (0, \infty), 
\end{align}
which are the Cauchy-Riemann equations, and in particular which imply 
\begin{align} \label{Eul:harm}
\Delta u^i_E = 0, \qquad \Delta v^i_E =0.
\end{align}
These equations are supplemented with boundary conditions as well as compatibility conditions, which we now describe. By enforcing the Dirichlet condition $(v^0_p + v^1_E)|_{Y = 0} = 0$ at the leading order of \eqref{exp:base}, and similarly for subsequent orders, we get the boundary condition 
\begin{align}
v^i_E|_{Y = 0} = - v^{i-1}_p|_{x = 0}, 
\end{align}
where the quantity on the right-hand side above is inductively known. We also get to enforce a boundary condition at $Y = \infty$, which we again take to be the Dirichlet condition, $v^i_E|_{Y = \infty} = 0$. 

We now describe the class of datum $V^i_E(Y) := v^i_E|_{x = 0}$ that we will prescribe. Instead of prescribing $V^i_E(\cdot)$ directly, we choose to make the following definition: 
\begin{definition} \label{defn:compatible:harmonic} Define the function $V^i_E(\cdot)$ to satisfy the ``elliptic compatibility condition" if it can be realized in the following manner. Consider the function $v^{i-1}_p(x): \mathbb{R}_+ \rightarrow \mathbb{R}$. Consider any smooth extension of $v^{i-1}_p$ to $\mathbb{R}$, which we call $\tilde{v}^{i-1}_p(x)$, which we take to be compactly supported in the negative $x$ direction. Let $\tilde{v}^i_E$ be the harmonic extension of $\tilde{v}^{i-1}_p$ to $\mathbb{H}$. $V^i_E(\cdot)$ satisfies the ``elliptic compatibility condition" if it can be realized under this procedure, that is $V^i_E(Y) = \tilde{v}^i_E(0, Y)$, for some extension $\tilde{v}^i_E$. 
\end{definition}

That is, the freedom in prescribing the datum $V^i_E$ is not as explicit as selecting the function itself, but rather the freedom is in picking the extension $\tilde{v}^i_{p-1}$ (which simply needs to be smooth and rapidly decaying). 

We now discuss the construction of the Euler profiles, $(u^i_E, v^i_E)$ for $i = 1,..,N_1$. Recall from \eqref{Eul:CR} that our equations are
\begin{align} \label{Eul:CR:2}
\p_x v^i_E = \p_Y u^i_E, \qquad \p_x u^i_E = - \p_Y v^i_E \text{ on } \mathcal{Q} = (0, \infty) \times (0, \infty), 
\end{align}
supplemented with boundary conditions 
\begin{align} \label{BC}
v^i_E(x, 0) = - v^{i-1}_p(x, 0), \qquad v^i_E(x, \infty) = 0.
\end{align}
For these profiles, we have the following estimates
\begin{proposition} \label{prop:Euler} Solutions to \eqref{Eul:CR:2} - \eqref{BC} satisfy the following estimates 
\begin{align} \label{water:7}
&\| (Y \p_Y)^l \p_x^k \p_Y^j v^{i}_E \|_{L^\infty_Y} \lesssim \langle x \rangle^{- \frac 1 2 - k - j}, \\ \label{water:8}
&\|  (Y \p_Y)^l \p_x^k \p_Y^j u^{i}_E \|_{L^\infty_Y} \lesssim \langle x \rangle^{- \frac 1 2 - k - j}.
\end{align}
\end{proposition}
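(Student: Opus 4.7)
The construction in Definition \ref{defn:compatible:harmonic} represents $v^i_E$ as the Poisson extension to the upper half-plane of a function $\tilde g$ which equals $-v^{i-1}_p(\cdot, 0)$ on $[0,\infty)$ and is smoothly extended to be compactly supported on $(-\infty, 0)$. Thus
\begin{equation*}
v^i_E(x, Y) = (P_Y * \tilde g)(x) = \frac{Y}{\pi} \int_{\mathbb{R}} \frac{\tilde g(x')}{(x - x')^2 + Y^2} \ud x'.
\end{equation*}
Since this proposition is the final step in the inductive proof of Theorem \ref{thm:approx} at level $i$, the boundary trace $v^{i-1}_p(\cdot, 0)$ is already known from \eqref{water:5} (with $j = 0$) to satisfy $|\partial_x^k \tilde g(x)| \lesssim \langle x \rangle^{-3/4 - k + \sigma_\ast}$ for $x \ge 0$; the extension contributes only bounded, compactly supported terms.

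For the base bound $\|v^i_E(x,\cdot)\|_{L^\infty_Y} \lesssim \langle x \rangle^{-1/2}$ I would split the Poisson integral according to whether $|x' - x| \le x/2$ or $|x' - x| > x/2$, and further according to whether $Y \le \langle x \rangle$ or $Y > \langle x \rangle$. In the near zone $|x' - x| \le x/2$, the pointwise bound $|\tilde g(x')| \lesssim \langle x \rangle^{-3/4 + \sigma_\ast}$ together with $\int P_Y \le 1$ yields $\lesssim \langle x \rangle^{-3/4+\sigma_\ast}$. In the far zone with $Y \le \langle x \rangle$, the bound $P_Y(\xi) \lesssim Y/\xi^2 \lesssim Y/x^2$ combined with $\|\tilde g\|_{L^1([-x/2, x/2])} \lesssim \langle x \rangle^{1/4+\sigma_\ast}$ gives $\lesssim Y \langle x \rangle^{-7/4+\sigma_\ast} \lesssim \langle x \rangle^{-3/4+\sigma_\ast}$. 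In the far zone with $Y > \langle x \rangle$, the rescaling $x' = x - Yu$ turns the integral into $\pi^{-1}\int \tilde g(x - Yu) (u^2+1)^{-1} \ud u$, and the pointwise bound $|\tilde g(x - Yu)| \lesssim (Y|u|)^{-3/4+\sigma_\ast}$ away from $|u| \le Y^{-1}$ gives a bound of order $Y^{-3/4 + \sigma_\ast} \lesssim \langle x \rangle^{-3/4 + \sigma_\ast}$. The three regimes combine to give a uniform decay $\lesssim \langle x \rangle^{-3/4+\sigma_\ast}$, which is strictly stronger than the required $\langle x \rangle^{-1/2}$.

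For derivatives in $x$, I would commute through the convolution, $\partial_x^k v^i_E = P_Y * \partial_x^k \tilde g$, and re-apply the argument above with $\partial_x^k \tilde g$ in place of $\tilde g$, which produces decay $\langle x \rangle^{-3/4 - k + \sigma_\ast}$. Pure $Y$-derivatives of even order reduce to $x$-derivatives via harmonicity $\partial_Y^2 v^i_E = -\partial_x^2 v^i_E$; an odd $\partial_Y$ is absorbed by using the explicit form of $\partial_Y P_Y$ and repeating the domain splitting. The operator $(Y\partial_Y)^l$ is scale-invariant in $Y$: expanding $(Y \partial_Y)^l = \sum_{m \le l} c_{lm} Y^m \partial_Y^m$ and noting that $Y^m \partial_Y^m P_Y$ is bounded pointwise by a constant multiple of $P_Y$, the same $x$-decay bounds propagate with no worsening. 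Finally, $u^i_E$ is recovered from the Cauchy-Riemann relation \eqref{Eul:CR:2} through the formula $u^i_E(x, Y) = -\int_Y^\infty \partial_x v^i_E(x, Y') \ud Y'$, valid because of the far-field condition $u^i_E(x, \infty) = 0$ inherited from the matching with the leading outer flow $[1,0]$; the bounds on $u^i_E$ and its derivatives then follow by term-by-term integration of the corresponding bounds on $\partial_x v^i_E$.

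The principal obstacle is that $\tilde g \notin L^1(\mathbb{R})$ since $\langle x \rangle^{-3/4 + \sigma_\ast}$ fails to be integrable at infinity, so the standard Young-type bound $\|P_Y * \tilde g\|_{L^\infty} \lesssim Y^{-1}\|\tilde g\|_{L^1}$ is unavailable and the required $x$-decay must instead be extracted through the careful partition of the Poisson integral described above. Once the base bound is in hand, the derivative estimates are routine consequences of harmonicity and the explicit Poisson formula, matching the author's remark that \eqref{water:78}--\eqref{water:88} are straightforward consequences of the harmonicity in \eqref{Eul:harm}.
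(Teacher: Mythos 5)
The paper itself gives no proof here---it simply cites Lemma~3.4 and Proposition~3.6 of \cite{Iyer2b}---so your Poisson-kernel argument is an attempted reconstruction rather than an alternative to a visible paper proof. The representation of $v^i_E$ via the Poisson integral, and the resulting base bound $|v^i_E|\lesssim\langle x\rangle^{-3/4+\sigma_\ast}$ with the three-way split, is essentially sound (modulo not explicitly addressing the piece $x'>3x/2$ of the far zone, which is harmless since $P_Y(x-x')$ decays like $(x'-x)^{-2}$ there). However, the derivative estimates as written have a genuine gap, and it is precisely where the $x$-decay in the proposition comes from.

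The problem is in ``commute through the convolution, $\partial_x^k v^i_E=P_Y*\partial_x^k\tilde g$, and re-apply the argument.'' For $k\ge 1$ the function $\partial_x^k\tilde g$ is integrable, $\|\partial_x^k\tilde g\|_{L^1(-C,x/2)}\lesssim 1$ rather than $\lesssim\langle x\rangle^{1/4+\sigma_\ast}$; repeating your far-zone bound $P_Y(\xi)\lesssim Y/x^2$ then gives only $\lesssim Y/x^2\lesssim 1/x$ once you take the $L^\infty_Y$ supremum up to $Y\sim x$. This is \emph{strictly weaker} than the required $\langle x\rangle^{-1/2-k}\le\langle x\rangle^{-3/2}$, so the argument does not close. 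The fix is to leave the derivative on the kernel instead of the boundary data: $\partial_x^k v^i_E=(\partial_x^k P_Y)*\tilde g$, use $|\partial_x^k P_Y(\xi)|\lesssim Y/(\xi^2+Y^2)^{(k+2)/2}\lesssim Y/|x|^{k+2}$ in the far zone (and $P_Y*\partial_x^k\tilde g$ only in the near zone where the kernel is integrable uniformly in $Y$), which restores the claimed $\langle x\rangle^{-3/4-k+\sigma_\ast}$. A similar cancellation (using $\int\partial_Y P_Y\,d\xi=0$, or the harmonicity identity $\partial_Y v=\int_Y^\infty\partial_x^2 v\,dY'$) is needed for the odd $\partial_Y$ case, which you gloss over: $\|\partial_Y P_Y\|_{L^1_\xi}\sim 1/Y$ is \emph{not} uniformly bounded as $Y\downarrow 0$, so ``repeating the domain splitting'' does not literally work.

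A second, related gap is the step $u^i_E(x,Y)=-\int_Y^\infty\partial_x v^i_E(x,Y')\,dY'$. With only the $L^\infty_Y$ bound $|\partial_x v^i_E|\lesssim\langle x\rangle^{-7/4+\sigma_\ast}$, the integral is divergent. To integrate in $Y'$ you need a \emph{joint} decay such as $|\partial_x v^i_E(x,Y')|\lesssim(\langle x\rangle+Y')^{-7/4+\sigma_\ast}$, which the Poisson argument does produce (your ``far zone with $Y>\langle x\rangle$'' computation is the germ of it) but which you never record, instead reporting only the $L^\infty_Y$ supremum. Tracking this bivariate decay is exactly what makes the term-by-term integration for $u^i_E$, and the derivation of the $Y$-derivative bounds through the harmonicity identity, legitimate.
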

\begin{proof} This is a minor adaptation of Lemma 3.4 and Proposition 3.6 in \cite{Iyer2b}.
\end{proof}

\subsection{Prandtl Layers}

\subsubsection{Leading Order Prandtl}

To find the leading order boundary layer, we set the term in the first parenthesis of \eqref{exp:fully:done:1} equal to zero, as well as the pressure term $P^0_{py}$ from \eqref{exp:fully:done:2}, which gives 
\begin{align} \label{BL:0}
&\bar{u}^0_p \p_x \bar{u}^0_p + \bar{v} \p_y \bar{u}^0_p - \p_y^{2} \bar{u}^0_p + P^{i}_{px} = 0, \qquad P^i_{py} = 0, \qquad \p_x \bar{u}^0_p + \p_y \bar{v}^0_p = 0,  
\end{align}
which are supplemented with the boundary conditions
\begin{align} \label{BL:1}
&\bar{u}^0_p|_{x = 0} = \bar{U}^0_p(y), \qquad \bar{u}^0_p|_{y = 0} = 0, \qquad \bar{u}^0_p|_{y = \infty} = 1, \qquad \bar{v}^0_p = - \int_0^y \p_x \bar{u}^0_p. 
\end{align}

Recall now the Blasius profiles, defined in \eqref{Blasius:1} - \eqref{Blasius:3}. Recall also that, without loss of generality, we set $x_0 = 1$. We now record the following quantitative estimates on the Blasius solution, which are well-known and follow immediately from the self-similarity of the profile:
\begin{lemma} For any $k, j, M \ge 0$, 
\begin{align} \label{water:1}
&\| \langle z \rangle^M \p_x^k \p_y^j (\bar{u}_\ast - 1) \|_{L^\infty_y} \le  C_{M, k, j} \langle x \rangle^{- k - \frac j 2}, \\ \label{v:blasius}
&\|  \langle z \rangle^M \p_x^k \p_y^j (\bar{v}_\ast - \bar{v}_\ast(x, \infty)) \|_{L^\infty_y} \le C_{M, k, j} \langle x \rangle^{ - \frac 1 2- k - \frac j 2}, \\ \label{v:blasius:2}
&\| \p_x^k \p_y^j \bar{v}_\ast \|_{L^\infty_y} \le C_{k,j} \langle x \rangle^{- \frac 1 2 -  k - \frac j 2}.
\end{align}
\end{lemma}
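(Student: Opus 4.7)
The plan is to exploit self-similarity directly. With the convention $x_0 = 1$, one has $\bar{u}_\ast(x,y) = f'(z)$ and $\bar{v}_\ast(x,y) = \langle x \rangle^{-1/2}\, h(z)$, where $z = y/\sqrt{\langle x \rangle}$ and $h(z) := z f'(z) - f(z)$. The chain rule gives $\p_y z = \langle x \rangle^{-1/2}$ and $\p_x z = -z/(2\langle x \rangle)$, so an induction on $k+j$ shows that for any smooth $g = g(z)$,
\begin{align*}
\p_x^k \p_y^j g(z) = \langle x \rangle^{-k - j/2}\, R_{k,j}[g](z),
\end{align*}
where $R_{k,j}[g]$ is a finite $\mathbb{R}$-linear combination of terms $z^a g^{(b)}(z)$ with $b \ge 1$ whenever $k+j \ge 1$ (and in fact $b \ge j$). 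This already fixes the $\langle x \rangle$-scaling in all three bounds; what remains is a statement about the decay of $f$ and its derivatives in $z$.

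The key analytic input is the classical asymptotic analysis of the Blasius ODE $f''' + f f'' = 0$ with the boundary conditions in \eqref{Blasius:3}. Standard ODE arguments produce a positive displacement constant $\alpha$ (the Blasius constant) such that
\begin{align*}
f(z) = z - \alpha + r(z), \qquad |\p_z^n r(z)| + |f^{(n+2)}(z)| \lesssim_n e^{-z^2/4}, \quad n \ge 0,
\end{align*}
uniformly for $z \ge 0$. Heuristically, once $f(z) \approx z$ for large $z$, the equation becomes $f^{(n+2)} \approx - z f^{(n+1)}$, which yields Gaussian decay of all sufficiently high derivatives. This is classical (see e.g.\ \cite{Schlicting}), and, granted it, the rest of the proof is bookkeeping; it is also the only substantive step, so I would regard it as the main obstacle even though it is entirely standard.

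With these asymptotics in hand, the three estimates follow by inspection. For \eqref{water:1}: in the base case $(k,j)=(0,0)$, $\bar{u}_\ast - 1 = f'(z) - 1 = -\int_z^\infty f''(w)\,dw$ has Gaussian decay; when $k+j \ge 1$, the representation above produces only factors $f^{(b)}$ with $b \ge 2$, all of which enjoy Gaussian decay and thus absorb the polynomial weight $\langle z \rangle^M$ for any $M$. For \eqref{v:blasius}: since $h(\infty) = \alpha$, one has
\begin{align*}
\bar{v}_\ast - \bar{v}_\ast(x,\infty) = \langle x \rangle^{-1/2} \bigl( z (f'(z) - 1) - r(z) \bigr),
\end{align*}
and both summands have Gaussian decay in $z$ (as do all their derivatives), so the claim follows from the chain-rule calculation above. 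For \eqref{v:blasius:2}: no $z$-decay is needed; here we only use that $h(z)$ is uniformly bounded on $[0,\infty)$ (by $\alpha + \sup_z z|f'(z)-1|$, both finite) and that $h^{(n)}(z) = z f^{(n+1)}(z) + n f^{(n)}(z)$ is uniformly bounded for $n \ge 1$. Combining with the chain rule and the $\p_x \langle x \rangle^{-1/2}$ bookkeeping (which contributes the additional $\langle x \rangle^{-1/2}$ out in front, matched with one extra $\langle x \rangle^{-1}$ per $x$-derivative) yields exactly the claimed scaling $\langle x \rangle^{-1/2 - k - j/2}$.
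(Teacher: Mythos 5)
Your proposal is correct and takes exactly the route the paper implicitly invokes: the paper simply asserts these estimates ``are well-known and follow immediately from the self-similarity of the profile,'' and your chain-rule bookkeeping combined with the classical Gaussian decay of $f'',f''',\dots$ supplies precisely the missing details. One tiny slip: from $h(z)=zf'(z)-f(z)$ one gets $h^{(n)}(z)=zf^{(n+1)}(z)+(n-1)f^{(n)}(z)$, not $+\,n f^{(n)}(z)$, but this is inconsequential since every term is still uniformly bounded.
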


We also have the following properties of the Blasius profile, which are well known and which will be used in our analysis.
\begin{lemma} For $[\bar{u}_\ast, \bar{v}_\ast]$ defined in \eqref{Blasius:1}, the following estimates are valid
\begin{align} \label{Blas:prop:1}
&|\p_y \bar{u}_\ast(x, 0)| \gtrsim \langle x \rangle^{- \frac 1 2}, \\  \label{Blas:prop:2}
&\p_{yy}\bar{u}_{\ast} \le 0. 
\end{align}
\end{lemma}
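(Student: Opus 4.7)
The plan is to reduce both estimates to classical scalar facts about the Blasius function $f$ defined by the ODE in \eqref{Blasius:3}. Writing $\bar{u}_\ast(x,y)=f'(z)$ with $z=y/\sqrt{x+1}$, a direct computation gives
\begin{align*}
\p_y\bar{u}_\ast(x,0)=\frac{f''(0)}{\sqrt{x+1}},\qquad \p_{yy}\bar{u}_\ast(x,y)=\frac{f'''(z)}{x+1}=-\frac{f(z)f''(z)}{x+1},
\end{align*}
where in the second equality I used the ODE $f'''=-ff''$. Thus both \eqref{Blas:prop:1} and \eqref{Blas:prop:2} reduce to showing that $f''(0)>0$ and that $f(z)f''(z)\ge 0$ for every $z\ge 0$.

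I would obtain these by integrating the ODE explicitly. The identity $f'''=-ff''$ is equivalent to $(\log f'')'=-f$, whence
\begin{align*}
f''(z)=f''(0)\exp\!\Bigl(-\int_0^z f(w)\,\mathrm{d}w\Bigr).
\end{align*}
In particular $f''$ never changes sign on $[0,\infty)$, and the sign is determined by $\alpha:=f''(0)$. The boundary condition $f'(0)=0$ together with $f'(\infty)=1$ forces $f'$ to increase from $0$ to $1$ at some point, so $f''$ must be positive somewhere; combined with its constant sign, this yields $f''>0$ on $[0,\infty)$, and in particular $\alpha>0$. This immediately establishes \eqref{Blas:prop:1} with the constant $\alpha$.

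For \eqref{Blas:prop:2}, I recall from the derivation of \eqref{Blasius:1} that the no-penetration condition $\bar{v}_\ast|_{y=0}=0$ forces $f(0)=0$ (since $\bar{v}_\ast(x,0)=-f(0)/\sqrt{x+1}$). Combined with $f'(0)=0$ and the just-proved positivity $f''>0$, a Taylor expansion at the origin gives $f(z)=\tfrac{\alpha}{2}z^2+o(z^2)$ near $z=0$, so $f(z)>0$ for small $z>0$; moreover, the convexity $f''>0$ on $[0,\infty)$ combined with $f'(0)=0$ yields $f'>0$ on $(0,\infty)$, hence $f(z)=\int_0^z f'(w)\,\mathrm{d}w>0$ for every $z>0$. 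Therefore $f(z)f''(z)\ge 0$ on $[0,\infty)$, and so $\p_{yy}\bar{u}_\ast=-f(z)f''(z)/(x+1)\le 0$.

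There is no real obstacle here beyond justifying the two positivity facts; the mildly delicate step is ensuring that $f$ with the prescribed boundary data actually satisfies $f(0)=0$, which is not stated as part of \eqref{Blasius:3} but is implicit in the physical derivation of \eqref{Blasius:1} via $\bar{v}_\ast|_{y=0}=0$. Once this is in place, the integrating-factor representation of $f''$ collapses both claims to one-line consequences. Alternatively, if preferred, one could simply cite the classical qualitative analysis of the Blasius ODE (e.g.\ \cite{Schlicting}) which provides $\alpha>0$ and the monotonicity and convexity properties used above.
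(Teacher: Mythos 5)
Your proof is correct, and it is noteworthy that the paper provides no argument for this lemma at all: it merely asserts these as ``well known'' properties of the Blasius profile (presumably appealing to references such as \cite{Schlicting}). Your argument is the standard, self-contained one, and it is sound throughout. A few observations worth recording.

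First, the computation $\p_y\bar{u}_\ast(x,0)=f''(0)/\sqrt{x+1}$ and $\p_{yy}\bar{u}_\ast=f'''(z)/(x+1)=-f(z)f''(z)/(x+1)$ is exactly right, and the integrating-factor representation $f''(z)=f''(0)\exp(-\int_0^z f)$ — which is just solving the linear first-order ODE $g'=-fg$ for $g=f''$ — is the cleanest way to see that $f''$ either vanishes identically or nowhere. Your observation that $f'$ must actually increase from $0$ to $1$ forces $f''$ positive somewhere, hence everywhere, hence $\alpha:=f''(0)>0$, is precisely the needed sign information and yields \eqref{Blas:prop:1} with the explicit constant $\alpha$.

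Second, you are right to flag that $f(0)=0$ is not stated among the conditions in \eqref{Blasius:3} but is needed (and is indeed implicit: it is what makes $\bar{v}_\ast|_{y=0}=0$, as required by \eqref{BL:1:intro:intro}). As written, \eqref{Blasius:3} lists $f'(0)=0$, $f'(\infty)=1$, and $f(z)/z\to 1$, and the last of these does not by itself fix the constant of integration $f(0)$; the boundary value problem for a third-order ODE needs three conditions, and the conventional choice is $f(0)=f'(0)=0$, $f'(\infty)=1$. Supplying $f(0)=0$ as you do is not filling a gap in your own argument but in the paper's stated normalization. With $f(0)=f'(0)=0$ and $f''>0$, the chain $f'>0\Rightarrow f>0$ on $(0,\infty)$ follows, and then $\p_{yy}\bar{u}_\ast=-ff''/(x+1)\le 0$, which is \eqref{Blas:prop:2}. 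Your proof is complete.
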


\subsubsection{$i = 1,...,N_1$ Prandtl}
We collect the $\eps^{\frac i 2}$ order boundary layer contribution in the expansions \eqref{UUx} and \eqref{VVy}, which gives the linearized Prandtl system. We define for $i = 1, ..., N_1$, 
\begin{align} \label{def:PPi}
P^{(i)}_{p} := - \eps \int_y^\infty \Big( \mathcal{H}_p^{(i)} + \mathcal{J}_p^{(i)} - \eps^{\frac{i-1}{2}} \Delta_\eps v^{i-1}_p \Big) \ud y'
\end{align}
We subsequently set for $i = 1,..., N_1$, 
\begin{align} \label{Fpi}
F^{(i)}_p := - \eps^{- \frac i 2} \Big( \mathcal{F}_p^{(i)} + \mathcal{G}_p^{(i)}  +  \p_x P^{(i)}_P - \eps \eps^{\frac{i-1}{2}} u^{i-1}_{pxx}  \Big),
\end{align}
and correspondingly the equations
\begin{align} \label{rustic:1}
&\bar{u}^0_p \p_x u^i_p + \bar{u}^0_{px} u^i_p + \bar{v}^0_p \p_y u^i_p + \bar{v}^i_p \p_y \bar{u}^0_p - \p_y^2 u^i_p = F^{(i)}_p, \\ \label{rustic:2}
&v^i_p =  \int_y^\infty \p_x u^i_p, \qquad \bar{v}^i_p = v^i_p - v^i_p|_{y = 0}.
\end{align}
These equations are supplemented with the boundary conditions 
\begin{align} \label{rustic:3}
u^i_p|_{y = 0} = - u^i_E|_{y = 0}, \qquad u^i_p|_{y \rightarrow \infty} = 0, \qquad u^i_p|_{x = 0} = U^i_p(y). 
\end{align}

We now define our notion of parabolic compatibility condition. The ``zeroeth order parabolic compatibility condition" that we demand is simply that the initial data matches the boundary data at $(0, 0)$, that is 
\begin{align} \label{compat:0}
U^i_p(0) = - u^i_E(0, 0). 
\end{align}

To now motivate the definition of higher order compatibility, suppose we seek the initial datum for $\p_x u^i_p|_{x = 0}$. We evaluate the equation \eqref{rustic:1} at $x = 0$ to obtain 
\begin{align}
\bar{u}^0_p \p_x u^i_p|_{x= 0} + \bar{u}^0_{py} \bar{v}^i_p|_{x = 0} = F^{(i)}_p - (\bar{u}^0_{px} U^i_p + \bar{v}^0_p \p_y U^i_p- \p_y^2 U^i_p).
\end{align}
We now rewrite the left-hand side by using the divergence-free condition as 
\begin{align}
-|\bar{u}^0_p|^2 \p_y (\frac{\bar{v}^i_p(0, y)}{\bar{u}^0_p}) =  F^{(i)}_p - (\bar{u}^0_{px} U^i_p + \bar{v}^0_p \p_y U^i_p- \p_y^2 U^i_p),
\end{align}
which upon solving for $\bar{v}^i_p(0, y)$ yields 
\begin{align}
\bar{v}^i_p(0, y) = - \bar{u}^0_p \frac{u^i_{px}(0, 0)}{\bar{u}^0_{py}(0, 0)} + \bar{u}^0_p \int_0^y \frac{F^{(i)}_p - (\bar{u}^0_{px} U^i_p + \bar{v}^0_p \p_y U^i_p- \p_y^2 U^i_p)}{|\bar{u}^0_p|^2},
\end{align}
and taking one $\p_y$ then yields 
\begin{align} \n
u^i_{px}(0, y) =& \frac{ \bar{u}^0_{py}(0,y)}{\bar{u}^0_{py}(0, 0)} u^i_{px}(0, 0) - \frac{F^{(i)}_p(0, y) - (\bar{u}^0_{px} U^i_p + \bar{v}^0_p \p_y U^i_p- \p_y^2 U^i_p)}{\bar{u}^0_p} \\ \n
&- \bar{u}^0_{py}(0, y) \int_0^y \frac{F^{(i)}_p(0, y) - (\bar{u}^0_{px} U^i_p + \bar{v}^0_p \p_y U^i_p- \p_y^2 U^i_p)}{|\bar{u}^0_p|^2}
\end{align}
\begin{align}
\n
\phantom{u^i_{px}(0, y)}
= &- \frac{ \bar{u}^0_{py}(0,y)}{\bar{u}^0_{py}(0, 0)} u^{i}_{Ex}(0, 0)- \frac{F^{(i)}_p(0, y) - (\bar{u}^0_{px} U^i_p + \bar{v}^0_p \p_y U^i_p- \p_y^2 U^i_p)}{\bar{u}^0_p} \\ \label{gconvoL}
&- \bar{u}^0_{py}(0, y) \int_0^y \frac{F^{(i)}_p(0, y) - (\bar{u}^0_{px} U^i_p + \bar{v}^0_p \p_y U^i_p- \p_y^2 U^i_p)}{|\bar{u}^0_p|^2}.
\end{align}
From the above, the compatibility condition we need to enforce is then 
\begin{align} \label{comp:2}
\Big(\frac{F^{(i)}_p(0, \cdot) - (\bar{u}^0_{px} U^i_p + \bar{v}^0_p \p_y U^i_p- \p_y^2 U^i_p)}{\bar{u}^0_p} - \bar{u}^0_{py}(0, \cdot) \int_0^y \frac{F^{(i)}_p(0, \cdot) - (\bar{u}^0_{px} U^i_p + \bar{v}^0_p \p_y U^i_p- \p_y^2 U^i_p)}{|\bar{u}^0_p|^2} \Big)_{y = 0} = 0, 
\end{align}
which is a condition on up to the second derivatives of $U^i_p(y)$ at $y = 0$. 

\begin{definition} \label{para:compat:def} $U^i_p(\cdot)$ satisfies the ``zeroeth order parabolic compatibility condition" if \eqref{compat:0} is valid, and the ``first order parabolic compatibility condition" if \eqref{comp:2} is valid. Higher order compatibility conditions are derived in the same manner, although we omit writing the precise formula for these higher order conditions.  
\end{definition}

\noindent \textbf{Acknowledgements:} S.I is grateful for the hospitality and inspiring work atmosphere at NYU Abu Dhabi, where this work was initiated. The work of S.I is partially supported by NSF grant DMS-1802940. The work of N.M. is supported by NSF grant DMS-1716466 and by Tamkeen under the NYU Abu Dhabi Research Institute grant
of the center SITE.  


\def\bibindent{3.5em}

\end{document}